\theoremstyle{plain}
\newtheorem{theorem}{Theorem}[section]
\newtheorem{proposition}[theorem]{Proposition}
\newtheorem{lemma}[theorem]{Lemma}
\newtheorem{corollary}[theorem]{Corollary}
\theoremstyle{definition}
\newtheorem{definition}[theorem]{Definition}
\newtheorem{remark}[theorem]{Remark}
\newtheorem{example}[theorem]{Example}
\newcommand{\bb}{\mathbb}
\newcommand{\mcal}{\mathcal}
\newcommand{\CC}{{\mathbb{C}}}
\newcommand{\NN}{{\mathbb{N}}}
\newcommand{\PP}{{\mathbb{P}}}
\newcommand{\ZZ}{{\mathbb{Z}}}
\newcommand{\calE}{{\mathcal{E}}}
\newcommand{\calF}{{\mathcal{F}}}
\newcommand{\calG}{{\mathcal{G}}}
\newcommand{\calI}{{\mathcal{I}}}
\newcommand{\calO}{{\mathcal{O}}}
\newcommand{\calS}{{\mathcal{S}}}
\newcommand{\calM}{{\mathcal{M}}}
\newcommand{\bdd}{\mathrm{bdd}}
\newcommand{\cur}{{\sqrt{-1}\Theta}}
\newcommand{\Id}{{\textup{Id}}}
\newcommand{\supp}{{\textup{Supp}\ }}
\newcommand{\loc}{{\textup{loc}}}
\newcommand{\psh}{{\textup{Psh}}}
\newcommand{\rk}{\mathrm{rank}}
\newcommand{\FS}{\mathrm{FS}}
\newcommand{\Grif}{{\textup{Grif}}}
\newcommand{\Nak}{{\textup{Nak}}}
\newcommand{\ndim}{\mathrm{nd}}
 \def \Grif{  { \rm Grif}  }
 \def \Nak{  { \rm Nak}  }
 \def \SNak{  { \rm SNak}  }
 \def \P{  \mathbb{P}  }
 \def \dpa{ \overline{\partial}  }
 \def \pa{  \partial   }
\newcommand{\eps}{\varepsilon}
\renewcommand{\bar}[1]{\overline{#1}}
\renewcommand{\tilde}[1]{\widetilde{#1}}
\newcommand{\ddbar}{\sqrt{-1}\partial\bar{\partial}}
\title[\textit{L\MakeLowercase{iu}, L\MakeLowercase{iu}, Y\MakeLowercase{ang and} Z\MakeLowercase{hou}}, A L\MakeLowercase{e} P\MakeLowercase{otier-type isomorphism}]{
A Le Potier-type isomorphism twisted with \\ multiplier submodule sheaves}
\author[]{Yaxiong Liu, Zhuo Liu, Hui Yang and Xiangyu Zhou}
\date{}
\begin{document}
\maketitle\thispagestyle{empty}

\begin{abstract}

 In this paper, we obtain a Le Potier-type isomorphism theorem twisted with multiplier submodule sheaves, which relates a holomorphic vector bundle endowed with a strongly Nakano semipositive singular Hermitian metric to the tautological line bundle with the induced metric.
	As applications, we obtain a Koll\'ar-type injectivity theorem, a Nadel-type vanishing theorem, and a singular holomorphic Morse inequality for holomorphic vector bundles and so on.
\end{abstract}
\makeatletter\def\Hy@Warning#1{}\makeatother
\footnotetext{
2020 Mathematics Subject Classification: Primary 32L10; Secondary  32J25, 32L15, 32U05.

Keywords: Singular Hermitian metrics; Multiplier submodule sheaves; Strongly Nakano positivity; Holomorphic vector bundles; Cohomology groups.}

\section{Introduction}

		Let $E$ be a holomorphic vector bundle of rank $r$ over a complex manifold $X$.
		Consider the projectivized bundle $\mathbb{P}(E^*)$ of the dual bundle $E^*$ and its tautological line bundle $L_E:=\mathcal{O}_{\PP(E^*)}(1)$.
		Let $\pi$ be the induced projection from $\mathbb{P}(E^*)$ to $X$.
		It is well-known that $\pi_* L_E^m=S^mE$ for $ m\ge1$ where $S^mE$ is the $m$-th symmetric tensor power of $E$. On the one hand, Kobayashi-Ochiai \cite{KO70} obtained isomorphisms
		\begin{align}\label{formula K/X}
		K_{\mathbb{P}(E^*)/X}=(L_E)^{-r}\otimes \pi^*\det E
		\end{align}
  and \[H^q(X,W\otimes S^mE)=H^q(\mathbb{P}(E^*),\pi^*W\otimes L_E^m),\]
  where $W$ is a holomorphic vector bundle over $X$.
On the other hand, via using the spectral sequence, Le Potier \cite{lepotier73} obtained the following isomorphism theorem which establishes a connection between the cohomology of vector bundles and that of line bundles:
		\[H^q(X,\Omega_X^p\otimes E)=H^q(\mathbb{P}(E^*),\Omega_{\mathbb{P}(E^*)}^p\otimes L_E).\]
By the way, Schneider \cite{Sch74} gave a simpler proof of Le Potier's isomorphism by K\"unneth's formula and Bott's vanishing.

		In this paper, we contribute to establishing a generalization of the above  isomorphism theorem twisted with \textit{multiplier ideal $($submodule$)$ sheaves} associated to \textit{singular Hermitian metrics}.
	
		Recall that a singular Hermitian metric $h$ on $E$  is a measurable map from $X$ to the space of non-negative Hermitian forms on the fibers  satisfying $0<\det h<+\infty$ almost everywhere.
		The metric $h$
		is called \textit{Griffiths semi-positive}, denoted  by $(E, h)\ge_\Grif 0 $, if $|u|^2_{h^*}$ is plurisubharmonic for any local holomorphic section $u$ of $E^*$.
		And  $h$ is called  Griffiths positive, denoted  by $(E, h)>_\Grif 0 $, if locally there exists a smooth strictly plurisubharmonic function $\psi$ such that $he^{\psi}$ is  Griffiths semi-positive.
		For any  singular Hermitian metric $h$ on $E$,
		it induces a singular Hermitian metric $h_L$ on $L_E$ by the quotient morphism $\pi^*E\to L_E$ .
		Then it follows from Proposition \ref{pro posi} that the $(L_E,h_L)\ge_\Grif 0$ as soon as $(E,h)\ge_\Grif 0$.
		
		As a tool for characterizing the singularities of Griffiths semi-positive singular Hermitian metrics on holomorphic line bundles, the multiplier ideal  sheaf has played an important role in several complex variables and complex algebraic geometry.  In \cite{Cataldo98}, M. A. de Cataldo  defined
		the multiplier submodule sheaf $\calE(h)$ of $\mathcal{O}(E)$ associated to a singular Hermitian metric $h$  on a holomorphic vector bundle $E$ as follows:
		\begin{equation*}
		\calE(h)_x:=\{u\in E_x:|u|^2_{h} \mathrm{\ is \ integrable\ in\ some\ neighborhood\ of }\ x \}.
		\end{equation*}
		When $E$ is a holomorphic line bundle, $\calE(h)$ is simply $\mathcal{O}(E)\otimes \calI(h)$, where  $\calI(h)$ is the multiplier ideal sheaf associated to $h$.
		
		It's well-known that
		the multiplier ideal sheaf associated to a pseudo-effective line bundle satisfies some basic properties: coherence, torsion-freeness, Nadel vanishing theorem and so on.
		Nevertheless, it remains uncertain whether Griffiths semi-positivity implies the coherence of multiplier submodule sheaves. In more recent developments, Inayama \cite{In22} has showed that in cases where the unbounded locus of $\det h$ is isolated, the Griffiths semi-positivity of $(E,h)$ indeed leads to the coherence of $\calE(h)$. Similarly, Zou \cite{Zou22} has demonstrated that the coherence of $\calE(h)$ is a consequence of the Griffiths semi-positivity of $(E,h)$ when $\det h$ have analytic singularities.
		
To derive further insights into multiplier submodule sheaves, we must delve into notions of positivity that surpass Griffiths positivity.
In  \cite{Wu20}, Wu introduced a new positivity for singular Hermitian metrics on holomorphic vector bundles and showed that this positivity is stronger than Nakano positivity for smooth Hermitian metrics.

\begin{definition}
			\label{SNak}
			We say that a singular Hermitian metric $h$ is  \textit{strongly Nakano semi-positive}, denoted by $(E,h)\ge_\SNak 0 $, if $(E,h)\ge_\Grif 0$ and
			\[(L_E^{r+1}\otimes\pi^*\det E^*,h_L^{r+1}\otimes\pi^*\det h^*)\ge_\Grif 0.\]
			Moreover, we say that a singular Hermitian metric $h$ is  \textit{strongly Nakano positive}, denoted by $(E,h)>_\SNak 0 $, if locally there exists a smooth strictly plurisubharmonic function $\psi$ such that $(E,he^\psi)\ge_\SNak 0$.
\end{definition}		
\begin{remark}
        \begin{enumerate}[$(\rm i)$]
            \item Our definition may appear slightly different from Wu's article, yet it aligns as long as we presume prior Griffiths semi-positivity of $h$.
            \item Let $M$ be a holomorphic line bundle over $X$ endowed with a singular Hermitian metric $h_M$. By abuse of notation, we always  denote by
            \[(E,h)\geq_\SNak \sqrt{-1}\Theta_{(M,h_M)}\] if $(E\otimes M^*, h\otimes h_M^*)\geq_\Grif 0$ and 	
            \[(L_E^{r+1}\otimes\pi^*(\det E^*\otimes M^*),h_L^{r+1}\otimes\pi^*(\det h^*\otimes h_M^*))\ge_\Grif 0.\]
            \item It is easy to check that $(E, h)\ge_\Grif 0$ (resp. $>_\Grif 0$) implies  $(E\otimes \det E, h\otimes \det h)\ge_\SNak 0$ (resp. $>_\SNak 0$) and  strongly Nakano positivity is corresponding to Griffiths positivity when $E$ is of rank one.
            \item	Notice that when a smooth Hermitian metric $h$ is strongly Nakano (semi-)positive, it gives rise to a smooth and (semi-)positive metric $h_F := h_L^{m+r} \otimes \pi^*(\det h^*)$ on $\tilde{F}:= K_{\mathbb{P}(E^*)/X}^* \otimes L_E^m$, where $h_L$ is the metric induced by $h$ on $L_E$. Remarkably, Berndtsson \cite{Bern09} showed that the $L^2$-metric induced by $h_F$ (initially introduced by Narasimhan-Simha \cite{NS68}) on $\pi_*(K_{\mathbb{P}(E^*)/X}\otimes F)=S^mE$ is Nakano (semi-)positive. Additionally, Liu-Sun-Yang \cite[Theorem 7.1]{LSY13} showed that the $L^2$-metric is just a constant multiple of $S^mh$. Thus strong Nakano positivity implies Nakano positivity for smooth Hermitian metrics.
            \item We will show that strong Nakano positivity implies Nakano positivity too for singular Hermitian metrics (See Proposition \ref{thm strong implies NP}).		
         \end{enumerate}
\end{remark}

Wu proposed \cite[Problem 5.1]{Wu20} on the potential equivalence between Nakano positivity and strongly Nakano positivity. We consider the decomposable vector bundle of rank two over Riemann surface, whose dual projectivized bundle is the so-called \textit{Hirzebruch-like ruled surfaces} and prove that it is Nakano (semi-)positive, but not \textit{strongly} Nakano (semi-)positive (see Example \ref{nak not snak}). This gives a negative answer to Wu's problem.

		With these notions, we obtain a Le Potier-type isomorphism theorem for holomorphic vector bundles with strongly Nakano semi-positive singular Hermitian metrics.
		
\begin{theorem}[Main Theorem]\label{Isom thm}
	Let $E$ be a holomorphic vector bundle of rank $r$ over a complex manifold $X$.
	Assume that $(E,h)\ge_\SNak 0$.
\begin{enumerate}[$(\rm i)$]\setlength{\itemsep}{2ex}
  \item If $W$ is a holomorphic vector bundle over $X$, then for any $q\ge 0$ and $ m\ge1$, we have
\begin{equation*}
  H^q(X, W\otimes {S}^m\calE(S^mh))
		\simeq
		H^q(\P(E^*),\pi^*W\otimes L_E^{m}\otimes\calI(h_L^{m+r}\otimes\pi^*\det h^*) ),
\end{equation*}
		  \item and for any $p,q\ge 0$, we have
\begin{equation*}
  H^q(X, \Omega_X^p\otimes \calE(h))
		\simeq
		H^q(\P(E^*),\Omega_{\PP(E^*)}^p\otimes L_E^{}\otimes\calI(h_L^{1+r}\otimes\pi^*\det h^*) ).
\end{equation*}
\end{enumerate}
\end{theorem}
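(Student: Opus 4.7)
My plan is to adapt Schneider's spectral-sequence proof of the classical Le Potier isomorphism to the setting of multiplier submodule sheaves. For both parts I will apply the Leray spectral sequence for $\pi\colon\PP(E^*)\to X$ together with the projection formula, reducing the isomorphisms to the computation of direct and higher direct images under $\pi$. The two classical ingredients that must be replaced are Bott's vanishing and the identity $\pi_*L_E^m=S^mE$.

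The first task will be a relative Nadel vanishing:
\[
R^q\pi_*\bigl(L_E^m\otimes\calI(h_L^{m+r}\otimes\pi^*\det h^*)\bigr)=0,\qquad q\geq 1,\ m\geq 1.
\]
Using \eqref{formula K/X} I will rewrite this sheaf as $K_{\PP(E^*)/X}\otimes A\otimes\calI(h_A)$, where $A:=L_E^{m+r}\otimes\pi^*\det E^*$ and $h_A:=h_L^{m+r}\otimes\pi^*\det h^*$; by the definition of strong Nakano semi-positivity, $(A,h_A)\geq_\Grif 0$, and $A$ restricted to each fibre is $\calO_{\PP^{r-1}}(m+r)$, which is ample. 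The vanishing will then follow from the relative Nadel vanishing theorem along the projective fibres.

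The second task will be the direct-image identification
\[
\pi_*\bigl(L_E^m\otimes\calI(h_L^{m+r}\otimes\pi^*\det h^*)\bigr)\cong S^m\calE(S^mh).
\]
Starting from the classical $\pi_*L_E^m=S^mE$, I need to match the two $L^2$-integrability conditions. The key ingredient will be the fibre-integration identity
\[
\int_{\PP(E^*_x)}\frac{|\sigma(\xi)|^2}{|\xi|^{2(m+r)}_{h^*_x}}(\det h_x)^{-1}\,\omega_{FS}^{r-1}=c_{m,r}\,|\sigma|^2_{S^mh}(x),
\]
which realises the symmetric-power metric as a fibrewise projective integral; this computation underlies Berndtsson's $L^2$-metric result and the Liu-Sun-Yang identification cited in Remark (iv). Fubini's theorem will then match local $L^2$-sections of $L_E^m\otimes\calI(h_A)$ over $\pi^{-1}(U)$ with local $L^2$-sections of $S^mE$ over $U$ with respect to $S^mh$, yielding part (i) when combined with the vanishing above and the collapsed spectral sequence.

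For part (ii), Schneider's filtration argument will be generalised: the relative cotangent sequence induces a filtration on $\Omega^p_{\PP(E^*)}$ with graded pieces $\pi^*\Omega^s_X\otimes\Omega^{p-s}_{\PP(E^*)/X}$. After the projection formula, the task reduces to a twisted Bott-type vanishing asserting that $R^q\pi_*\bigl(\Omega^a_{\PP(E^*)/X}\otimes L_E\otimes\calI(h_L^{1+r}\otimes\pi^*\det h^*)\bigr)$ is nonzero only for $a=q=0$, in which case it equals $\calE(h)$. The $a=0$ piece will be the $m=1$ case of the second task; the $a\geq 1$ cases will be handled via the Koszul complex arising from the relative Euler sequence $0\to\calO\to\pi^*E^*\otimes L_E\to T_{\PP(E^*)/X}\to 0$, combined with the relative Nadel vanishing. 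I expect the direct-image identification to be the main obstacle, since both directions of the $L^2$-equivalence require delicate handling of local weights and an application of Fubini on fibres endowed with varying fibre-metrics.
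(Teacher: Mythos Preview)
Your plan for part~(i) coincides with the paper's approach: Leray's theorem together with the identification $\pi_*\bigl(L_E^m\otimes\calI(h_L^{m+r}\otimes\pi^*\det h^*)\bigr)\simeq S^m\calE(S^mh)$ via the Liu--Sun--Yang fibre-integral formula and Fubini, plus vanishing of the higher direct images. Where you are too brief is the ``relative Nadel vanishing''. No single off-the-shelf theorem delivers this; the paper proves it by a two-step argument. First, over $\{\det h<\infty\}$ the metric $h_A$ restricts fibrewise to a smooth positive metric, so the multiplier ideal is trivial there and Kodaira vanishing kills the fibre cohomology; Grauert semicontinuity then confines the support of $R^q\pi_*$ to a proper analytic subset. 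Second---and this is the substantive input you do not name---torsion-freeness of $R^q\pi_*(K_{\PP(E^*)}\otimes A\otimes\calI(h_A))$, deduced from Zhou--Zhu's injectivity theorem, forces it to vanish. Your fibrewise-ampleness observation supplies the first step, but you should make the injectivity/torsion-freeness ingredient explicit rather than invoke a black-box relative Nadel theorem.

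For part~(ii) your route genuinely differs from the paper's. Instead of filtering $\Omega^p_{\PP(E^*)}$ by the relative cotangent sequence and unwinding a Koszul complex, the paper applies part~(i) with $W=\Omega^p_X$, $m=1$ to land on $\calF^p:=\pi^*\Omega^p_X\otimes L_E\otimes\calI$, and then compares $\calF^p$ with $\calG^p:=\Omega^p_{\PP(E^*)}\otimes L_E\otimes\calI$ directly: on a Stein cover $\{U_j\}$ trivialising $E$, the sets $\pi^{-1}(U_j)$ form a Leray cover for both sheaves (by the same generic-vanishing-plus-torsion-free mechanism), and Bott's $H^0$-vanishing on fibres gives $H^0(\pi^{-1}(U_j),\calF^p)\cong H^0(\pi^{-1}(U_j),\calG^p)$. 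This sidesteps the relative Euler sequence entirely. Your approach should also succeed, but note that the twisted Bott vanishing you need for $R^q\pi_*(\Omega^a_{\PP(E^*)/X}\otimes L_E\otimes\calI)$ with $a\geq 1$ is not itself a Nadel-type statement, since the bundle is not of the form $K\otimes(\text{line bundle})$; you would still fall back on the generic-plus-torsion-free argument applied term by term to the Koszul pieces, which is more laborious than the paper's direct comparison.

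One calibration: you expect the direct-image identification to be the main obstacle, but in the paper it is dispatched cleanly once the Liu--Sun--Yang formula is available; the genuine subtlety lies in the vanishing of higher direct images and the injectivity input that underlies it.
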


\begin{remark}
  \begin{enumerate}[$(\rm i)$]
    \item
	By the formula $(\ref{formula K/X})$, $L_E^{m}\otimes\calI(h_L^{m+r}\otimes\pi^*\det h^*)$ is actually
	\[K_{\mathbb{P}(E^*)/X}\otimes L_E^{m+r}\otimes\pi^*\det E^*\otimes\calI(h_L^{m+r}\otimes\pi^*\det h^*).\]
    \item
    It is well-known that  the multiplier ideal sheaf itself does Not have the factorial property under proper modifications  but obtains it when twisted with the canonical line bundle. This explains our focus on the isomorphism
    \[\pi_*(K_{\mathbb{P}(E^*)}\otimes (L_E)^{m+r}\otimes \pi^*\det E^*)=K_X\otimes S^mE\]
rather than $\pi_*(L_E)^m=S^mE$.
    \item We actually prove the main theorem when there exists locally a smooth function $\psi$ such that $he^{-\psi}$ is strongly Nakano semi-positive. Especially,  if $h$ is smooth, then muiltiplier submodule sheaves become trivial and the above theorem becomes Le potier's isomorphism and Kobayashi-Ochiai's isomorphism.
  \end{enumerate}
\end{remark}

Our proof is different from Le potier's and Schneider's. Due to Liu-Sun-Yang's formula \cite{LSY13}, we can show that
\begin{equation}\label{11111}
  \pi_*(K_{\mathbb{P}(E^*)/X}\otimes L_E^{m+r}\otimes\pi^*\det E^*\otimes\calI(h_L^{m+r}\otimes\pi^*\det h^*))\simeq\calS^m\calE(S^mh),
\end{equation}
for Griffiths semi-positive singular Hermitian metrics $h$.
Therefore, Theorem \ref{Isom thm}-(i) can be deduced from Leray's isomorphism theorem if one can show the vanishing of higher direct image sheaves
\begin{equation}\label{22222}
   R^k\pi_*(K_{\mathbb{P}(E^*)/X}\otimes L_E^{m+r}\otimes\pi^*\det E^*\otimes\calI(h_L^{m+r}\otimes\pi^*\det h^*)).
\end{equation}
When $h$ is smooth, (\ref{22222}) can be obtained by K\"unneth's formula and Bott's vanishing.
However, the approach fails when twisted with mutiplier submodule sheaves. Noticing that $h_L^{m+r}\otimes\pi^*\det h^*$ is positive along the fiber $\pi^{-1}(x)$ such that $\det h(x)$ is finite, the strongly Nakano semi-positivity of $h$ assures that the higher direct images vanishes (see Lemma \ref{Direct image vanishing}).
In fact, we only need to assume that there is locally a smooth function $\psi$ such that $he^{-\psi}$ is Griffiths semi-positive and strongly Nakano semi-positive respectively.
In particular, we give a new proof of Le potier's isomorphism.

    It is noteworthy that (\ref{11111}) offers insight into exploring the coherence of $\calE(h)$ associated to a Griffiths semi-positive vector bundle $(E,h)$ through an investigation of the coherence of $\mathcal{I}(h_L^{1+r}\otimes\pi^*\det h^*)$, owing to Grauert's direct image theorem.

\begin{proposition}[Proposition \ref{Coherence}]
    Let $(E,h)\ge_\Grif 0$.
    \begin{enumerate}[$(\rm i)$]
        \item  If $\calI(h_L^{1+r}\otimes\pi^*\det h^*)$ is coherent, then $\calE(h)$ is coherent.
        \item  If $\det h$ has a discrete unbounded locus or analytic singularities, then  $\calI(h_L^{1+r}\otimes\pi^*\det h^*)$ is coherent.
    \end{enumerate}
\end{proposition}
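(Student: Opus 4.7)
For part (i), I would invoke the isomorphism (\ref{11111}) with $m=1$. Using $K_{\PP(E^*)/X}\otimes L_E^{1+r}\otimes\pi^*\det E^*=L_E$ from (\ref{formula K/X}), it simplifies to
\[
\pi_*\bigl(L_E\otimes\calI(h_L^{1+r}\otimes\pi^*\det h^*)\bigr)\simeq\calE(h).
\]
If the multiplier ideal sheaf on the left is coherent, tensoring with the locally free sheaf $\calO(L_E)$ preserves coherence, and Grauert's direct image theorem applied to the proper map $\pi$ then yields coherence of the push-forward, hence of $\calE(h)$.

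For part (ii) the weight of the line bundle metric $h_L^{1+r}\otimes\pi^*\det h^*$ is $\phi:=(r+1)\phi_{h_L}-\pi^*\phi_{\det h}$, a \emph{difference} of two plurisubharmonic functions (Griffiths semi-positivity of $(E,h)$ makes both $\phi_{h_L}$ and $\phi_{\det h}=-\log\det h$ psh), so Nadel's coherence theorem does not apply to $\phi$ directly. Each hypothesis in (ii) supplies the missing structure in a different way. In the analytic-singularities case one has locally $\phi_{\det h}=c\log\sum_j|f_j|^2+u$ with $u$ bounded and $f_j$ holomorphic; modulo the bounded perturbation (which does not alter $\calI$), the weight becomes $(r+1)\phi_{h_L}-c\log\sum_j|\pi^*f_j|^2$. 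My plan is to take a log resolution $\mu\colon Y\to\PP(E^*)$ of the pulled-back ideal $(\pi^*f_1,\dots,\pi^*f_k)$, turning the singular term into $-c\log|s_D|^2+\text{smooth}$ along a simple normal crossing divisor $D$; on $Y$, the multiplier ideal sheaf can then be expressed via the coherent Nadel sheaf $\calI((r+1)\mu^*\phi_{h_L})$ together with ideal saturations by the local defining monomials of $D$, hence is coherent, and pushing it forward along the proper map $\mu$ yields coherence on $\PP(E^*)$.

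For the discrete-unbounded-locus case, let $Z\subset X$ denote this isolated set. Over $\PP(E^*)\setminus\pi^{-1}(Z)$ the pullback $\pi^*\phi_{\det h}$ is locally bounded, so $\phi$ differs from $(r+1)\phi_{h_L}$ by a locally bounded function and $\calI(\phi)$ agrees with the coherent Nadel sheaf $\calI((r+1)\phi_{h_L})$ there. The substantive step is to establish coherence in a neighbourhood of each fiber $\pi^{-1}(z_0)$, $z_0\in Z$. My plan is to reduce locally to the analytic-singularities case by approximating $\phi_{\det h}$ near $z_0$ by psh functions with analytic singularities (via Demailly's regularisation), controlling the induced multiplier ideals by a strong openness-type argument, and exploiting the fact that $e^{-\phi}\to 0$ along $\pi^{-1}(z_0)$, so that $\calI(\phi)$ becomes ``large'' (and in particular finitely generated) near the exceptional fiber. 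The main obstacle throughout part (ii) is the non-plurisubharmonicity of $\phi$: the analytic-singularities case converts the difference into an algebraic object via log resolution, while the discrete case requires the more delicate analytic reduction above.
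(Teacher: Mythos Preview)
Your argument for part (i) is correct and is exactly what the paper does: it invokes Proposition~\ref{isom of sheaves} (which is the $m=1$ case of (\ref{11111})) together with Grauert's direct image theorem.

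For part (ii) the paper gives no detailed proof; it simply says that ``building upon the idea of Inayama \cite{In22} and Zou \cite{Zou22}'' one can establish coherence, i.e.\ it points to their $L^2$-based techniques. Your approach is different and more algebraic; let me separate the two subcases.

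\textbf{Analytic singularities.} Your colon-ideal idea is sound, but the log-resolution step is unnecessary and introduces a delicate pushforward you do not justify (one needs $\mu_*K_Y=K_{\PP(E^*)}$ rather than Lemma~\ref{sheaf under modification}, since the weight is not bounded above). One can argue directly on $\PP(E^*)$: locally write $\phi=(r+1)\phi_{h_L}-c\,\pi^*\!\log\sum|f_j|^2+O(1)$, choose an integer $N>c$, and set
\[
\psi':=(r+1)\phi_{h_L}+(N-c)\,\pi^*\!\log\sum|f_j|^2,
\]
which is plurisubharmonic. Since $e^{-\phi}=(\sum|\pi^*f_j|^2)^N e^{-\psi'}$ up to a bounded factor and $(\sum|\pi^*f_j|^2)^N=\sum_\alpha c_\alpha|F_\alpha|^2$ with $F_\alpha$ the degree-$N$ monomials in the $\pi^*f_j$ and $c_\alpha>0$, one gets
\[
\calI(\phi)=\bigl(\calI(\psi'):(\pi^*f_1,\dots,\pi^*f_k)^N\bigr),
\]
a colon ideal of a Nadel sheaf by a coherent ideal, hence coherent. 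This is a legitimate alternative to the paper's reference to Zou.

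\textbf{Discrete unbounded locus.} Here there is a genuine gap. First, your claim that $e^{-\phi}\to 0$ along $\pi^{-1}(z_0)$ is false. For instance, on $\CC$ take $E$ trivial of rank $2$ with $h=\mathrm{diag}(1,|z|^{-2})$; then at the fiber point $[0:1]$ one computes $\phi=(r+1)\phi_{h_L}-\phi_{\det h}=3\cdot 2\log|z|-2\log|z|=4\log|z|$, so $e^{-\phi}=|z|^{-4}\to+\infty$. Thus $\calI(\phi)$ is \emph{not} ``large'' near the whole exceptional fiber. Second, the Demailly-approximation reduction does not work as stated: if $\psi_k\searrow\phi_{\det h}$ has analytic singularities, then $(r+1)\phi_{h_L}-\pi^*\psi_k\le (r+1)\phi_{h_L}-\pi^*\phi_{\det h}$, which only gives the inclusion $\calI(\phi)\subseteq\calI((r+1)\phi_{h_L}-\pi^*\psi_k)$. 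There is no strong-openness principle for the \emph{subtracted} psh term that would yield equality or even stabilisation as $k\to\infty$, so you cannot conclude coherence of $\calI(\phi)$ from that of the approximants. The paper's intended route here (adapting Inayama's $L^2$ argument) uses H\"ormander-type estimates rather than such a reduction; your sketch does not supply a substitute.
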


	Similarly, we can also  obtain the following isomorphism theorem.

\begin{theorem}
\label{Isom thm'}
	Let $E$ be a holomorphic vector bundle of rank $r$ over a complex manifold $X$.
	Assume that $(E,h)\ge_\Grif 0$,
\begin{enumerate}[$(\rm i)$]\setlength{\itemsep}{2ex}
  \item then for $q\ge 0, m\ge1$, we have
  \[
		H^q(X, K_X\otimes {S}^m\calE\otimes\det \calE(S^mh\otimes\det h))
		\simeq
		H^q(\P(E^*),K_{\P(E^*)}\otimes L_E^{m+r}\otimes\calI(h_L^{m+r}) ),
\]
  \item and for $p,q\ge 0$, we have
\[
H^q(X, \Omega_X^p\otimes \calE\otimes\det\calE(h\otimes\det h))
		\simeq
		H^q(\P(E^*),\Omega_{\P(E^*)}^p\otimes L_E^{}\otimes\pi^*\det E \otimes\calI(h_L^{1+r} )).\]
\end{enumerate}
\end{theorem}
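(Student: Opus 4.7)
My plan for both parts is to reduce to Theorem \ref{Isom thm} via the auxiliary pair $(E', h') := (E \otimes \det E, h \otimes \det h)$, which is strongly Nakano semi-positive by the remark following Definition \ref{SNak}. Under the canonical identifications $\P(E'^*) \cong \P(E^*)$, $L_{E'} = L_E \otimes \pi^*\det E$, and $h_{L'} = h_L \otimes \pi^* \det h$, together with $\det h' = (\det h)^{r+1}$, one obtains the key cancellation
\[ h_{L'}^{1+r} \otimes \pi^*\det h'^* = h_L^{1+r}, \]
which accounts for the appearance of the exponent $r+1$ on the right-hand sides of Theorem \ref{Isom thm'}.

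For part (ii), I would apply Theorem \ref{Isom thm}(ii) to $(E', h')$ directly. The right-hand side becomes $H^q(\P(E^*), \Omega_{\P(E^*)}^p \otimes L_E \otimes \pi^*\det E \otimes \calI(h_L^{1+r}))$, exactly as claimed, while the left-hand side $H^q(X, \Omega_X^p \otimes \calE(h'))$ matches $H^q(X, \Omega_X^p \otimes \calE \otimes \det\calE(h \otimes \det h))$ once one reads the latter notation as the multiplier submodule of $E \otimes \det E$ endowed with the metric $h \otimes \det h$.

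For part (i) with $m > 1$, a naive substitution into Theorem \ref{Isom thm}(i) leaves a spurious factor $\pi^*(\det h)^{m-1}$ in the multiplier ideal, so I would instead adapt the Leray-theoretic argument underlying the proof of Theorem \ref{Isom thm}(i). The plan has three steps: (a) extend the Liu-Sun-Yang formula \eqref{11111} to the identification
\[ \pi_*(K_{\P(E^*)/X} \otimes L_E^{m+r} \otimes \calI(h_L^{m+r})) \simeq S^m\calE \otimes \det\calE(S^m h \otimes \det h), \]
by computing the $L^2$-metric on $\pi_*(K_{\P(E^*)/X} \otimes L_E^{m+r}) = S^m E \otimes \det E$ via a $\pi^*(\det E, \det h)$-twist of the original Liu-Sun-Yang calculation; (b) establish the higher direct image vanishing
\[ R^k\pi_*(K_{\P(E^*)/X} \otimes L_E^{m+r} \otimes \calI(h_L^{m+r})) = 0 \quad \text{for all } k > 0; \]
and (c) conclude via the Leray spectral sequence combined with the projection formula applied to $\pi^*K_X$.

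The main obstacle will be step (b). Since $L_E^{m+r}$ restricts to the ample line bundle $\calO(m+r)$ on each fiber $\pi^{-1}(x) \cong \P^{r-1}$ with $h_L|_{\pi^{-1}(x)}$ a Fubini-Study metric, a fiberwise Nadel vanishing gives $H^k(\pi^{-1}(x), K_{\P^{r-1}} \otimes \calO(m+r) \otimes \calI(h_L^{m+r}|_{\pi^{-1}(x)})) = 0$ for $k > 0$ and $m \ge 1$. Combining this fiberwise vanishing with the coherence of the multiplier ideal sheaf and standard Grauert base change then lifts the vanishing to the desired global statement on $X$.
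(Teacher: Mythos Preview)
Your overall strategy is sound and matches the paper's route: the paper obtains Theorem~\ref{Isom thm'} as the special case $(M,h_M)=(\det E,\det h)$ of Theorem~\ref{thm twist iso} (equivalently, Remarks~\ref{rmk iso} and~\ref{rmk van}), which is exactly the twisted Leray argument you outline in steps (a)--(c). Your reduction of part~(ii) to Theorem~\ref{Isom thm}(ii) via $(E',h')=(E\otimes\det E,\,h\otimes\det h)$ is correct and is a pleasant shortcut the paper does not spell out; the cancellation $h_{L'}^{\,1+r}\otimes\pi^*\det h'^{\,*}=h_L^{\,1+r}$ is exactly right.

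There is, however, a genuine gap in your justification of step~(b). Two issues. First, the object you want to compute on a fibre is $H^k\bigl(\pi^{-1}(x),\,K_{\PP^{r-1}}\otimes\calO(m+r)\otimes\calI(h_L^{m+r})|_{\pi^{-1}(x)}\bigr)$, not the cohomology with coefficients in $\calI\bigl(h_L^{m+r}|_{\pi^{-1}(x)}\bigr)$; restriction of multiplier ideals does not commute with restriction of metrics, so fibrewise Nadel vanishing for the latter does not directly control the former. Second, even granting fibrewise vanishing on a Zariski open set (where the ideal is trivially $\calO$ and Grauert's theorem plus Kodaira vanishing applies as in Lemma~\ref{Direct image vanishing}), ``standard Grauert base change'' only yields that $R^k\pi_*$ is supported on a proper analytic subset. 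To conclude $R^k\pi_*=0$ you need \emph{torsion-freeness} of $R^k\pi_*\bigl(K_{\PP(E^*)/X}\otimes L_E^{m+r}\otimes\calI(h_L^{m+r})\bigr)$, and that is the nontrivial input: it comes from Theorem~\ref{torsionfree thm} (ultimately from the injectivity Theorem~\ref{ZZ inj}), using only that $h_L^{m+r}$ is Griffiths semi-positive. This is precisely why only $(E,h)\ge_\Grif 0$ is needed here, whereas Lemma~\ref{Direct image vanishing} for Theorem~\ref{Isom thm} required the stronger $\SNak$ hypothesis to make $h_L^{m+r}\otimes\pi^*\det h^*$ semi-positive. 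Once you insert the torsion-freeness step, your argument for (b) goes through and the rest is as you describe.
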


	Theorem \ref{Isom thm} and Theorem \ref{Isom thm'} builds a bridge between the multiplier submodule sheaves and the multiplier ideal sheaves on its dual projectivized bundles. This connection offers a pathway to explore the properties of multiplier submodule sheaves by leveraging the well-developed theory of multiplier ideal sheaves, which includes Koll\'ar-type injective theorems (\cite{Kol86,Fuj12,FM21,ZZ}), Nadel-type vanishing theorems (\cite{Nadel,C14,MZ19}), and holomorphic Morse inequalities (\cite{D12b,MM07,DX21}).

Firstly, Zhou-Zhu \cite{ZZ} derived an injective theorem for pseudo-effective line bundles on holomorphically convex manifolds (see Theorem \ref{ZZ inj}). Consequently, using Theorem  \ref{Isom thm}, we can establish a Koll\'ar-type injective theorem for strongly Nakano positive vector bundles.
\begin{corollary}[Theorem \ref{inj for vb}]\label{cor inj}
	Let $(X,\omega)$ be a holomorphically convex K\"ahler manifold and $(E,h)\geq_\SNak0$.
	Assume that
	\begin{equation*}
	(E,h)\geq_\SNak b\sqrt{-1}\Theta_{(M,h_M)}
	\end{equation*}
	for some $0<b<+\infty$,
	and
	\begin{equation*}
	\sqrt{-1}\Theta_{(M,h_M)}\ge-C\omega
	\end{equation*}
	for some constant $C$.
	Then for any non-zero $s\in H^0(X,M)$ satisfying
	\begin{equation*}
	\sup_\Omega |s|_{h_M}<+\infty
	\end{equation*}
	for every $\Omega\Subset X$, we obtain that the map
	\begin{equation*}
	H^q(X,K_X\otimes S^m\calE(S^mh))\xrightarrow{\otimes s}
	H^q(X,K_X\otimes S^m\calE\otimes \calM(S^mh\otimes h_M))
	\end{equation*}
	is injective for any $q\ge 0$.
\end{corollary}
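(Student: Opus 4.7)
The plan is to transport the injectivity question from $X$ to the projectivized bundle $\P(E^*)$ via Theorem \ref{Isom thm} and then to invoke Zhou--Zhu's injectivity theorem (Theorem \ref{ZZ inj}) for pseudo-effective line bundles endowed with singular Hermitian metrics.

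First, applying Theorem \ref{Isom thm}(i) with $W=K_X$ together with the relative canonical formula $(\ref{formula K/X})$ identifies
\[
H^q\bigl(X,K_X\otimes S^m\calE(S^mh)\bigr)\simeq H^q\bigl(\P(E^*),\,K_{\P(E^*)}\otimes F\otimes\calI(h_F)\bigr),
\]
where $F:=L_E^{m+r}\otimes\pi^*\det E^*$ is equipped with the induced singular metric $h_F:=h_L^{m+r}\otimes\pi^*\det h^*$. A parallel identification---using the factorisation behind $(\ref{11111})$ applied to the twisted metric $S^mh\otimes h_M$ on $S^mE\otimes M$---rewrites the target as
\[
H^q\bigl(\P(E^*),\,K_{\P(E^*)}\otimes F\otimes\pi^*M\otimes\calI(h_F\otimes\pi^*h_M)\bigr),
\]
and, by naturality, multiplication by $s$ on $X$ corresponds to multiplication by $\pi^*s$ on $\P(E^*)$.

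Next I verify the hypotheses of Theorem \ref{ZZ inj} for the data $\bigl(\P(E^*),\,(F,h_F),\,\pi^*s\in H^0(\P(E^*),\pi^*M)\bigr)$.
\textbf{(i)} Since $\pi$ is proper and $X$ is holomorphically convex K\"ahler, $\P(E^*)$ is holomorphically convex and admits a K\"ahler form $\pi^*\omega+\varepsilon\omega_{\rm FS}$ for sufficiently small $\varepsilon>0$.
\textbf{(ii)} Strong Nakano semipositivity of $(E,h)$ yields $(L_E,h_L)\ge_\Grif0$ and $(L_E^{r+1}\otimes\pi^*\det E^*,h_L^{r+1}\otimes\pi^*\det h^*)\ge_\Grif0$; tensoring with $(L_E^{m-1},h_L^{m-1})$ for $m\ge1$ shows $(F,h_F)$ is pseudo-effective.
\textbf{(iii)} The hypothesis $(E,h)\ge_\SNak b\sqrt{-1}\Theta_{(M,h_M)}$ unwinds into $\sqrt{-1}\Theta_{(F,h_F)}\ge b\pi^*\sqrt{-1}\Theta_{(M,h_M)}\ge -bC\pi^*\omega$, which is an admissible lower bound with respect to the chosen K\"ahler form.
\textbf{(iv)} Finally, $|\pi^*s|_{\pi^*h_M}=|s|_{h_M}\circ\pi$ is bounded on $\pi^{-1}(\Omega)\Subset\P(E^*)$ for every $\Omega\Subset X$.

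Theorem \ref{ZZ inj} then gives the injectivity of $\otimes\pi^*s$ on $\P(E^*)$, and transporting back through the isomorphisms above yields the claim. The main obstacle will be checking the naturality of Theorem \ref{Isom thm} under the twist $\otimes s$, i.e.~that the source and target isomorphisms intertwine multiplication by $s$ on $X$ with multiplication by $\pi^*s$ on $\P(E^*)$; this should follow from the direct-image vanishing of Lemma \ref{Direct image vanishing} combined with $(\ref{11111})$, applied both with the metric $h$ and with its twist by $h_M$. A secondary point is the precise interpretation of the sheaf $S^m\calE\otimes\calM(S^mh\otimes h_M)$, which must be shown to correspond, via the direct image, to $\pi^*M\otimes\calI(h_F\otimes\pi^*h_M)$ on $\P(E^*)$.
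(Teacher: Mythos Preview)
Your proposal is correct and follows the same route as the paper: transport both source and target cohomology groups to $\P(E^*)$ and then apply Zhou--Zhu's injectivity (Theorem \ref{ZZ inj}) to the line bundle $(F,h_F)=(L_E^{m+r}\otimes\pi^*\det E^*,\,h_L^{m+r}\otimes\pi^*\det h^*)$ and the section $\pi^*s$. Two small clarifications: the paper dispatches your ``secondary point'' by citing the twisted isomorphism Theorem \ref{thm twist iso} for the target (and this is where the hypothesis $\sqrt{-1}\Theta_{(M,h_M)}\ge -C\omega$ is actually used, not in the verification of the hypotheses of Theorem \ref{ZZ inj}); and your proposed K\"ahler form $\pi^*\omega+\varepsilon\omega_{\rm FS}$ is not globally defined since $\omega_{\rm FS}$ is only fibrewise, so the K\"ahlerity of $\P(E^*)$ over a K\"ahler base needs the standard argument via a smooth metric on $E$.
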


	Secondly, Cao \cite{C14} established a Kawamata-Viehweg-Nadel-type vanishing theorem for a pseudo-effective line bundle $(M,h_M)$ twisted with the upper continuous variant $\calI_+(h_M)$ of the multiplier ideal sheaf.
	Additionally, Guan-Zhou \cite{GZ15} proved Demailly's strong openness conjecture: $\calI_+(h_M)=\calI(h_M)$.
	Therefore, by applying Theorem \ref{Isom thm}, we can deduce a Nadel-type vanishing theorem for strongly Nakano semi-positive vector bundles on compact K\"ahler manifolds.

\begin{corollary}[Corollary \ref{Cao type vanishing}]
	Let  $(E, h)\ge_\SNak 0$  be a holomorphic vector bundle of rank $r$ over a compact K\"ahler manifold $X$ of dimension $n$.
	then for any $m\ge 1$, we have
	\begin{equation*}
		H^q(X,K_X\otimes S^m\calE(S^mh))=0
	\end{equation*}
	for any $\ q\ge n+r-{\rm nd}(L_E^{m+r}\otimes\pi^*\det E^*,h_L^{m+r}\otimes\pi^*\det h^*)$.
\end{corollary}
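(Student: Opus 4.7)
The plan is to apply Theorem~\ref{Isom thm}-(i) with $W=K_X$ to transfer the cohomology group on $X$ to one on the projectivized bundle $\P(E^*)$, and then invoke Cao's Nadel-type vanishing theorem for pseudo-effective line bundles twisted with multiplier ideal sheaves.

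Setting $W=K_X$ in Theorem~\ref{Isom thm}-(i) yields
\[
H^q(X,\,K_X\otimes S^m\calE(S^mh))\;\simeq\;H^q\bigl(\P(E^*),\,\pi^*K_X\otimes L_E^{m}\otimes\calI(h_L^{m+r}\otimes\pi^*\det h^*)\bigr).
\]
Using the relative canonical bundle formula~(\ref{formula K/X}), namely $K_{\P(E^*)/X}=L_E^{-r}\otimes\pi^*\det E$, we rewrite $\pi^*K_X\otimes L_E^{m}=K_{\P(E^*)}\otimes(L_E^{m+r}\otimes\pi^*\det E^*)$. Set $F:=L_E^{m+r}\otimes\pi^*\det E^*$ and $h_F:=h_L^{m+r}\otimes\pi^*\det h^*$, so that the right-hand side becomes $H^q(\P(E^*),\,K_{\P(E^*)}\otimes F\otimes\calI(h_F))$. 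The line bundle $(F,h_F)$ is pseudo-effective on the compact K\"ahler manifold $\P(E^*)$: by Definition~\ref{SNak}, strong Nakano semi-positivity of $(E,h)$ gives $(L_E^{r+1}\otimes\pi^*\det E^*,\,h_L^{r+1}\otimes\pi^*\det h^*)\geq_\Grif 0$, and since $(L_E,h_L)\geq_\Grif 0$ by Proposition~\ref{pro posi}, tensoring with $(L_E,h_L)^{\otimes(m-1)}$ preserves Griffiths semi-positivity, whence $(F,h_F)\geq_\Grif 0$.

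Finally, applying Cao's Nadel-type vanishing theorem to $(F,h_F)$ on $\P(E^*)$, combined with the Guan--Zhou strong openness theorem identifying $\calI_+(h_F)$ with $\calI(h_F)$, yields
\[
H^q\bigl(\P(E^*),\,K_{\P(E^*)}\otimes F\otimes\calI(h_F)\bigr)=0
\]
for $q\geq\dim\P(E^*)-{\rm nd}(F,h_F)=(n+r-1)-{\rm nd}(F,h_F)$, which a fortiori covers the stated range $q\geq n+r-{\rm nd}(F,h_F)$. The main technical ingredient here is the imported vanishing theorem on the higher-dimensional total space $\P(E^*)$; the remaining steps---the relative canonical bundle bookkeeping and the verification of pseudo-effectiveness of $(F,h_F)$ from the SNak hypothesis---are routine, so no serious obstacle arises once Theorem~\ref{Isom thm} is available.
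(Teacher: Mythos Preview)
Your approach is correct and essentially identical to the paper's: apply Theorem~\ref{Isom thm}-(i) with $W=K_X$, rewrite via the relative canonical bundle formula, and invoke Theorem~\ref{K-V-N-C} (Cao's vanishing combined with Guan--Zhou strong openness) on the compact K\"ahler manifold $\P(E^*)$. The paper states only that the result ``can be derived from Theorem~\ref{Isom thm} and Theorem~\ref{K-V-N-C}'' without spelling out the bookkeeping, so your added details (the $K_{\P(E^*)/X}$ rewriting and the verification that $(F,h_F)\geq_\Grif 0$) are welcome.

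One small correction: Theorem~\ref{K-V-N-C} gives vanishing for $q\geq \dim\P(E^*)-{\rm nd}(F,h_F)+1=(n+r-1)-{\rm nd}(F,h_F)+1=n+r-{\rm nd}(F,h_F)$, not for $q\geq (n+r-1)-{\rm nd}(F,h_F)$ as you wrote. So there is no ``a fortiori'' step---Cao's threshold on $\P(E^*)$ is exactly the range claimed in the corollary, and your intermediate claim is off by one. This does not affect the validity of the final conclusion.
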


Notice that if  $(E, h)>_\SNak 0$ over a compact (possibly non-K\"ahler) manifold $X$, then $X$ admits a K\"ahler modification and ${\rm nd}(L_E^{m+r}\otimes\pi^*\det E^*,h_L^{m+r}\otimes\pi^*\det h^*)=n+r-1$.
In addition, Corollary \ref{cor inj} conforms that cohomology groups remain invariant under K\"ahler modifications. Hence, we can conclude that

\begin{corollary}[Corollary \ref{coro vanisging for SNak>0}]
 Let  $(E, h)>_\SNak 0$  be a holomorphic vector bundle over a compact $($possibly non-K\"ahler$)$ manifold $X$, then for any $m\ge1$, we have
	\begin{equation*}
		H^q(X,K_X\otimes S^m\calE(S^mh))=0
	\end{equation*}
	for  $ q\ge  1$.
\end{corollary}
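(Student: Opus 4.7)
The plan is to reduce the possibly non-K\"ahler setting to a compact K\"ahler one via a bimeromorphic modification, then to invoke the Nadel-type vanishing of Corollary \ref{Cao type vanishing}. Concretely, I would choose a K\"ahler modification $\mu:\tilde X\to X$. Such a $\mu$ exists because strong Nakano strict positivity of $(E,h)$ furnishes a singular Hermitian metric with strictly positive curvature current on $L_E^{r+1}\otimes \pi^*\det E^*$ over $\PP(E^*)$; this line bundle is then big, so $\PP(E^*)$ lies in Fujiki's class $\mathcal C$, and hence so does $X$.

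Next I would pull back the data and apply the vanishing on $\tilde X$. Since strong Nakano strict positivity is preserved under holomorphic pullback, one has $(\mu^*E,\mu^*h)>_\SNak 0$ on $\tilde X$. By strictness, the curvature current of the metric induced on $L^{m+r}\otimes \tilde\pi^*\det(\mu^*E)^*$ dominates a K\"ahler form on a dense Zariski open subset of the total space $\PP((\mu^*E)^*)$, whence the associated numerical dimension attains the maximum $\dim\PP((\mu^*E)^*)=n+r-1$. Feeding this into Corollary \ref{Cao type vanishing} on the compact K\"ahler manifold $\tilde X$, the vanishing range $q\ge n+r-\mathrm{nd}$ collapses to $q\ge 1$, yielding
\[
H^q\bigl(\tilde X,\,K_{\tilde X}\otimes S^m\calE(S^m\mu^*h)\bigr)=0\quad\text{for all }q\ge 1.
\]

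The principal remaining step, and the one I expect to be the real obstacle, is the bimeromorphic invariance
\[
H^q\bigl(X,K_X\otimes S^m\calE(S^mh)\bigr)\simeq H^q\bigl(\tilde X,K_{\tilde X}\otimes S^m\calE(S^m\mu^*h)\bigr).
\]
The natural route is the Leray spectral sequence for $\mu$, combined with a Grauert--Riemenschneider-type vanishing $R^k\mu_*(K_{\tilde X}\otimes S^m\calE(S^m\mu^*h))=0$ for $k\ge 1$, and an identification of the zeroth direct image $\mu_*(K_{\tilde X}\otimes S^m\calE(S^m\mu^*h))$ with $K_X\otimes S^m\calE(S^mh)$. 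The Koll\'ar-type injectivity of Corollary \ref{cor inj}, applied with a section cutting out the exceptional locus of $\mu$, is the analytic input that forces this identification and prevents cohomology classes on $X$ from being killed when passed upstairs. Granted this invariance, the vanishing on $\tilde X$ transports back to $X$ and yields the claim.
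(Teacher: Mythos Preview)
Your strategy is sound and leads to a correct proof, but it takes a different path from the paper. The paper first invokes the main isomorphism (Theorem~\ref{Isom thm}) to replace $H^q(X,K_X\otimes S^m\calE(S^mh))$ by the line-bundle cohomology $H^q(\PP(E^*),K_{\PP(E^*)}\otimes L_E^{m+r}\otimes\pi^*\det E^*\otimes\calI(h_L^{m+r}\otimes\pi^*\det h^*))$, and \emph{then} takes a K\"ahler modification $\mu:\tilde P\to\PP(E^*)$ of the total space. From that point only line-bundle machinery is needed: Lemma~\ref{lem isom under mod} (bimeromorphic invariance for multiplier ideal sheaves of line bundles) transports the cohomology to $\tilde P$, and Theorem~\ref{K-V-N-C} finishes. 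By contrast, you modify the base $X$ and stay in the vector-bundle world, which forces you to establish bimeromorphic invariance for $K_X\otimes S^m\calE(S^mh)$ via Lemma~\ref{sheaf under modification} together with the torsion-freeness of Corollary~\ref{Cor torsion-free}; these tools are available in the paper, but they form a heavier package than the line-bundle version the paper actually uses.

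One point in your write-up needs correction: the claim that $(\mu^*E,\mu^*h)>_\SNak 0$ is generally false. Strict positivity requires a \emph{strictly} plurisubharmonic local twist $\psi$, and $\mu^*\psi$ is only semi-positive along the exceptional fibers of $\mu$. What survives is $(\mu^*E,\mu^*h)\ge_\SNak 0$, which is precisely the hypothesis of Corollary~\ref{Cao type vanishing}. You then still need the numerical dimension of the induced metric on $L_{\mu^*E}^{m+r}\otimes\tilde\pi^*\det(\mu^*E)^*$ to equal $n+r-1$; this is best argued by observing that $\PP((\mu^*E)^*)\to\PP(E^*)$ is itself a modification, so the top Monge--Amp\`ere mass is preserved, rather than by appealing to strict positivity upstairs (which you no longer have). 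With these adjustments your route goes through; the paper's route is shorter because it sidesteps the vector-bundle bimeromorphic invariance entirely.
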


If $(E,h)$ is merely Griffiths (semi-)positive, applying Theorem \ref{Isom thm'}, we can also derive the corresponding injectivity and vanishing theorems for $\calS^m\calE\otimes\det \calE(S^mh\otimes\det h)$ by a similar argument.

Thirdly,  Bonavero \cite{Bon98} obtained Demailly-type holomorphic Morse inequalities twisted with multiplier ideal sheaves associated to singular metrics with analytic singularities.
Consequently, employing Theorem  \ref{Isom thm'}, we can also conclude singular holomorphic Morse inequalities for holomorphic vector bundles.

\begin{corollary}[Theorem \ref{Singular Morse for v.b.}]
	Let $(X,\omega)$ be a compact complex manifold of dimension $n$ and $(E,h)$ a holomorphic vector bundle endowed with a singular Hermitian metric. Assume that there is a smooth function $\phi$ such that $(E,he^{-\phi})\ge_\Grif 0$ and  the induced metric $h_L$ on $L_E$ over $\PP(E^*)$ has analytic singularities.
	Let $V$ be a holomorphic vector bundle on $X$.
	Then for $0\leq q\leq n$, we have
	\begin{align*}
	&h^q(X, V\otimes S^m\calE\otimes\det \calE(S^mh\otimes\det h))  \\
	\leq
	& \rk(V) \frac{(m+r)^{n+r-1}}{(n+r-1)!} \int_{\PP(E^*)(q)} (-1)^q(\cur_{(L_E,h_L)})^{n+r-1} +o(m^{n+r-1}),
	\end{align*}
	and
	\begin{align*}
	&\sum_{j=0}^q(-1)^{q-j} h^j(X, V\otimes S^m\calE\otimes\det \calE(S^mh\otimes\det h)) \nonumber \\
		\leq
	& \rk(V) \frac{(m+r)^{n+r-1}}{(n+r-1)!} \int_{\PP(E^*)(\leq q)} (-1)^q(\cur_{(L_E,h_L)})^{n+r-1} +o(m^{n+r-1})
\end{align*}
	as $m\to +\infty$ $($see \textup{Section \ref{Singular holomorphic Morse inequalities for vector bundles}} for the definitions of $\PP(E^*)(q)$ and $\PP(E^*)(\leq q)$$)$.
\end{corollary}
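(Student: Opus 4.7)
The plan is to transfer the cohomology computation from $X$ to $\P(E^*)$ via Theorem \ref{Isom thm'}, and then apply Bonavero's singular holomorphic Morse inequalities \cite{Bon98} on $\P(E^*)$ to the line bundle $L_E$ equipped with the metric $h_L$, which has analytic singularities by hypothesis. This parallels the strategy already used in the paper to deduce vanishing and injectivity statements for vector bundles from their line-bundle counterparts on $\P(E^*)$.

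First I would upgrade Theorem \ref{Isom thm'}(i) to accommodate the auxiliary bundle $V$ in place of $K_X$. Starting from the underlying direct image identity
\[
\pi_*\bigl(K_{\P(E^*)}\otimes L_E^{m+r}\otimes \calI(h_L^{m+r})\bigr) \simeq K_X\otimes S^m\calE \otimes \det\calE(S^mh\otimes \det h),
\]
one tensors by $V\otimes K_X^{-1}$ and uses the projection formula together with the relative canonical formula \eqref{formula K/X}, $K_{\P(E^*)/X} = L_E^{-r}\otimes \pi^*\det E$, to get
\[
\pi_*\bigl(\pi^*(V\otimes \det E)\otimes L_E^{m}\otimes \calI(h_L^{m+r})\bigr) \simeq V\otimes S^m\calE\otimes \det\calE(S^mh\otimes \det h).
\]
The same fiberwise positivity of $h_L$ along $\P(E_x^*)\cong \P^{r-1}$ that underlies the proof of Theorem \ref{Isom thm'} continues to give the vanishing of higher direct images after twisting by the pullback $\pi^*(V\otimes \det E)$ (this is the content of the analogue of Lemma \ref{Direct image vanishing} used in the proof of Theorem \ref{Isom thm'}). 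Therefore the Leray spectral sequence degenerates to
\[
H^q\bigl(X,V\otimes S^m\calE\otimes \det\calE(S^mh\otimes \det h)\bigr) \simeq H^q\bigl(\P(E^*),\pi^*(V\otimes \det E)\otimes L_E^m\otimes \calI(h_L^{m+r})\bigr).
\]

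Next, I would rewrite the bundle on $\P(E^*)$ in a form directly amenable to Bonavero's framework. Setting $F:= L_E^{-r}\otimes \pi^*(V\otimes \det E)$, which is a holomorphic vector bundle of rank $\rk(V)$, one has
\[
\pi^*(V\otimes \det E)\otimes L_E^m\otimes \calI(h_L^{m+r}) \;=\; F\otimes L_E^{m+r}\otimes \calI(h_L^{m+r}).
\]
Since $\dim \P(E^*)=n+r-1$ and $h_L$ has analytic singularities, Bonavero's singular holomorphic Morse inequalities applied with $k:=m+r\to+\infty$ yield both the weak and the strong asymptotic estimates for $h^q\bigl(\P(E^*),F\otimes L_E^k\otimes \calI(h_L^k)\bigr)$ with leading term
\[
\rk(F)\,\frac{k^{n+r-1}}{(n+r-1)!}\int_{\P(E^*)(q)}(-1)^q(\cur_{(L_E,h_L)})^{n+r-1}
\]
and the analogous expression on $\P(E^*)(\leq q)$. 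Substituting $k=m+r$, $\rk(F)=\rk(V)$ and composing with the isomorphism above gives exactly the two inequalities stated in the theorem.

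The main obstacle, and essentially the only point requiring real care, is the higher direct image vanishing in Step 1: one must verify that twisting by $\pi^*(V\otimes \det E)$ does not disturb the fiberwise vanishing already used to prove Theorem \ref{Isom thm'}. Because $V\otimes \det E$ is a pullback from $X$, the projection formula reduces this to the vanishing already in force, and the analytic singularities of $h_L$ keep $\calI(h_L^{m+r})$ well behaved. Once this is granted, the passage from $L_E^m$ to $L_E^{m+r}$ is absorbed into the finite-rank, $m$-independent twist $F$, and the rest is a direct invocation of \cite{Bon98}.
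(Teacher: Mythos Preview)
Your proposal is correct and follows the same route as the paper: transfer the cohomology to $\PP(E^*)$ via the Le Potier-type isomorphism, then apply Bonavero's inequalities to $(L_E,h_L)$. The only cosmetic difference is that the paper does not redo the ``upgrade to $V$'' by hand; it simply invokes Theorem \ref{thm twist iso} with $W=V$ and $(M,h_M)=(\det E,\det h)$, which already packages the projection-formula step and the higher direct image vanishing you spell out, yielding directly
\[
h^q(X, V\otimes S^m\calE\otimes\det \calE(S^mh\otimes\det h))
= h^q(\PP(E^*),K_{\PP(E^*)/X}\otimes\pi^*V\otimes L_E^{m+r}\otimes\calI(h_L^{m+r})),
\]
which is exactly your $H^q(\PP(E^*),F\otimes L_E^{m+r}\otimes\calI(h_L^{m+r}))$ with $F=K_{\PP(E^*)/X}\otimes\pi^*V$.
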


	Finally, based on Theorem \ref{Isom thm'} and Darvas-Xia's \cite{DX21} asymptotic formula (Theorem \ref{DX asymptotic}) for the volumes of pseudo-effective line bundles, we obtain the following asymptotic formula for Griffiths positive vector bundles.

\begin{corollary}[Theorem \ref{Darvas-Xia type}]
	Let $X$ be a compact K\"ahler  manifold and $(E,h)\ge_\Grif 0$.
	Then we have
	\begin{align*}
	&\ \lim_{m\rightarrow\infty}\frac{1}{m^{n+r-1}}
	h^0(X,S^m\calE\otimes\det \calE(S^mh\otimes\det h))\\
	=\ &\ \frac{1}{(n+r-1)!}\int_{\PP(E^*)} (\cur_{(L,P[h_L]_\calI)})^{n+r-1}.
	\end{align*}
	where $P[h_L]_\calI\in\psh(X,L)$ is the so-called $\calI$-model envelope of positive metric $h_L$ $($see \textup{Section \ref{Singular holomorphic Morse inequalities for vector bundles}}$)$
	and $(\cur_{(L,P[h_L]_\calI)})^{n+r-1}$ is the Monge-Amp\`ere measure of $P[h_L]_\calI$ $($see \textup{Section \ref{Nadel-Cao type vanishing theorem}}$)$.
\end{corollary}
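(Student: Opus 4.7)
The plan is to transport the question to the projective bundle $\pi\colon\PP(E^*)\to X$ via Theorem \ref{Isom thm'}~(i) and then apply the Darvas--Xia asymptotic volume formula (Theorem \ref{DX asymptotic}) to the pseudo-effective line bundle $L_E$ with the induced singular Hermitian metric $h_L$. Since $X$ is compact K\"ahler and $\PP(E^*)\to X$ is a smooth projective bundle, $\PP(E^*)$ carries a K\"ahler metric of the form $\pi^*\omega+\eps\,\omega_\FS$, so Theorem \ref{DX asymptotic} is available there. Griffiths semi-positivity of $(E,h)$ passes to $(L_E,h_L)$ by Proposition \ref{pro posi}, so $L_E$ is pseudo-effective and the envelope $P[h_L]_\calI$ is well defined on $\PP(E^*)$.

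Concretely, taking $q=0$ in Theorem \ref{Isom thm'}~(i) gives the identification
\[
H^0\bigl(X, K_X\otimes S^m\calE\otimes\det\calE(S^mh\otimes\det h)\bigr)\simeq H^0\bigl(\PP(E^*), K_{\PP(E^*)}\otimes L_E^{m+r}\otimes\calI(h_L^{m+r})\bigr).
\]
Applying Theorem \ref{DX asymptotic} on $\PP(E^*)$, of dimension $n+r-1$, to the pair $(L_E,h_L)$, together with the elementary estimate $(m+r)^{n+r-1}=m^{n+r-1}\bigl(1+O(1/m)\bigr)$, yields
\[
\lim_{m\to\infty}\frac{1}{m^{n+r-1}}h^0\bigl(\PP(E^*),L_E^{m+r}\otimes\calI(h_L^{m+r})\bigr)=\frac{1}{(n+r-1)!}\int_{\PP(E^*)}\bigl(\cur_{(L_E,P[h_L]_\calI)}\bigr)^{n+r-1}.
\]
Matching this with the statement of the corollary then amounts to absorbing the fixed twist $K_{\PP(E^*)}$ on the right of the isomorphism, or equivalently the corresponding $K_X$ on the left.

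The main obstacle is this last matching step: checking that tensoring by the fixed line bundle $K_{\PP(E^*)}$, or equivalently by $\pi^*K_X\otimes L_E^{-r}\otimes\pi^*\det E$ via the Kobayashi--Ochiai relation \eqref{formula K/X}, perturbs $h^0\bigl(\PP(E^*),L_E^m\otimes\calI(mh_L)\bigr)$ only by $o(m^{n+r-1})$. The cleanest route is to invoke Theorem \ref{DX asymptotic} in the form that accommodates an auxiliary fixed line bundle, so that the volume-type asymptotic is unaffected by such a twist; alternatively one may filter $K_{\PP(E^*)}$ by line bundles and control the discrepancy term by term using a Nadel-type vanishing at high twist. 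Once this bookkeeping is in place, the statement assembles from Theorem \ref{Isom thm'}~(i), the Kobayashi--Ochiai formula \eqref{formula K/X}, and Theorem \ref{DX asymptotic}.
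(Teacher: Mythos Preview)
Your strategy is the paper's: pass to $\PP(E^*)$ via a Le Potier-type isomorphism and then apply Theorem~\ref{DX asymptotic} with an auxiliary fixed twist $T$. The one place where you drift is in the choice of isomorphism. Theorem~\ref{Isom thm'}(i) carries a built-in $K_X$ on the $X$ side, so what it computes is the asymptotic of $h^0(X,K_X\otimes S^m\calE\otimes\det\calE(\cdots))$, not of $h^0(X,S^m\calE\otimes\det\calE(\cdots))$. Your proposed remedy---absorbing $K_{\PP(E^*)}$ into the $T$-slot of Theorem~\ref{DX asymptotic}---operates on the $\PP(E^*)$ side and therefore still delivers the $K_X$-twisted quantity on $X$; it does not by itself remove the $K_X$.

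The paper sidesteps this entirely by invoking the more flexible Theorem~\ref{thm twist iso} with $W=\calO_X$ and $(M,h_M)=(\det E,\det h)$, which gives directly
\[
h^0\bigl(X, S^m\calE\otimes\det\calE(S^mh\otimes\det h)\bigr)\;=\;h^0\bigl(\PP(E^*),\,K_{\PP(E^*)/X}\otimes L_E^{m+r}\otimes\calI(h_L^{m+r})\bigr).
\]
Now Theorem~\ref{DX asymptotic}, applied on the compact K\"ahler manifold $\PP(E^*)$ with the rank-one twist $T=K_{\PP(E^*)/X}$, yields the stated limit immediately (together with the harmless replacement $(m+r)^{n+r-1}\sim m^{n+r-1}$ that you already noted). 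So the ``obstacle'' you identify dissolves once you cite the right form of the isomorphism; no filtering of $K_{\PP(E^*)}$ or auxiliary Nadel-type bookkeeping is required.
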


In particular, we obtain an asymptotic inequality for  vector bundles with Griffiths positive singular Hermitian metrics on compact manifolds, which can be viewed as a generalization of the classical result of big line bundles.

\begin{theorem}[Theorem \ref{thm nonvan}]
	Let  $(E, h)\ge_\Grif 0$  be a holomorphic vector bundle of rank $r$ over a compact $($possibly non-K\"ahler$)$ manifold $X$ of dimension $n$. Then we have	
 \begin{align*}
	\lim_{m\rightarrow\infty}\frac{1}{m^{n+r-1}}
	h^0(X,S^m\calE(S^mh))
	\geq\frac{1}{(n+r-1)!}\int_{\PP(E^*)} \left(\cur_{(L_E,h_L)}\right)^{n+r-1}.
\end{align*}
\end{theorem}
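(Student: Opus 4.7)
The plan is to reduce the estimate on $X$ to an asymptotic lower bound for spaces of sections of the tautological line bundle on $\PP(E^*)$, and then to invoke a singular holomorphic Morse-type inequality there.

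First, I would use the Liu--Sun--Yang direct image formula (\ref{11111}), which holds under the Griffiths semi-positivity of $(E,h)$. Taking global sections and applying the identity
\[
K_{\PP(E^*)/X}\otimes L_E^{r}\otimes \pi^*\det E^*\simeq \calO_{\PP(E^*)}
\]
read off from (\ref{formula K/X}), one obtains
\[
H^0\bigl(X,\, S^m\calE(S^m h)\bigr)\;\simeq\; H^0\bigl(\PP(E^*),\; L_E^m \otimes \calI(h_L^{m+r} \otimes \pi^*\det h^*)\bigr).
\]

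Second, since the factor $\pi^*\det h^*$ is independent of $m$, comparison with a smooth reference metric on $\det E^*$ shows that $\calI(h_L^{m+r}\otimes \pi^*\det h^*)$ differs from $\calI(h_L^m)$ only through a fixed psh weight pulled back from $X$, whose contribution to the space of global sections is of order $o(m^{n+r-1})$. It is therefore enough to prove the line-bundle estimate
\[
\liminf_{m\to\infty} \frac{h^0\bigl(\PP(E^*),\, L_E^m \otimes \calI(h_L^m)\bigr)}{m^{n+r-1}} \;\ge\; \frac{1}{(n+r-1)!}\int_{\PP(E^*)}(\cur_{(L_E,h_L)})^{n+r-1},
\]
with the right-hand side read as the non-pluripolar Monge--Amp\`ere mass.

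Third, I would establish this inequality on the $(n+r-1)$-dimensional compact complex manifold $\PP(E^*)$ in two steps. In the K\"ahler case, the bound is immediate from the Darvas--Xia asymptotic formula (Theorem \ref{DX asymptotic}) together with the fact that the $\calI$-model envelope $P[h_L]_\calI$ is less singular than $h_L$ and its non-pluripolar Monge--Amp\`ere mass dominates that of $h_L$. For the possibly non-K\"ahler $\PP(E^*)$ considered here, I would instead approximate $h_L$ by a sequence of metrics $h_{L,\nu}$ with analytic singularities via Demailly's regularization, apply Bonavero's singular holomorphic Morse inequalities to each $h_{L,\nu}$ (which are valid on any compact complex manifold), and pass to the limit $\nu\to\infty$, using that the non-pluripolar Monge--Amp\`ere mass is continuous along this approximation.

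The main difficulty is the third step in the non-K\"ahler setting, where Darvas--Xia is not directly available: one must organize Demailly's approximation $h_{L,\nu}$ so that the associated multiplier ideals are comparable to $\calI(h_L^m)$ in the right direction, and so that the Monge--Amp\`ere masses of $h_{L,\nu}$ converge to the non-pluripolar mass of $\cur_{(L_E,h_L)}$. Here the relative ampleness of $L_E$ over $X$ and the Griffiths semi-positivity of $h$ should play a crucial role, ensuring both the existence of enough sections along the fibers $\pi^{-1}(x)\simeq\PP^{r-1}$ and the convergence of masses in the limit.
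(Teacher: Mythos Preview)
Your first step (using the direct-image formula to pass to $\PP(E^*)$) matches the paper. After that, there are two genuine gaps.

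\textbf{Step 2 is overcomplicated and its claim is unjustified.} You assert that $\calI(h_L^{m+r}\otimes\pi^*\det h^*)$ differs from $\calI(h_L^m)$ by an $o(m^{n+r-1})$ contribution coming from ``a fixed psh weight pulled back from $X$''. But the weight of $\pi^*\det h^*$ is \emph{minus} a psh function (since $(\det E,\det h)\ge_\Grif 0$), so your description is off and the asymptotic claim is not established. The paper avoids this entirely: because the psh weight $\psi$ of $\det h$ is locally bounded above, $e^{\pi^*\psi}$ is locally bounded, hence
\[
\calI(h_L^{m+r})\ \subseteq\ \calI(h_L^{m+r}\otimes\pi^*\det h^*),
\]
and therefore $h^0$ with the latter ideal dominates $h^0$ with the former. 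One simply drops $\pi^*\det h^*$ at the cost of an inequality (in the right direction) and works with $L_E^{m+r}\otimes\calI(h_L^{m+r})$ from then on.

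\textbf{The non-K\"ahler difficulty you flag is bypassed, not solved, in the paper.} Your plan to run Demailly approximation and Bonavero directly on a possibly non-K\"ahler $\PP(E^*)$, and then control the limit of Monge--Amp\`ere masses, is exactly where things would fail: the convergence of non-pluripolar masses along quasi-equisingular approximations and the Darvas--Xia formula are K\"ahler results, and you offer no substitute. The paper's key observation is that one may assume the right-hand side is strictly positive (otherwise there is nothing to prove); then $L_E$ is big, so $\PP(E^*)$ is Moishezon and admits a K\"ahler modification $\mu:\tilde P\to\PP(E^*)$ (Lemma~\ref{lem kahler mod}). Using invariance of $H^q(K\otimes F\otimes\calI(h_F))$ under K\"ahler modifications (Lemma~\ref{lem isom under mod}), one transfers the $h^0$ computation to $\tilde P$, applies Darvas--Xia there, and concludes via the monotonicity $\int(\cur_{P[\mu^*h_L]_\calI})^{n+r-1}\ge\int(\cur_{\mu^*h_L})^{n+r-1}$. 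This reduction to a K\"ahler manifold is the missing idea in your proposal.
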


This article is organized as follows.
\begin{itemize}
	\item In Section \ref{Preliminaries},we revisit fundamental concepts regarding the positivity of singular Hermitian metrics and $L^2$-metrics on the direct image sheaves.
	\item In Section \ref{Proof of Main Theorems}, we present a proof of our main Theorem \ref{Isom thm}.
	\item In Section \ref{Applications}, we explore various applications from Theorem \ref{Isom thm} via connecting with results of pseudo-effective line bundles.
\end{itemize}

\section{Preliminaries}
\label{Preliminaries}

The primary aim of this preliminary section is to provide a comprehensive overview of the fundamental theory concerning the positivity of metrics on holomorphic vector bundles and Berndtsson's $L^2$-metrics. Additionally, we revisit classical results in complex analytic geometry, crucial for our proof.\subsection{Notation and convention}

\begin{itemize}
	\item Throughout the paper, unless explicitly specified otherwise, all metrics are considered singular Hermitian metrics.
    \item We fix $X$ as a complex manifold of dimension $n$ and $E$ as a holomorphic vector bundle of rank $r$ on $X$.
    When we say $(X,\omega)$ is a complex (resp. Hermitian, K\"ahler) manifold, it means that $\omega $ is a smooth (resp. Hermitian, K\"ahler) metric on $X$.

    \item For the convenience of writing, we do not differentiate between the Hermitian metrics of line bundles and its (global) metric weights.
    \item To maintain consistency in notation, we denote metrics on holomorphic vector bundles as $h$, metrics of holomorphic line bundles as $\varphi$, and the $L^2$-metric of $E$ induced by the metric $\varphi$ of $L_E$ as $h_\varphi$.
    \item We consistently use $L_E$ to represent the tautological line bundle $\mathcal{O}_E(1)$ of $E$ over $\mathbb{P}(E^*)$.
    \item The metric of $L_E$ induced by the metric $h$ is denoted as $h_L$. Additionally, in cases where no confusion arises, we use $\varphi_h$ for the metric weight of $h_L$.
\end{itemize}

\subsection{$L^2$-metrics on the direct image sheaves}

Recall some notations from \cite[\S 2.1]{Bern15}.

Let $F$ be a holomorphic line bundle over a complex manifold $Y$ of dimension $\ell$ and $\{U_j\}$ be an open covering of the manifold such that $F$ is  trivial on each $U_j$.
A section $s$ of $F$ consists of complex-valued functions $s_j$ on $U_j$ satisfying $s_j = g_{jk}s_k$, where $g_{jk}$ represents the transition functions of the bundle.
A metric (weight) is a set of real-valued functions $\psi^j$ on $U_j$  such that the globally well-defined expression
\begin{equation*}
	|s|^2_\psi
	:=|s|^2e^{-\psi}
	:=|s_j|^2 e^{-\psi^j}
\end{equation*}
holds.
We denote $\psi$ to represent the collection $\psi^j$.

A metric $\psi$ on $F$ induces an $L^2$-metric on the adjoint bundle $K_Y\otimes F$.
A section $\xi$ of $K_Y\otimes F$ can be locally expressed as
\begin{equation*}
	\xi=dz\otimes s,
\end{equation*}
where $dz=dz^1\wedge\cdots\wedge dz^\ell$ for some local coordinate of $Y$ and $s$ is a local section of $F$.
We define
\begin{equation*}
	|\xi|^2e^{-\psi}
	:=c_n dz\wedge d\bar z |s|^2_\psi,
\end{equation*}
which represents a (global) volume form on $Y$.
The $L^2$-norm of $\xi$ is given by
\begin{equation*}
	\|\xi\|^2
	:=\int_Y|\xi|^2e^{-\psi}.
\end{equation*}
It is important to note that the $L^2$-norm only depends on the metric $\psi$ on $F$, but does not involve any choice of metrics on the manifold $Y$.

Consider a holomorphic proper fibration $\pi:Y\rightarrow X$ between complex manifolds $X$ and $Y$, and a holomorphic line bundle $F$ on $Y$ with a Griffiths semi-positive smooth metric $\varphi$.
Assume that
 $\pi_{*}(K_{Y/X}\otimes F)$ is locally free and thus $(\pi_{*}(K_{Y/X}\otimes F))_x=H^0(Y_x,K_{Y_x}\otimes F|_{Y_x})$. Berndtsson introduced an $L^2$-metric $h_\varphi$ on $\pi_{*}(K_{Y/X}\otimes F)$ by defining
\begin{equation*}
	|u|^2_{h_\varphi}
	:=\int_{Y_x}|u|^2e^{-\varphi},
\end{equation*}
where $u\in (\pi_{*}(K_{Y/X}\otimes F))_x$ and $Y_x=\pi^{-1}(x)$.
\begin{theorem}[\cite{Bern09}]\label{thm ber09}
  Assume that $h_F$ is smooth positive, then the  $L^2$-metric on $\pi_{*}(K_{Y/X}\otimes F)$ is Nakano positive.
\end{theorem}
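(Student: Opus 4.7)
The plan is to prove Berndtsson's curvature formula for the $L^{2}$-metric and deduce Nakano positivity from it. Work locally on $X$: shrink to a coordinate polydisc $U\subset X$ with coordinates $t=(t^{1},\ldots,t^{m})$, fix a point $x_{0}\in U$, and choose $m$ local holomorphic sections $u_{1},\ldots,u_{m}$ of $\pi_{*}(K_{Y/X}\otimes F)$ which at $x_{0}$ form an $h_{\varphi}$-orthonormal frame that is also ``second-order minimal,'' i.e.\ whose first partials in $t$ vanish at $x_{0}$ in the Chern connection of $h_{\varphi}$. Nakano positivity of $\Theta_{h_\varphi}$ at $x_0$ reduces to showing
\[
\sum_{j,k}\Big(\tfrac{\partial^{2}}{\partial t^{j}\partial\bar t^{k}}(u_{j},u_{k})_{h_{\varphi}}\Big)(x_{0})>0
\]
for any nonzero $(u_{1},\ldots,u_{m})$ of the above type.

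Next I would introduce a horizontal lift of $\partial/\partial t^{j}$ to $Y$ using the relative metric $\varphi$: i.e., extend each fiberwise section $u_{j}(t)\in H^{0}(Y_{t},K_{Y_{t}}\otimes F)$ to an honest section $\tilde u_{j}$ of $K_{Y/X}\otimes F$ on $\pi^{-1}(U)$, chosen so that the vertical part of $\bar\partial \tilde u_{j}$ is pointwise $L^{2}$-minimal on each fiber. Writing $\tilde u_{j}=u_{j}\wedge dt^{1}\wedge\cdots\wedge dt^{m}$ in local frames, the $t$-dependence of the integrand $|\tilde u_{j}|^{2}e^{-\varphi}$ can then be differentiated under the integral sign. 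A direct computation (using the fact that the fibers are compact, so there are no boundary terms, together with the vanishing of the Chern connection terms of $u_j$ at $x_0$) produces the identity
\[
\tfrac{\partial^{2}}{\partial t^{j}\partial\bar t^{k}}(u_{j},u_{k})_{h_{\varphi}}(x_{0})
=\int_{Y_{x_{0}}}c_{jk}(\varphi)\,(u_{j},u_{k})_{\varphi}
-\bigl\langle(\bar\partial\tilde u_{j})_{\rm min},(\bar\partial\tilde u_{k})_{\rm min}\bigr\rangle_{Y_{x_0}},
\]
where $c_{jk}(\varphi)$ is the component of the horizontal part of the curvature $i\Theta_{F,\varphi}$ paired with $\partial_{j}\otimes\bar\partial_{k}$, and the ``min'' subscript denotes the vertical-minimal representative of $\bar\partial\tilde u_{j}$.

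The final step is to absorb the negative Hermitian form $-\langle(\bar\partial\tilde u_{j})_{\rm min},(\bar\partial\tilde u_{k})_{\rm min}\rangle$ using the fiberwise H\"ormander $L^{2}$-estimate. Since $\varphi$ is assumed smooth and strictly positive, $i\Theta_{F,\varphi}|_{Y_{x_{0}}}$ is a strictly positive $(1,1)$-form on each fiber, so H\"ormander's theorem on $(Y_{x_{0}},\varphi)$ gives a bound
\[
\|(\bar\partial\tilde u_{j})_{\rm min}\|_{Y_{x_0}}^{2}\leq \int_{Y_{x_0}}\bigl\langle[i\Theta_{F,\varphi}]^{-1}\bar\partial\tilde u_{j},\bar\partial\tilde u_{j}\bigr\rangle e^{-\varphi}.
\]
Combining this with the Cauchy–Schwarz form of the above sum yields that the total Hermitian form $\Theta_{h_{\varphi}}$ is bounded below by the pushforward of $i\Theta_{F,\varphi}$ restricted to horizontal lifts, which is strictly positive in the Nakano sense whenever $\varphi$ is strictly positive.

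The main obstacle is the curvature-formula computation: the integration by parts on the total space requires careful handling of the Lie derivatives of $e^{-\varphi}$ along the (non-holomorphic) horizontal lift, and the identification of the correct ``minimal'' representative in the fiberwise Hodge decomposition. Once this formula is established, the application of H\"ormander's estimate to extract strict Nakano positivity is comparatively routine.
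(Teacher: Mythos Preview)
The paper does not prove this theorem: it is stated with a citation to Berndtsson's original paper \cite{Bern09} and used as a black box (see also Theorem~\ref{thm lsy} immediately following, which is likewise only cited). So there is no ``paper's own proof'' to compare against.

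That said, your sketch does follow the broad architecture of Berndtsson's original argument---differentiate the $L^{2}$-inner product under the integral, express the result as a curvature term minus a norm term coming from the $\bar\partial$ of a smooth extension, then dominate the negative piece via the fiberwise H\"ormander estimate. Two points would need to be sharpened if you actually carried this out. First, the displayed identity for $\partial_{t^{j}}\partial_{\bar t^{k}}(u_{j},u_{k})_{h_{\varphi}}$ is not the formula one obtains: the genuine computation produces additional terms (involving the fiberwise primitive of $\bar\partial\tilde u_{j}$ and the Kodaira--Spencer forms), and the ``minimal representative'' that appears is the $L^{2}$-minimal solution of a $\bar\partial$-equation on the fiber, not simply the vertical part of $\bar\partial\tilde u_{j}$. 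Second, the final step is not literally an absorption by Cauchy--Schwarz: rather, the H\"ormander estimate bounds the norm of the minimal solution by the curvature-weighted norm of the data, and it is the \emph{combination} of this bound with the positive curvature term that yields the Nakano form, after one recognizes that the data in the H\"ormander estimate is exactly the contraction of $i\Theta_{F,\varphi}$ with the horizontal lifts. Your last paragraph correctly flags that the curvature computation is the delicate part; the rest is essentially as you describe.
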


Now, let us assume $h$ to be a Griffiths positive singular Hermitian metric on $E$.
Then $h$ induces a singular metric $h_L$ on $L_E$.
Consider a coordinate $(z_1,\ldots,z_n)$ around $x\in X$ and any holomorphic frame $e=\{e_1,\ldots, e_r\}$ of $E$.
The dual frame is $e^*=\{e^*_1,\ldots, e^*_r\}$ of $E^*$.
Let $W^1,\ldots, W^r$ be the coordinate of $E^*$ with respect to $e^*$.
The homogeneous coordinate is represented as $[W^1:\cdots: W^r]$.
On $\{W^1\not=0\}$,  $w^j:=W^j/W^1,j=2,\ldots,n$ is a local coordinate along fibers.

For any $a\in \PP(E^*_x)\backslash\{W_1=0\}$,  $(z_1,\ldots,z_n,w^2,\ldots,w^r)$ defines a local coordinate on $\PP(E^*)$ near $a$.
We can define a local non-vanishing holomorphic section of $\mcal{O}_{E}(-1)=\mcal{O}_{E}(1)^*\subset \pi^*E^*$ over ${\bb P(E^*)}$ as
\[ s:(z,w)\mapsto \sum_{1\le \alpha\le r} w^\alpha e^*_\alpha
=e^*_1+\sum_{2\le \alpha\le r} w^\alpha e^*_\alpha.\]
The metric $h^*_L$ on $\mathcal{O}_{E}(-1)$ induced by $h$ is given by
\[ |s|^2_{h^*_L}=\sum_{1\le \alpha,\beta\le r} w^\alpha \overline{w^\beta} \left<e^*_\alpha,e^*_\beta\right>_{h^*}
=\sum_{1\le \alpha,\beta\le r} w^\alpha \overline{w^\beta} h^*_{\alpha\beta}.\]
Hence the metric $h_L$ on $L_E$ is obtained as
\begin{equation}
\label{induced metric h_L}
h_L=\frac{1}{\sum_{1\le \alpha,\beta\le r} w^\alpha \overline{w^\beta} h^*_{\alpha\beta}}.
\end{equation}

Additionally, let $h$ be smooth and $\{e_\alpha\}$ be a normal frame of $E$ at $x\in X$. Then
the curvature of $h$ at $x$ is given by
\begin{equation*}
	\Theta_{(E,h)}=c_{ij\alpha\beta}dz^i\wedge d\bar z^j\otimes e^*_\alpha\otimes e_\beta ,
\end{equation*}
satisfying $\bar c_{ij\alpha\beta}=c_{ji\beta\alpha}$.

At any point $b\in\PP(E^*_x)$ represented by a vector $\sum b_\alpha e^*_\alpha$ of norm $1$, then the curvature of $h_L$ is derived as
\begin{flalign}
	\Theta_{(L_E,h_L)}(b)
	=&\sum c_{ij\beta\alpha }b_\alpha\bar{b_\beta} dz^i\wedge d\bar z^j +\sum dw^\lambda\wedge d\bar w^\lambda \nonumber \\
	=&\langle -\Theta(h^*)b,b\rangle
	+\sum dw^\lambda\wedge d\bar w^\lambda.
	\label{curvature of LE normal}
\end{flalign}

\begin{theorem}[\cite{LSY13}]\label{thm lsy}
 Assume that  $h$ is a Griffiths positive smooth Hermitian metric on $E$. Take $Y=\PP(E^*)$ and
 \[(F,h_F)=(L_E^{m+r}\pi^*\det E^*,h_L^{m+r}\otimes\pi^*\det E^*),\]
 Then the $L^2$-metric induced by $h_F$ on
 \[\pi_{*}(K_{Y/X}\otimes F)=S^mE\] is exactly a constant multiple of $S^mh$.
\end{theorem}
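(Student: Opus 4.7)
The plan is to reduce the identity to a pointwise statement at each $x\in X$, trivialize all the relevant structures on the fiber $Y_x$ by using a normal frame of $(E,h)$ at $x$, and then invoke the uniqueness of $U(r)$-invariant Hermitian forms on the irreducible representation $S^m\CC^r$ to finish the argument.

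First I would fix $x\in X$ and pick a normal frame $\{e_\alpha\}$ of $(E,h)$ at $x$, so that $h_{\alpha\beta}(x)=\delta_{\alpha\beta}$ and in particular $(\det h^*)(x)=1$. By the explicit formula $(\ref{induced metric h_L})$, the restriction $h_L|_{Y_x}$ becomes the standard Fubini-Study weight $|W|^{-2}$ on the homogeneous coordinates $W^1,\ldots,W^r$ of $E_x^*$; the accompanying factor $\pi^*\det h^*$ contributes only the constant $1$ at $x$, so the full weight of $h_F$ on $Y_x$ collapses to the manifestly $U(r)$-invariant expression $|W|^{-2(m+r)}$.

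Next I would make the identification $(\pi_*(K_{Y/X}\otimes F))_x\simeq H^0(Y_x,K_{Y_x}\otimes F|_{Y_x})\simeq S^mE_x$ completely explicit. Using $K_{Y_x}=L_E^{-r}|_{Y_x}$ from $(\ref{formula K/X})$ together with the triviality of $\det E^*|_{Y_x}$, one has $K_{Y_x}\otimes F|_{Y_x}=\mathcal{O}_{\PP^{r-1}}(m)$, and a section corresponds to a degree-$m$ homogeneous polynomial $P_u(W)$ in $W^1,\ldots,W^r$; this polynomial is exactly the image of $u\in S^mE_x$ under the canonical isomorphism $S^mE_x\simeq H^0(\PP(E_x^*),\mathcal{O}(m))$. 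After unwinding the volume form obtained from the canonical pairing along the fiber and the $|W|^{2r}$ factor coming from $K_{Y_x}=L_E^{-r}|_{Y_x}$, the $L^2$-integral defining $|u|^2_{h_\varphi}$ reduces, at the point $x$, to the classical $U(r)$-invariant expression
\[
\int_{\PP^{r-1}} \frac{|P_u(W)|^2}{|W|^{2(m+r)}}\,\omega_{\FS}^{r-1}
\]
up to a fixed constant depending only on $r$.

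The crucial observation is now that both $h_\varphi(x)$ and $(S^mh)(x)$ are Hermitian inner products on $S^mE_x$ invariant under the $U(r)$-action induced by unitary changes of the frame $\{e_\alpha\}$. Since $S^m\CC^r$ is an irreducible $U(r)$-representation, any two $U(r)$-invariant Hermitian inner products on it are proportional by a positive scalar, so $h_\varphi(x)=c(x)\,S^mh(x)$; and because the reduction to the model integral on $\PP^{r-1}$ is the same at every point, $c(x)$ depends only on $m$ and $r$, yielding the claim. The main obstacle I expect is the bookkeeping in the preceding step: one has to verify that the twist by $\pi^*\det E^*$, the adjunction factor from $K_{Y/X}$ and the power $h_L^{m+r}$ combine precisely so that the residual weight on $\PP^{r-1}$ is the $U(r)$-invariant Fubini-Study weight. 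Once that is in place the representation-theoretic step is immediate, and the explicit constant $c(m,r)$ can if desired be extracted from the classical beta-function formula for the $L^2$-norms of monomials, but is not needed for the statement.
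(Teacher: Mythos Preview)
The paper does not give its own proof of this statement; it is quoted from \cite{LSY13} and used as a black box (see the proof of Proposition~\ref{isom of sheaves}, where the smooth case is invoked via Theorem~\ref{thm lsy} and then a monotone limit extends the identity to singular $h$). There is therefore nothing in the paper to compare your argument against directly.

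Your argument is correct. Fixing $x$ and passing to a normal frame of $(E,h)$ at $x$ reduces both the $L^2$-metric $h_\varphi(x)$ and $S^mh(x)$ to $U(r)$-invariant Hermitian forms on $S^m\CC^r$; irreducibility of this representation together with Schur's lemma forces proportionality, and the constant is independent of $x$ because the model fiber integral on $\PP^{r-1}$ is literally the same at every point. The bookkeeping you flag is genuine but routine: on the fiber $Y_x$ the twist $\pi^*\det E^*$ is trivial, and the power $m+r$ in $h_L^{m+r}$ combines with $K_{Y_x}=\mathcal{O}_{\PP^{r-1}}(-r)$ to leave precisely $\mathcal{O}(m)$ with the $U(r)$-invariant Fubini--Study weight. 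This representation-theoretic shortcut is more conceptual than a direct coordinate computation of the integral; the only trade-off is that it does not produce the explicit value of $C_{m,r}$, which as you note is not required for the statement.
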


We will establish the validity of the above theorem for a Griffiths semi-positive singular Hermitian metric $h$ (refer to Section \ref{Proof of Theorem ref isom thm}).

Finally, let us briefly introduce the results of the $L^2$ metric on direct image sheaves in the singular setting. For a projective fibration between compact complex manifold, Berndtsson-P\u{a}un in \cite{BP} showed  the Griffiths semi-positivity of the $L^2$-metric on the direct image bundle.
Through the optimal $L^2$ extension theorem presented by B{\l}ocki in \cite{Blocki13} and Guan-Zhou in \cite{GZ}, Hacon-Popa-Schnell in \cite{HPS18} expanded Berndtsson-P\u{a}un's result to encompass the direct image sheaf .
Zhou-Zhu in \cite{ZZ math ann} extended their work to K\"ahler case.
Recently,
Zou \cite{Zou21} and  Watanabe \cite{watanabe} improved the positivity of such canonical singular metrics.
For more insights into these results, we refer the readers to \cite[etc]{LY14, T16, PT18, DNWZ20}.

\subsection{Some results used in the proof}
In this subsection, we review several classical results, which are used in the proof of main theorems and applications.

\begin{proposition}[\cite{BP, Rau15, PT18}]
\label{prop lowerbound}
	If $(E,h)\ge_\Grif 0$, then locally there exists a family of  Griffiths positive smooth Hermitian metrics $h_v$ such that $h_v$ increasingly converges to $h$.
\end{proposition}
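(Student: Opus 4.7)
The plan is to construct $h_\nu$ by convolving the dual metric $h^*$ entry-wise with a standard Euclidean mollifier, and then adding a small scalar perturbation to upgrade semi-positivity to strict positivity. Since the question is local, I trivialize $E$ on a coordinate polydisc $U\subset\C^n$ with a fixed frame $e_1,\ldots,e_r$, so that $h$ and $h^*$ become matrix-valued functions $(h_{\alpha\beta})$ and $(h^*_{\alpha\beta})$. Letting $\rho_\epsilon$ be a non-negative standard mollifier supported in the ball of radius $\epsilon$, I define
\[
(h^*_\epsilon)_{\alpha\beta}(z):=(h^*_{\alpha\beta}*\rho_\epsilon)(z)
\]
on the shrunken polydisc $U_\epsilon$, and set $h_\epsilon:=(h^*_\epsilon)^{-1}$, which is smooth.

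The first key step is to show $(E,h_\epsilon)\ge_\Grif 0$ and $h_\epsilon\nearrow h$. For any constant vector $c=(c^1,\ldots,c^r)\in\C^r$, the local holomorphic section $u_c:=\sum c^\alpha e^*_\alpha$ of $E^*$ satisfies
\[
|u_c|^2_{h^*}=\sum_{\alpha,\beta} h^*_{\alpha\beta}(z)\,c^\alpha\overline{c^\beta},
\]
which is plurisubharmonic by the assumption $(E,h)\ge_\Grif 0$. Since convolution with a non-negative kernel preserves plurisubharmonicity, the analogous expression with $h^*_\epsilon$ in place of $h^*$ is also plurisubharmonic in $z$ for every fixed $c$. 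For smooth metrics this constant-vector criterion is equivalent to Griffiths semi-positivity: at any $z_0\in U_\epsilon$, in a normal frame for $h_\epsilon$, one computes $\sqrt{-1}\Theta_{(E,h_\epsilon)}(z_0)=\sqrt{-1}\partial\bar\partial h^*_\epsilon(z_0)$ from $h_\epsilon h^*_\epsilon=I$, so the positivity of this tensor tested against all $c\in\C^r$ is exactly $(E,h_\epsilon)\ge_\Grif 0$. Monotonicity is then the classical fact that $\phi*\rho_\epsilon\searrow\phi$ as $\epsilon\searrow 0$ for plurisubharmonic $\phi$: applied coefficient-wise, this makes the Hermitian matrix $h^*_\epsilon$ decreasing in the Hermitian order, so $h_\epsilon$ increases pointwise to $h$ on every relatively compact subset of $U$.

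To promote semi-positivity to strict Griffiths positivity, I perturb by a scalar factor:
\[
h_\nu:=h_{\epsilon_\nu}\,e^{-\delta_\nu|z|^2},\qquad \epsilon_\nu,\,\delta_\nu\searrow 0.
\]
A direct computation for scalar multiplication of a matrix metric gives $\sqrt{-1}\Theta_{(E,h_\nu)}=\sqrt{-1}\Theta_{(E,h_{\epsilon_\nu})}+\delta_\nu\,\omega_{\textup{Eucl}}\cdot\Id_E$, where $\omega_{\textup{Eucl}}=\sqrt{-1}\sum dz_i\wedge d\bar z_i$. Equivalently, $h_\nu\,e^{\delta_\nu|z|^2}=h_{\epsilon_\nu}\ge_\Grif 0$ and the weight $\psi_\nu:=\delta_\nu|z|^2$ is strictly plurisubharmonic, matching the paper's definition of $(E,h_\nu)>_\Grif 0$. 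Since both factors $h_{\epsilon_\nu}$ and $e^{-\delta_\nu|z|^2}$ increase in $\nu$, the monotonicity is preserved and $h_\nu\nearrow h$ persists in the limit.

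The main obstacle I anticipate is the second-paragraph verification that entry-wise mollification of $h^*$ preserves Griffiths semi-positivity. This does not follow directly by applying mollification to $|u|^2_{h^*}$ for a general holomorphic section $u$, because convolution interacts badly with variable coefficients $u^\alpha(z)$; the crux is the reduction to testing against constant vectors $c\in\C^r$, which already captures the full pointwise curvature condition for smooth metrics through the normal-frame identity $\sqrt{-1}\Theta_{(E,h_\epsilon)}=\sqrt{-1}\partial\bar\partial h^*_\epsilon$. Once this is in place, the remaining monotonicity bookkeeping and the strict-positivity perturbation are routine.
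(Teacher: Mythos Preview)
The paper does not prove this proposition; it cites \cite{BP, Rau15, PT18} and only remarks that the approximation is obtained by convolution with an approximate identity. Your overall strategy---mollify $h^*$ entry-wise in a local trivialization, then perturb by $e^{-\delta|z|^2}$---is exactly the standard one indicated there, and the monotonicity and strict-positivity steps are fine.

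There is, however, a genuine gap in your verification that $h_\epsilon$ is Griffiths semi-positive. The claim that ``for smooth metrics this constant-vector criterion is equivalent to Griffiths semi-positivity'' is false. On the trivial line bundle over a disc, take $g(z)=1+z+\bar z$: then $|c|^2 g(z)$ is pluriharmonic for every constant $c$, yet $\partial_z\partial_{\bar z}\log g=-(1+z+\bar z)^{-2}<0$, so $(E^*,g)$ is Griffiths strictly \emph{positive}, not semi-negative. Your normal-frame identity $\sqrt{-1}\Theta_{(E,h_\epsilon)}(z_0)=\sqrt{-1}\partial\bar\partial h^*_\epsilon(z_0)$ requires a holomorphic, non-constant frame change; after that change the sections that were constant in the original frame are no longer constant, so the constant-vector plurisubharmonicity you established cannot be fed into it.

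The fix is precisely the argument you dismissed. For an arbitrary holomorphic section $u$ of $E^*$,
\[
|u|^2_{h^*_\epsilon}(z)=\int \Bigl(\sum_{\alpha,\beta}h^*_{\alpha\beta}(z-w)\,u^\alpha(z)\overline{u^\beta(z)}\Bigr)\rho_\epsilon(w)\,dw.
\]
For each fixed $w$ the integrand is $|u|^2$ with respect to the translated metric $\tilde h^*:=h^*(\cdot-w)$, which is still Griffiths semi-negative since translation is biholomorphic; hence the integrand is plurisubharmonic in $z$, and averaging against $\rho_\epsilon\ge 0$ preserves this. Thus $|u|^2_{h^*_\epsilon}$ is plurisubharmonic for \emph{every} holomorphic $u$, giving the Griffiths semi-positivity of $h_\epsilon$. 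The point is not that $(|u|^2_{h^*})*\rho_\epsilon$ equals $|u|^2_{h^*_\epsilon}$---it does not---but that $|u|^2_{h^*_\epsilon}$ is nonetheless an average of plurisubharmonic functions indexed by the translation parameter $w$.
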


	In fact, the approximating sequence is obtained through the
	technique of convolution with an approximate identity.
	Hence one immediately yields the following proposition.

\begin{proposition}[\cite{PT18}]
  \label{pro posi}
	Let $(E, h)\ge_\Grif 0$, then
\begin{enumerate}[$(\rm i)$]\setlength{\itemsep}{2ex}
	\item
	$(Q, h_Q)\ge_\Grif 0$, where $(Q, h_Q)$ is a quotient of $(E, h)$,
	\item
	$(S^mE, S^m h)\ge_\Grif 0$ for $m\ge1$,
	\item
	$(\Lambda^m E, \Lambda^m h)\ge_\Grif 0$,
	especially	$(\det E, \det h)\ge_\Grif 0$ for $m\ge1$,
	\item
	$(\pi^*E, \pi^*h)\ge_\Grif 0$, where $\pi:Y\to X$ is a holomorphic morphism.	
\end{enumerate}	
\end{proposition}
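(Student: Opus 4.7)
The plan is to reduce each of (i)--(iv) to the smooth Griffiths positive case via the local approximation $h_v \nearrow h$ furnished by Proposition \ref{prop lowerbound}, and then pass to the monotone limit in the defining condition of Griffiths semi-positivity.

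Part (i) needs no approximation. If $E \twoheadrightarrow Q$ is a holomorphic quotient, dualizing gives an inclusion $Q^* \hookrightarrow E^*$, under which $h_Q^*$ is by definition the restriction $h^*|_{Q^*}$. Hence for any local holomorphic section $v$ of $Q^*$, viewed as a section of $E^*$, the function $|v|^2_{h_Q^*} = |v|^2_{h^*}$ is plurisubharmonic by $(E,h) \ge_\Grif 0$.

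For (ii)--(iv), on a neighborhood of any given point of the base, Proposition \ref{prop lowerbound} provides smooth Griffiths positive metrics $h_v$ with $h_v \nearrow h$ as Hermitian forms. Let $T$ denote any one of the functors $S^m$, $\Lambda^m$, or $\pi^*$. In the smooth category, $(T(E), T(h_v)) \ge_\Grif 0$ is classical: it follows from a direct Chern curvature computation on $\Theta_{(E,h_v)}$, using respectively the tensor/quotient construction (for $S^m$, $\Lambda^m$) and the identity $\Theta_{(\pi^*E,\pi^*h_v)} = \pi^*\Theta_{(E,h_v)}$ (for $\pi^*$), together with the fact that each of these operations preserves Griffiths nonnegativity pointwise. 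Moreover, the construction is compatible with monotonicity: $h_v \le h$ as Hermitian forms yields $T(h_v) \le T(h)$, immediately from the definitions in any local frame (the tensor-product metric is monotone, and the quotient and pullback metrics are obtained fiberwise).

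Dualizing reverses the inequality, so $T(h_v)^* \searrow T(h)^*$, and for any local holomorphic section $u$ of $T(E)^*$, the functions $|u|^2_{T(h_v)^*}$ form a decreasing sequence of plurisubharmonic functions (from the smooth step) converging pointwise to the real-valued function $|u|^2_{T(h)^*}$. Since a decreasing limit of plurisubharmonic functions that is not identically $-\infty$ remains plurisubharmonic, this gives $(T(E), T(h)) \ge_\Grif 0$. The only mildly technical point is checking that $h_v \nearrow h$ is preserved under $S^m$, $\Lambda^m$, and $\pi^*$; this is a straightforward verification in a local frame rather than a genuine obstacle.
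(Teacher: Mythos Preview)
Your proposal is correct and takes essentially the same approach as the paper. The paper itself offers no proof beyond the one-line remark preceding the proposition that the approximating sequence $h_v$ is obtained by convolution with an approximate identity, from which the proposition ``immediately'' follows; your argument is precisely the fleshing-out of that hint, with the pleasant observation that (i) needs no approximation at all since $h_Q^*$ is just the restriction of $h^*$ under $Q^*\hookrightarrow E^*$.
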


\begin{theorem}[R\"uckert's Nullstellensatz, {\cite[Section 3.1.2]{GR84}}]
\label{Ruckert Nullstellensatz}
	Let $\calF$ be a coherent analytic sheaf on a complex space $X$, and let $f\in \calO_X(X)$ be a holomorphic function vanishing on $\supp\calF$.
	Then, for each point $x\in X$, there exist an open neighborhood $U$ of $x$ in $X$ and a positive integer $t$ such that $f^t\calF_U=0$.
\end{theorem}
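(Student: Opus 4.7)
The plan is to reduce this statement for a general coherent sheaf $\calF$ to the classical R\"uckert Nullstellensatz for coherent \emph{ideal} sheaves, which asserts that for a coherent $\calJ \subset \calO_X$ the germ of any holomorphic function vanishing on $V(\calJ)$ lies in $\sqrt{\calJ_x}$. The natural mediating object is the annihilator ideal sheaf $\mathcal{A} := \mathrm{Ann}_{\calO_X}(\calF)$. Coherence of $\calF$ makes $\mathcal{A}$ coherent, because locally $\calF$ is generated by finitely many sections $s_1,\ldots,s_k$, and $\mathcal{A}$ is the intersection of the kernels of the morphisms $\calO_X \to \calF$, $a \mapsto a s_i$, each of which is coherent. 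Moreover, since $\calO_{X,x}$ is local with maximal ideal $\frakm_x$, one checks that $\supp\calF = V(\mathcal{A})$ set-theoretically: $\calF_x = 0$ iff $1 \in \mathcal{A}_x$ iff $\mathcal{A}_x \not\subset \frakm_x$ iff $x \notin V(\mathcal{A})$.

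With this reduction in place, the hypothesis $f|_{\supp\calF} = 0$ becomes $f|_{V(\mathcal{A})} = 0$, and applying the ideal-version Nullstellensatz to $\mathcal{A}$ at the point $x$ gives $f^t_x \in \mathcal{A}_x$ for some positive integer $t$. To promote this stalk-level conclusion to a neighborhood, I would use that $g := f^t$ defines a global section of the coherent quotient sheaf $\calO_X/\mathcal{A}$ whose stalk at $x$ vanishes; coherence guarantees that the zero locus of a section of a coherent sheaf is open, so $g|_U \in \mathcal{A}(U)$ for some open neighborhood $U$ of $x$. By the stalkwise definition of the annihilator, this means $(f^t \cdot \calF)_y = 0$ for every $y \in U$, hence $f^t \cdot \calF|_U = 0$ as a subsheaf of $\calF|_U$, which is the desired conclusion.

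The main obstacle is precisely the ideal-version Nullstellensatz invoked above, whose proof in \cite[Section 3.1.2]{GR84} is genuinely non-trivial. The standard route proceeds through a primary decomposition of $\mathcal{A}_x$ in the Noetherian local ring $\calO_{X,x}$, reducing to the case of a \emph{prime} ideal $\mathfrak{p}$, i.e., an irreducible analytic germ $V(\mathfrak{p})$. For this, the local parametrization theorem for analytic sets furnishes a finite surjective projection $V(\mathfrak{p}) \to (\C^d,0)$ with $d = \dim V(\mathfrak{p})$, realizing the analytic algebra $\calO_{X,x}/\mathfrak{p}$ as a finite integral extension of $\calO_{\C^d,0}$; combined with Weierstrass preparation and the consideration of the minimal polynomial of a generator over this subring, this forces any $f$ vanishing identically on $V(\mathfrak{p})$ to lie in $\mathfrak{p}$. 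Since this is purely classical material in the theory of analytic algebras, I would cite Grauert--Remmert rather than reproduce the argument, and concentrate the exposition on the coherence-and-annihilator reduction outlined above.
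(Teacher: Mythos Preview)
The paper does not supply a proof of this statement; it is recorded as a classical result and attributed to \cite[Section 3.1.2]{GR84}, so there is no in-paper argument to compare against. Your reduction via the annihilator ideal sheaf $\mathcal{A}=\mathrm{Ann}_{\calO_X}(\calF)$, the identification $\supp\calF=V(\mathcal{A})$, and the passage from stalk to neighborhood using coherence of $\calO_X/\mathcal{A}$ is correct and is in fact the standard route taken in Grauert--Remmert; your decision to cite rather than reprove the ideal-version Nullstellensatz matches exactly what the paper does.
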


\begin{theorem}[Remmert's proper mapping theorem, {\cite[Section 10.6.1]{GR84}}]
\label{Remmert's proper mapping theorem}

	For any proper holomorphic map $f: X\to Y$ between complex spaces $X$, $Y$,
	the image set $f(X)$ is an analytic set in $Y$.
\end{theorem}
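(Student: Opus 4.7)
The plan is to prove this locally on $Y$ and reduce to the case of finite maps by induction on the fiber dimension. Since $f$ is proper, $f(X)$ is closed in $Y$, and analyticity is a local property, so I fix $y_0\in f(X)$ and work in a small neighborhood. Let $d=\dim f^{-1}(y_0)$. By the upper semicontinuity of fiber dimension (Cartan--Remmert), after shrinking $Y$ I may assume every fiber has dimension at most $d$, which lets me induct on $d$.

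For the base case $d=0$, the fiber $f^{-1}(y_0)$ is a finite set of points $\{x_1,\dots,x_k\}$, and after shrinking $Y$ further and using properness to separate preimages, the map $f$ decomposes as a disjoint union of finite holomorphic maps on neighborhoods of each $x_j$. Each such finite map has the local structure of a branched cover, whose image is cut out by a Weierstrass polynomial whose coefficients are the elementary symmetric functions of the local branches; hence $f(X)$ is analytic near $y_0$. For the inductive step $d\ge 1$, I choose, near a point of $f^{-1}(y_0)$ lying on a top-dimensional component, a local holomorphic function $g$ that does not vanish identically on that component. The zero set $Z=\{g=0\}$ is an analytic subset of $X$, the restriction $f|_Z$ remains proper, and its maximal fiber dimension at $y_0$ has strictly dropped. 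By properness, finitely many such local hypersurfaces suffice to cover $f^{-1}(y_0)$, and together with a residual piece to be handled separately one writes $f(X)$ locally as a finite union of images of proper maps with strictly smaller fiber dimension; the inductive hypothesis then finishes the argument.

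The main obstacle is the base case: establishing rigorously that the image of a finite holomorphic map is an analytic set demands a careful Weierstrass-preparation and local parametrization argument for branched covers of analytic varieties. A conceptually cleaner alternative that bypasses the induction is to invoke Grauert's direct image coherence theorem: properness gives that $f_*\calO_X$ is a coherent $\calO_Y$-module, and a direct check (using properness for one direction and the fact that stalks contain the constants for the other) shows $f(X)=\supp f_*\calO_X$, so the conclusion follows from the general fact that the support of a coherent analytic sheaf is an analytic subset. Either route is standard, but both deflect the real difficulty to a nontrivial theorem (Weierstrass preparation with local parametrization in the first, Grauert coherence in the second).
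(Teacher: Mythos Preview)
The paper does not supply its own proof of this theorem: it is quoted as a classical result from Grauert--Remmert \cite[Section~10.6.1]{GR84} and used as a black box (specifically in the proof of Lemma~\ref{Direct image vanishing}). So there is no argument in the paper against which to compare your proposal.

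That said, your second route---observing that $f(X)=\supp f_*\calO_X$ and invoking Grauert's coherence theorem for proper direct images---is exactly the derivation given in the cited reference \cite{GR84}, where the proper mapping theorem appears as an immediate corollary of the direct image theorem. Your identification $f(X)=\supp f_*\calO_X$ is correct for the reasons you state, and this approach is entirely rigorous once Grauert's theorem is granted.

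Your first route, the induction on fiber dimension, is closer in spirit to Remmert's original 1957 argument, but as written the inductive step is incomplete. You cover $f^{-1}(y_0)$ by finitely many local hypersurfaces $Z_i$, but covering the \emph{central} fiber does not immediately yield $f(X)=\bigcup_i f(Z_i)$ near $y_0$: nearby fibers need not be contained in $\bigcup_i Z_i$. One needs an additional argument (e.g.\ that the complement has strictly smaller fiber dimension, or a more careful projection/elimination step) to close the induction. You also need $g$ to vanish at the chosen point of $f^{-1}(y_0)$, not merely to be nonconstant, for $Z$ to meet that fiber. These are repairable, and you rightly flag the base case as the substantive analytic input; but as a self-contained sketch the Grauert route you give second is the cleaner and complete one.
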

\begin{theorem}[Grauert's upper semicontinuity theorem, {\cite[Section 10.5.5]{GR84}}]
\label{Grauert's upper semicontinuity}
Let $X$, $Y$ be complex manifolds.  Assume $f:X\to Y$ is a proper submersion. Let $V$ be a holomorphic vector bundle on $X$.
     Then the set $A_{i,d}:=\{y\in Y~|~\dim_\CC H^i(X_y,V_y)\ge d \}$ is an analytic subset of $Y$, where $X_y:=f^{-1}(y)$ and $V_y:=V|_{X_y}$.
	
\end{theorem}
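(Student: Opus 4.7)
The plan is to reduce the family of fiber cohomologies $H^i(X_y, V_y)$ to the cohomology of a bounded complex of finite-rank free $\calO_Y$-modules, and then read off upper semicontinuity from rank drops of the differentials. Since the statement is local on $Y$, I fix a point $y_0 \in Y$ and work in a neighborhood, shrinking $Y$ as needed.

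\textbf{Step 1 (A perfect finite model for $Rf_*V$).} I fix a finite cover of a neighborhood of $f^{-1}(y_0)$ in $X$ by Stein open sets $\mathcal{U} = \{U_\alpha\}$ that trivialize $V$ and consider the associated relative \v{C}ech complex; it computes $R^\bullet f_* V$ but is formed of infinite-dimensional sheaves. Choosing a strictly smaller Stein refinement $\mathcal{U}' \Subset \mathcal{U}$, the restriction maps between \v{C}ech cochains become compact, and the Forster--Knorr/Douady finiteness technique permits me, after shrinking $Y$ around $y_0$, to replace the \v{C}ech complex by a quasi-isomorphic bounded complex
\[ K^\bullet \colon K^0 \xrightarrow{d^0} K^1 \xrightarrow{d^1} \cdots \xrightarrow{d^{N-1}} K^N \]
of finite-rank free $\calO_Y$-modules whose formation commutes with arbitrary base change. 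In particular the natural base-change maps yield isomorphisms
\[ H^i\bigl(K^\bullet \otimes_{\calO_Y} \CC(y)\bigr) \;\cong\; H^i(X_y, V_y) \]
for every $y$ near $y_0$ and every $i \geq 0$. Equivalently, one may invoke Grauert's direct image theorem together with flat base change to manufacture such a perfect complex directly.

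\textbf{Step 2 (Rank argument).} For this $K^\bullet$, the fiber dimension is
\[ \dim_\CC H^i(X_y, V_y) \;=\; \rk K^i \;-\; \rk (d^i)_y \;-\; \rk (d^{i-1})_y, \]
where $(d^i)_y$ denotes the $\CC$-linear map obtained by evaluating the entries of the holomorphic matrix $d^i$ at $y$. The subset $\{y : \rk (d^i)_y \leq k\}$ is cut out by the simultaneous vanishing of all $(k+1)\times (k+1)$ minors of $d^i$ and is therefore analytic. Consequently,
\[ A_{i,d} \;=\; \bigcup_{k_1 + k_2 \,\leq\, \rk K^i - d} \bigl\{\rk (d^{i-1})_y \leq k_1\bigr\} \cap \bigl\{\rk (d^i)_y \leq k_2\bigr\} \]
is a finite union of analytic subsets in the chosen neighborhood of $y_0$, hence analytic; gluing these local analytic descriptions over $y_0 \in Y$ gives the theorem.

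\textbf{Main obstacle.} The substantive content is entirely in Step 1: producing the bounded complex $K^\bullet$ with base change is essentially Grauert's direct image theorem ``with parameters'' and rests on the compactness of \v{C}ech restriction maps together with a Banach-analytic finiteness reduction. Once that analytic input is granted, Step 2 is pure linear algebra on holomorphic matrices.
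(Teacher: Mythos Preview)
The paper does not supply its own proof of this statement: Theorem~\ref{Grauert's upper semicontinuity} is quoted from \cite[Section~10.5.5]{GR84} as a classical input and is used without argument (it is invoked later in the proof of Lemma~\ref{Direct image vanishing}). So there is nothing in the paper to compare your attempt against.

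That said, your outline is the standard route and is correct in substance. Step~1 is exactly the Grauert/Forster--Knorr reduction of $Rf_*V$ to a bounded complex of finite free $\calO_Y$-modules with base change; as you note, this is where the real work lies and amounts to the analytic core of Grauert's direct image theorem. Step~2 is then elementary: the formula $\dim H^i(K^\bullet\otimes\CC(y))=\rk K^i-\rk(d^i)_y-\rk(d^{i-1})_y$ and the fact that rank-drop loci of holomorphic matrices are analytic (minors) give the claim. One small remark on the write-up of Step~2: your decomposition of $A_{i,d}$ as a union over $k_1+k_2\le \rk K^i-d$ is fine, but it is slightly cleaner to observe directly that each function $y\mapsto -\rk(d^j)_y$ is upper semicontinuous in the Zariski/analytic sense, hence so is their sum, and therefore $A_{i,d}$ is closed analytic; this avoids writing out the union.
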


\begin{theorem}[Leray's isomorphism theorem, {\cite[Section 3.5, Theorem 5.5]{K}}]
		\label{Leray thm}
Let $X$ and $Y$ be topological spaces and $\pi:X\rightarrow Y$ be a proper map.
		Given a sheaf $\mathcal{F}$ of abelian groups over $X$, ${R}^q\pi_*\mathcal{F}$ is the higher direct image sheaf over $Y$, defined as the sheafification of presheaf
\begin{equation*}
		U\subset Y\mapsto H^q(\pi^{-1}(U),\mathcal{F}).
		\end{equation*}
		If for some fixed $p$,
		\begin{equation*}
		{R}^q\pi_*\mathcal{F}=0, \quad\mathrm{for\ all}\ q\neq p.
		\end{equation*}
Then for any $i$, there is a natural isomorphism
		\begin{equation*}
		H^i(X,\mathcal{F})\simeq
		H^{i-p}(Y, {R}^p\pi_*\mathcal{F}).
		\end{equation*}
\end{theorem}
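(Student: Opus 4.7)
My plan is to realize the stated isomorphism as the degeneration of the Leray spectral sequence. First I would observe that $\pi^{-1}$ is an exact left adjoint to $\pi_*$, so $\pi_*$ carries injectives to injectives, and hence an injective resolution $\mathcal{F}\to\mathcal{I}^\bullet$ on $X$ pushes forward to a complex of $\Gamma(Y,-)$-acyclic sheaves on $Y$. Combined with the tautology $\Gamma(X,-)=\Gamma(Y,-)\circ\pi_*$, Grothendieck's theorem on the composition of derived functors then produces a convergent first-quadrant spectral sequence
\[
E_2^{i,j}=H^i(Y,R^j\pi_*\mathcal{F})\ \Longrightarrow\ H^{i+j}(X,\mathcal{F}).
\]

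The hypothesis that $R^q\pi_*\mathcal{F}=0$ for every $q\neq p$ forces the $E_2$-page to be concentrated in the single row $j=p$. All higher differentials $d_r$ with $r\ge 2$ have bidegree $(r,1-r)$ and therefore must hit either a zero row or a zero column, so they vanish identically. Hence $E_2=E_\infty$, the induced filtration on $H^i(X,\mathcal{F})$ collapses to one graded piece, and we recover
\[
H^i(X,\mathcal{F})\ \simeq\ E_2^{i-p,\,p}\ =\ H^{i-p}(Y,R^p\pi_*\mathcal{F}),
\]
which is exactly the asserted isomorphism.

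Two points deserve care. First, one should verify that the collapse map coincides with the natural map supplied by the pair of functors; this follows by bookkeeping with the Cartan--Eilenberg resolution of $\pi_*\mathcal{I}^\bullet$ and the two standard filtrations on its total complex. Second, properness of $\pi$ is actually not required for the existence of the Leray spectral sequence, which is a formal consequence of the adjunction; properness only enters when one wants geometric control over the sheaves $R^q\pi_*\mathcal{F}$ (coherence, fibre-wise interpretation, etc.), and such control is not needed for the bare statement.

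There is no genuine obstacle here: the only substantive input is the construction of the Grothendieck composition-of-functors spectral sequence, which I would cite as a black box from homological algebra. If one prefers a concrete derivation, one can replace $\mathcal{F}$ by its Godement flasque resolution $\mathcal{G}^\bullet$, note that $\pi_*\mathcal{G}^\bullet$ remains flasque and hence $\Gamma(Y,-)$-acyclic, take a Cartan--Eilenberg resolution $J^{\bullet,\bullet}$ of $\pi_*\mathcal{G}^\bullet$, and run the two spectral sequences of the double complex $\Gamma(Y,J^{\bullet,\bullet})$; filtering by columns yields the Leray spectral sequence, and the same vanishing argument concludes the proof.
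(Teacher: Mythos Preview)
Your argument via the degeneration of the Leray spectral sequence is correct and is the standard proof of this statement. The paper, however, does not supply a proof at all: Theorem~\ref{Leray thm} is quoted without proof from Kobayashi's textbook \cite[Section~3.5, Theorem~5.5]{K} and used as a black box in the proof of the main isomorphism theorem, so there is no in-paper argument to compare against.
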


\begin{theorem}[{Bott's vanishing theorem, \cite[Section 3.4, Theorem 4.10]{K}}]
\label{Bott vanishing}
	For $p,q\geq0$ and $k\in\ZZ$, then
	\begin{equation*}
		H^q(\PP^n,\Omega_{\PP^n}^p\otimes\calO_{\PP^n}(k))=0
	\end{equation*}
	with the following exceptions:
	\begin{enumerate}[$(\rm i)$]
		\item $p=q$ and $k=0$,
		\item $q=0$ and $k>p$,
		\item $q=n$ and $k<p-n$.
	\end{enumerate}
\end{theorem}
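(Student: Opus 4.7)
The plan is to run an induction on $p$ using the (dualized) Euler sequence on $\PP^n$, reducing everything to the classical cohomology of line bundles $\calO_{\PP^n}(k)$. Recall that $H^q(\PP^n,\calO_{\PP^n}(k))$ is $0$ outside the range $\{q=0,\,k\geq 0\}\cup\{q=n,\,k\leq -n-1\}$, where it has dimensions $\binom{n+k}{n}$ and $\binom{-k-1}{n}$ respectively. The exceptions listed in the statement at $p=0$ are exactly the cases $q=0,k\geq 1$ and $q=n,k\leq -n-1$, together with the ``diagonal'' case $p=q=0,k=0$; so the base case $p=0$ reduces to this classical fact.

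The key tool is the dualized Euler exact sequence
\begin{equation*}
0\to\Omega_{\PP^n}^1\to\calO_{\PP^n}(-1)^{\oplus(n+1)}\to\calO_{\PP^n}\to 0.
\end{equation*}
Taking $p$-th exterior powers (or equivalently, exploiting the Koszul filtration on $\bigwedge^p\calO(-1)^{\oplus(n+1)}$) produces short exact sequences
\begin{equation*}
0\to\Omega_{\PP^n}^p\to\calO_{\PP^n}(-p)^{\oplus\binom{n+1}{p}}\to\Omega_{\PP^n}^{p-1}\to 0.
\end{equation*}
Twisting by $\calO(k)$ and running the long exact sequence in cohomology gives a three-term relation among $H^\bullet(\PP^n,\Omega^p(k))$, $H^\bullet(\PP^n,\calO(k-p))$ and $H^\bullet(\PP^n,\Omega^{p-1}(k))$. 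Inductively assuming Bott's vanishing for $p-1$, and plugging in the known cohomology of $\calO(k-p)$, almost all groups $H^q(\Omega^p(k))$ are immediately zero.

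The main obstacle is bookkeeping at the boundary: one must check that the three exceptional cases in the statement survive the induction, and conversely that no spurious cohomology is created by connecting homomorphisms. Concretely, when $p=q$ and $k=0$ the connecting map $H^{p-1}(\Omega^{p-1})\to H^p(\Omega^p)$ carries the hyperplane class to (a multiple of) its power, reflecting that $H^p(\PP^n,\Omega^p)=\CC$ is generated by $[\omega_{\FS}^p]$; this has to be tracked through the long exact sequence. Similarly the ranges $k>p$ for $q=0$ and $k<p-n$ for $q=n$ are preserved because they come directly from the $H^0$ or $H^n$ of $\calO(k-p)$ in the middle term. Once these boundary contributions are correctly accounted for, the induction closes.

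Finally, an alternative I would keep in reserve is Schneider's K\"unneth-type argument: identify $\PP^n$ as a fiber and decompose $\Omega_{\PP^n}^p$ via a relative cotangent sequence, or use the Borel--Weil--Bott theorem for the homogeneous bundle $\Omega^p(k)$ on $\PP^n=\mathrm{SL}_{n+1}/P$, which reads off the vanishing (and the non-vanishing) directly from a Weyl-group calculation. This would give a uniform representation-theoretic proof but is heavier machinery than the Euler-sequence induction above.
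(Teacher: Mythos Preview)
The paper does not give a proof of this statement: Theorem~\ref{Bott vanishing} is recorded in the preliminaries as a classical result, with a reference to Kobayashi's book \cite[Section~3.4, Theorem~4.10]{K}, and is then simply invoked where needed (in Lemma~\ref{relation of H0} and Lemma~\ref{Vanishing of Hq}). So there is no ``paper's own proof'' to compare against.

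That said, your proposal is a correct and standard route to Bott's vanishing. The induction on $p$ via the short exact sequences
\[
0\to\Omega_{\PP^n}^p\to\calO_{\PP^n}(-p)^{\oplus\binom{n+1}{p}}\to\Omega_{\PP^n}^{p-1}\to 0
\]
(obtained from the dual Euler sequence) twisted by $\calO(k)$ does the job, and your identification of the boundary bookkeeping at $p=q$, $k=0$ is the only place requiring care. The alternative Borel--Weil--Bott argument you mention is also legitimate. Either would serve as a self-contained proof, but for the purposes of this paper none is needed; the result is taken as known.
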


\vskip0.7cm

Now let us recall the definition of numerical dimension of singular metrics and the Kawamata-Viehweg-Nadel type vanishing theorem in \cite{C14}.

Let $(M,h_M)\ge_\Grif 0$ be a holomorphic line bundle over a compact Hermitian manifold $(X,\omega)$.
A sequence $\{h_j\}$ is called a \textit{quasi-equisingular approximation} of $h$ if it satisfies the following conditions:
\begin{enumerate}[(i)]
	\item $-\log h_j$ converges to $-\log h$ in $L^1$ topology and
 \[\cur_{(M,h_j)}\ge \cur_{(M,h_M)}-\delta_j\omega\]
	  for some positive number $\delta_j$ with $\delta_j\rightarrow0$ as $j\rightarrow+\infty$;
	\item all $-\log h_j$ have analytic singularities and $
	-\log h_{j+1}\le -\log h_j+O(1);
	$
	\item for any $\delta>0$ and $m\in\NN$, there exists $j_0=j(\delta,m)\in\NN$ such that
	$
	\calI(h_j^{m(1+\delta)})\subseteq \calI(h_M^m)
	$
	for any $j\geq j_0$.
\end{enumerate}

It is well-known that the existence of the quasi-equisingular sequence is ensured by Demailly's approximation theorem.

For any closed smooth semi-positive $(n-q,n-q)$ form $u$, one defines
\begin{equation}
\label{Cao's definition}
\int_X\left (\cur_{(M,h_M)}\right)^q\wedge u
:=\limsup\limits_{k\rightarrow\infty}\int_X \left(\cur_{(M,h_j)}\right)_{ac}^q\wedge u,
\end{equation}
where $(\cur_{(M,h_j)})_{ac}$ denotes the absolutely continuous part of the current $\cur_{(M,h_j)}$.

In \cite{C14},  when $(X,\omega)$ is a compact K\"ahler manifold, Cao proved that the limsup is independent of the choice of quasi-equisingular approximation and is a limit.

\begin{definition}[\cite{C14}]
	\label{numerical dim} The numerical dimension of $(M,h_M)$ is defined as
	\begin{equation*}
	{\rm nd}(M,h_M)
	:=\max\{q\in \NN~|~  \left(\cur_{(M,h_M)}\right)^q\neq0\}.
\end{equation*}
\end{definition}
By the vanishing theorem obtained by Cao in \cite{C14}  together with the strong openness property of multiplier ideal sheaves obtained by  Guan-Zhou in \cite{GZ15}, one can obtain
\begin{theorem}[\cite{C14,GZ15}]
	\label{K-V-N-C}
	Let $(M,h_M)\ge_\Grif 0$ be a holomorphic line bundle on a compact K\"ahler manifold $(X,\omega)$.
	Then
	\begin{equation*}
	H^q(X,K_X\otimes M\otimes\calI(h_M))=0,\quad\text{for all } q\ge n-{\rm nd}(M,h_M)+1.
	\end{equation*}
\end{theorem}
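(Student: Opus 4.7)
The theorem is an immediate combination of two cited results: Cao's vanishing theorem \cite{C14}, which asserts
\[H^q(X,K_X\otimes M\otimes\calI_+(h_M))=0,\quad q\ge n-{\rm nd}(M,h_M)+1,\]
where $\calI_+(h_M):=\bigcup_{\eps>0}\calI(h_M^{1+\eps})$ is the upper semicontinuous regularization, together with Guan--Zhou's strong openness theorem \cite{GZ15}, which asserts $\calI_+(h_M)=\calI(h_M)$. Substituting the latter into the former yields the conclusion, so the substantive content is to prove Cao's vanishing, which I sketch below.

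My plan for Cao's vanishing is as follows. First, invoke Demailly's regularization to choose a quasi-equisingular approximation $\{h_j\}$ of $h_M$. Since each $h_j$ has analytic singularities, apply Hironaka desingularization to obtain a log resolution $\mu_j:\tilde X_j\to X$ making the singular divisor of $\mu_j^*h_j$ simple normal crossing. On $\tilde X_j$ the pulled back curvature current $\cur_{(\mu_j^*M,\mu_j^*h_j)}$ decomposes as $\alpha_j+[D_j]$ with $\alpha_j$ smooth semi-positive and $[D_j]$ an integration current on an SNC divisor. Apply Bonavero's Nadel-type vanishing \cite{Bon98} on $\tilde X_j$: the bidimensional count controlling the vanishing range is essentially the number of strictly positive eigenvalues of $\alpha_j$, which by the definition of numerical dimension is at least ${\rm nd}(M,h_j)$ almost everywhere. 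This yields
\[H^q(\tilde X_j,K_{\tilde X_j}\otimes\mu_j^*M\otimes\calI(\mu_j^*h_j))=0\]
for $q\ge n-{\rm nd}(M,h_j)+1$. Using $\mu_{j,*}(K_{\tilde X_j/X}\otimes\calI(\mu_j^*h_j))=\calI(h_j)$, the vanishing of the higher direct images $R^k\mu_{j,*}$ of the same sheaf for $k\ge 1$ (a standard consequence of the local vanishing theorem for analytic singularities), and Leray's spectral sequence, one obtains $H^q(X,K_X\otimes M\otimes\calI(h_j))=0$ in the same range.

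Finally, pass to the limit $j\to\infty$. The containment $\calI(h_j^{m(1+\delta)})\subseteq\calI(h_M^m)$ from the definition of quasi-equisingular approximation, combined with the reverse containment supplied by strong openness applied to each $h_M^m$, realizes $\calI_+(h_M)$ as a directed union $\bigcup_j\calI(h_j^{1+\delta_j})$ with $\delta_j\downarrow 0$; on a compact manifold, cohomology vanishing passes through this directed union, delivering the claim. The main obstacle is the Bochner/resolution step on each $\tilde X_j$ together with Cao's key technical fact that the $\limsup$ defining ${\rm nd}(M,h_M)$ is in fact a limit independent of the chosen quasi-equisingular approximation; this independence is precisely what allows the numerical dimension bound to be transferred uniformly from $X$ to each $\tilde X_j$ with no loss as $j\to\infty$.
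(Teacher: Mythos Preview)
Your first paragraph is exactly what the paper does: it states the theorem as an immediate consequence of Cao's vanishing for $\calI_+(h_M)$ together with Guan--Zhou's strong openness $\calI_+(h_M)=\calI(h_M)$, and gives no further argument. So on the level of what the paper actually proves, you are fully aligned.

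Your additional sketch of Cao's theorem goes beyond the paper and has some real problems. First, Bonavero's result \cite{Bon98} is a singular Morse inequality, not a Nadel-type vanishing governed by the number of positive eigenvalues; there is no ``bidimensional count'' in Bonavero that directly yields vanishing in degrees $q\ge n-\ndim+1$. Second, the numerical dimension in Definition~\ref{numerical dim} is an integral invariant (non-vanishing of $\int_X(\cur)^q\wedge u$ for suitable test forms $u$), not a pointwise count of strictly positive eigenvalues of $\alpha_j$; there is no statement of the form ``$\alpha_j$ has at least $\ndim(M,h_M)$ positive eigenvalues almost everywhere,'' and in fact this can fail. Cao's actual argument is an $L^2$/Bochner estimate carried out directly on $X$ using the quasi-equisingular approximation and a careful limiting procedure, not a reduction to a resolution-by-resolution vanishing. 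Third, your limit step (``cohomology vanishing passes through this directed union'') is not automatic: one needs Noetherianity to stabilize the sheaves and then to control the cokernels, which is fine, but the way you couple it with the $h_j$ is circular since you already invoked strong openness in the reduction. None of this affects the statement you were asked to justify, since the paper treats Cao's theorem as a black box; but if you intend to include the sketch, it should be revised or dropped.
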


\begin{definition}
	A holomorphic morphism $\pi:Y\rightarrow X$ is called a \textit{K\"ahler modification} of $X$ if $\pi$ is proper, and  $Y$ is a K\"ahler manifold and  there exists a nowhere dense closed analytic subset $B\subseteq Y$ such that the restriction $\pi|_{Y\backslash B}:Y\backslash B\rightarrow X\backslash \pi(B)$ is an isomorphism.
\end{definition}

\begin{lemma}\label{lem kahler mod}
	Let $(M,h_M)\ge_\Grif 0$ be a holomorphic line bundle on a compact complex manifold $X$ with analytic singularties.
	Suppose
	\begin{equation*}
		\int_X (\cur_{(M,h_M)})^{\dim X}>0
	\end{equation*}
	in the sense of $($\textup{\ref{Cao's definition}}$)$.
	Then $X$ admits a K\"ahler modification.
\end{lemma}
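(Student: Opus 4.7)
The plan is to produce a big line bundle on a suitable smooth modification of $X$, so that the modification is Moishezon, and then to invoke Moishezon's structure theorem to obtain a projective --- hence K\"ahler --- modification of $X$. The strategy has to pass through a log resolution of $h_M$ because $X$ itself is not assumed to be K\"ahler, and so one cannot appeal directly to a Boucksom-type volume characterization on $X$.

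First, I would use Hironaka's theorem to take a log resolution $\pi\colon\tilde X\to X$ of the analytic singularities of $h_M$, so that $\pi^{-1}$ of the singular locus becomes a simple normal crossings divisor. Using the analytic structure of the singularities, the pullback decomposes as
\[
\pi^*h_M = h'\cdot|s_D|^{-2},
\]
where $D=\sum a_iE_i$ is an effective $\mathbb{Q}$-divisor with $a_i\in\mathbb{Q}_{>0}$, $s_D$ is a canonical (multi-)section cutting out $D$, and $h'$ is a smooth Hermitian metric on the $\mathbb{Q}$-line bundle $\pi^*M\otimes\mathcal{O}(-D)$. Writing $\alpha$ for the curvature form of $h'$, one has the current identity
\[
\pi^*\cur_{(M,h_M)} = \alpha + [D].
\]
On the dense open set $\tilde X\setminus\mathrm{Supp}(D)$ the divisor part vanishes, so $\alpha = \pi^*\cur_{(M,h_M)}\ge 0$ there; since $\alpha$ is smooth on all of $\tilde X$ and the semi-positive cone is closed, continuity forces $\alpha\ge 0$ globally.

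Because $h_M$ already has analytic singularities, the constant sequence $h_j\equiv h_M$ qualifies as a quasi-equisingular approximation in the sense of Cao (the nesting $\calI(h_M^{m(1+\delta)})\subseteq \calI(h_M^m)$ is automatic). Hence the Cao integral satisfies
\[
\int_X \cur_{(M,h_M)}^n = \int_{\tilde X}\alpha^n > 0
\]
by hypothesis. Clearing denominators by a positive integer $N$, the genuine line bundle $L := N\bigl(\pi^*M\otimes\mathcal{O}(-D)\bigr)$ on $\tilde X$ carries the smooth semi-positive Hermitian metric $(h')^N$, whose curvature $N\alpha$ is everywhere $\ge 0$ and satisfies $\int_{\tilde X}(N\alpha)^n > 0$. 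Siu's solution of the Grauert--Riemenschneider conjecture (valid on an arbitrary compact complex manifold) then implies that $L$ is big, so $\tilde X$ is Moishezon. By Moishezon's structure theorem there is a proper bimeromorphic morphism $\sigma\colon Y\to\tilde X$ with $Y$ projective and hence K\"ahler; the composition $\pi\circ\sigma\colon Y\to X$ is proper, is an isomorphism off a nowhere dense closed analytic subset, and has $Y$ K\"ahler, giving the desired K\"ahler modification of $X$.

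The delicate point I anticipate is the combination of the two non-automatic facts above: the global semi-positivity of $\alpha$ on $\tilde X$ (handled by the density-and-continuity argument) and the identification $\int_X \cur_{(M,h_M)}^n = \int_{\tilde X}\alpha^n$ under Cao's quasi-equisingular definition. Once both are in hand, the remainder is a clean invocation of the classical theorems of Siu and Moishezon.
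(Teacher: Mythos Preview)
Your proof is correct and follows essentially the same overall arc as the paper's: show the manifold is Moishezon via a big line bundle, then extract a projective (hence K\"ahler) modification. The difference is one of packaging. The paper invokes Bonavero's singular holomorphic Morse inequalities directly on $(X,M,h_M)$ to conclude that $M$ is big, hence $X$ is Moishezon; it then appeals (somewhat tersely) to Hironaka to produce the K\"ahler modification. You instead unpack the heart of Bonavero's argument by hand: resolve the analytic singularities, peel off the divisorial part to obtain a smooth semi-positive form $\alpha$ with $\int_{\tilde X}\alpha^n>0$, and apply Siu's Grauert--Riemenschneider theorem in the smooth case. This is exactly how Bonavero's inequality is proved, so your route is a transparent expansion of the paper's one-line citation. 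Your final appeal to Moishezon's structure theorem is arguably more precise than the paper's reference to ``Hironaka's desingularization,'' which really means ``Moishezon's theorem followed by Hironaka.'' In short: same strategy, your version trades brevity for self-containment.
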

\begin{proof}
	By (\ref{Cao's definition}) and Bonavero's singular Morse inequality (see \cite[Theorem 2.2.30]{MM07}), the line bundle $M$ is big.
	Thus, $X$ is a Moishezen manifold, i.e.,
	there exists a projective algebraic variety $Y$ with $\dim Y=\dim X$ and a bimeromorphic map $\mu:X \dashrightarrow Y$.
	By Hironaka's desingularization theorem, there exists a K\"ahler modification of $X$.
\end{proof}

Additionally, if one wishes to weaken the condition of the underlying manifolds, then we have Meng-Zhou's vanishing Theorem for holomorphically convex K\"ahler manifolds.
\begin{theorem}[\cite{MZ19}]
\label{MZ}
	Let $X$ be a holomorphically convex K\"ahler manifold and $(L, h_L)\geq_\Grif0$ be a holomorphic vector bundle.
	 Suppose $h_L$ is smooth outside an analytic subset $Z$, satisfying $(\sqrt{-1}\Theta_{(L,h_L)})^\ell(x_0)\neq0$ for some $x_0\in X\backslash Z$.
	 Then, for any $q\geq n-\ell+1$, we have
	 \begin{equation*}
	 	H^q(X,K_X\otimes  L\otimes\calI(h_L))=0.
	 \end{equation*}
\end{theorem}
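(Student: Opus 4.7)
The plan is to extend Cao's compact K\"ahler vanishing (Theorem \ref{K-V-N-C}) to the holomorphically convex case by combining Demailly's quasi-equisingular approximation with $L^2$-techniques on a pseudoconvex exhaustion. Since $X$ is holomorphically convex and K\"ahler, it admits a smooth proper plurisubharmonic exhaustion $\rho: X \to \mathbb{R}$, so $X = \bigcup_c X_c$ with $X_c := \{\rho < c\}$ relatively compact and weakly pseudoconvex.

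First I would apply Demailly's regularization to produce a quasi-equisingular sequence $\{h_\nu\}$ for $h_L$, each $h_\nu$ having analytic singularities and curvature $\sqrt{-1}\Theta_{(L,h_\nu)} \geq -\epsilon_\nu\omega$ with $\epsilon_\nu \to 0$, and satisfying $\calI(h_\nu^{1+\delta}) \subseteq \calI(h_L)$ for $\nu$ large. By the strong openness of Guan-Zhou \cite{GZ15}, it suffices to establish the vanishing with $\calI(h_L)$ replaced by $\calI(h_\nu^{1+\delta})$ and then pass to the limit. Fixing such an $h_\nu$, take a log-resolution $\pi: \tilde X \to X$ so that $\pi^*h_\nu$ has snc divisorial singularities; then $\tilde X$ inherits holomorphic convexity via $\pi$ and can be made K\"ahler after appropriate blow-ups.

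On each $\tilde X_c := \pi^{-1}(X_c)$, I would perform the Andreotti-Vesentini-Ohsawa $L^2$-estimate for $\bar\partial$ on $(n,q)$-forms twisted by the pulled-back singular weight coming from $\pi^*h_\nu$. The nondegeneracy hypothesis $(\cur_{(L,h_L)})^\ell(x_0) \neq 0$ at $x_0 \in X\setminus Z$ provides at least $\ell$ positive curvature eigenvalues in a neighborhood of $x_0$; feeding this partial positivity into the twisted Bochner-Kodaira-Nakano formula (or Demailly's hard Lefschetz principle) yields solvability of $\bar\partial$ with $L^2$ bounds in bidegree $(n,q)$ for $q \geq n-\ell+1$ on each $\tilde X_c$. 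Descending to $X$ via Grauert-Riemenschneider-type vanishing $R^i\pi_* = 0$ for $i \geq 1$, then taking $c \to \infty$ and finally $\nu \to \infty$ completes the argument.

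The main obstacle will be coordinating the two limits. The exhaustion limit $c \to \infty$ requires Takegoshi-style harmonic-integral compactness to prevent $L^2$-norms of $\bar\partial$-solutions from blowing up as the pseudoconvex domains approach $X$, while the approximation limit $\nu \to \infty$ needs strong openness together with the monotonicity $\calI(h_\nu^{1+\delta}) \subseteq \calI(h_L)$ to transfer the vanishing back to the original multiplier ideal. Producing uniform $L^2$ bounds compatible with both the growth of $\rho$ and the decay of the curvature defect $\epsilon_\nu$ — i.e., making the two limiting procedures commute — is the technical heart of the proof.
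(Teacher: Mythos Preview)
The paper does not prove this statement at all: Theorem~\ref{MZ} is quoted from Meng--Zhou \cite{MZ19} as a black box in the preliminaries and is only \emph{applied} later (in Corollary~\ref{Meng-Zhou type vanishing}). So there is no ``paper's own proof'' to compare your proposal against.

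That said, your sketch has a genuine gap in the core analytic step. You write that the hypothesis $(\sqrt{-1}\Theta_{(L,h_L)})^\ell(x_0)\neq 0$ ``provides at least $\ell$ positive curvature eigenvalues in a neighborhood of $x_0$; feeding this partial positivity into the twisted Bochner--Kodaira--Nakano formula \ldots\ yields solvability of $\bar\partial$ with $L^2$ bounds.'' But a pointwise condition at a single $x_0$ cannot produce a global a priori $L^2$ estimate: the curvature term $[\sqrt{-1}\Theta,\Lambda]$ in the Bochner--Kodaira identity is only \emph{semi}-positive away from $x_0$, so you get no coercivity on $X_c$ as a whole. The mechanism that actually makes this work (and what \cite{MZ19} does) is different: one shows that any $L^2$ harmonic $(n,q)$-form $u$ in the relevant range satisfies the pointwise equality case of Bochner--Kodaira, which forces certain contractions of $u$ against the curvature to vanish identically; the strict positivity of $\ell$ eigenvalues near $x_0$ then forces $u$ to vanish on an open set, and one propagates $u\equiv 0$ by an Aronszajn-type unique continuation argument. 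In other words, it is a \emph{rigidity-of-harmonic-forms} argument, not a direct $L^2$ solvability estimate.

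Two smaller points. First, a holomorphically convex K\"ahler manifold need not admit a \emph{strictly} plurisubharmonic exhaustion; what you get from the Remmert reduction is only a psh exhaustion constant along the compact fibers, so ``weakly pseudoconvex'' is the right word but you cannot use it to manufacture extra positivity. Second, your plan to pass through a quasi-equisingular approximation and log-resolution is heavier machinery than needed here: since $h_L$ is already assumed smooth outside an analytic set $Z$, one can work directly on $X\setminus Z$ with the given metric and handle the singular locus via $L^2$-extension across $Z$, avoiding the double limit $c\to\infty$, $\nu\to\infty$ altogether.
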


The following lemma explains the factorial property of submodule sheaves under proper holomorphic modifications.

\begin{lemma}[\cite{SY23}]\label{sheaf under modification}

	Let $\mu:\tilde X\rightarrow X$ be a proper holomorphic modification
	and $(E, h)\ge_\Grif 0$ be a holomorphic vector bundle on a complex manifold $X$.
	Then
	\begin{equation*}
		\mu_*(K_{\tilde X}\otimes \mu^*\calE(\mu^*h))
		=K_X\otimes \calE( h),
	\end{equation*}
where $\mu^*\calE(\mu^* h)$ is the multiplier submodule sheaf associated to $(\mu^*E,\mu^*h)$.
\end{lemma}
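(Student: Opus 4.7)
The argument is local on $X$, so fix a coordinate chart $V \subset X$ over which $E$ is trivialized, and set $U := \mu^{-1}(V)$. Let $Z \subset X$ be a nowhere dense analytic subset outside which $\mu$ is a biholomorphism, and $\tilde Z := \mu^{-1}(Z)$. The plan is to exhibit the pullback $\mu^*$ as a bijection between $\Gamma(V,\, K_X\otimes \calE(h))$ and $\Gamma(U,\, K_{\tilde X}\otimes \mu^*\calE(\mu^*h))$; naturality in $V$ then upgrades this to the claimed equality of $\calO_X$-modules.

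For the easy direction, given $\eta \in \Gamma(V, K_X\otimes \calE(h))$, set $\xi := \mu^*\eta$, a $\mu^*E$-valued holomorphic top form on $U$. The twist by $K_X$ is essential: it makes $|\eta|^2_h$ a globally defined $(n,n)$-form, so the change of variables formula — valid because $\mu$ is a biholomorphism off a measure-zero set — yields
\[
\int_{\mu^{-1}(W)} |\xi|^2_{\mu^*h} \;=\; \int_W |\eta|^2_h \;<\; \infty
\]
for every $W \Subset V$, placing $\xi$ in $K_{\tilde X}\otimes \mu^*\calE(\mu^*h)$.

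For the reverse direction, take $\xi \in \Gamma(U, K_{\tilde X}\otimes \mu^*\calE(\mu^*h))$. Off $\tilde Z$ it descends through $\mu$ to a holomorphic section $\eta_0 \in \Gamma(V\setminus Z,\, K_X\otimes E)$ with $\mu^*\eta_0 = \xi$, and the same change of variables gives $\int_W|\eta_0|^2_h < \infty$ for $W \Subset V$. What remains is the extension of $\eta_0$ across $Z$. This is where Griffiths semi-positivity enters: since $(E,h)\ge_\Grif 0$, the matrix entries of the dual metric $h^*$ are plurisubharmonic, hence locally bounded above, so after shrinking $V$ one has $h \ge c\,h_0$ in the Hermitian sense for any smooth reference metric $h_0$ and some $c>0$. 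Therefore $|\eta_0|^2_{h_0} \le c^{-1}|\eta_0|^2_h$ is locally integrable, meaning $\eta_0$ is locally $L^2$ with respect to a smooth metric. The classical Riemann-type $L^2$ removable singularities theorem across the nowhere dense analytic set $Z$ then furnishes a holomorphic extension $\eta \in \Gamma(V, K_X\otimes E)$. Since $\mu^*\eta$ and $\xi$ are holomorphic sections of $K_{\tilde X}\otimes \mu^*E$ agreeing on the dense open set $U\setminus \tilde Z$, they agree on all of $U$, and the local integrability of $|\eta|^2_h$ places $\eta$ in $\calE(h)$.

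The main obstacle is this removable-singularities step. The key trick — reading Griffiths semi-positivity as an upper bound on $h^*$, equivalently a lower bound $h \ge c\,h_0$ — is what converts intrinsic $L^2(h)$ control into classical $L^2$ control against a smooth metric, after which the standard holomorphic extension across a proper analytic set applies. With this in hand, the bijection on sections is natural in $V$ and promotes to the stated sheaf isomorphism.
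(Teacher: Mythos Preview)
Your proof is correct and follows essentially the same approach as the paper's: both arguments use the change-of-variables formula for top-degree forms together with the observation that Griffiths semi-positivity forces $h$ (and hence $\mu^*h$) to be locally bounded below by a smooth metric, so that $L^2(h)$-integrability implies ordinary $L^2$-integrability and the classical $L^2$ removable-singularities theorem applies across the exceptional set. Your presentation is in fact slightly tidier: you only invoke the extension step in the reverse direction (pushing down from $\tilde X$ to $X$), whereas the paper also phrases the forward direction as an extension, which is not strictly necessary since the pullback $\mu^*f$ of a holomorphic $(n,0)$-form is already holomorphic on all of $\mu^{-1}(U)$.
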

\begin{proof}
	By definition, there exists an analytic subset $A\subsetneq X$ such that the map
	\begin{equation*}
		\mu:\tilde X\backslash\mu^{-1}(A) \rightarrow X\backslash A
	\end{equation*}
	is a holomorphic bijection.
	
	For any open subset $U\subseteq X$, $f\in H^0(U,K_X\otimes \calE(h))$ is a $E$-valued holomorphic $(n,0)$-form on $U$ with $|f|_h^2\in L_{\loc}^1(U)$.
	Here, $\mu^*f$ denotes the pullback of the restriction on $U\backslash A$ of $f$.

	Then, one has
	\begin{equation*}
		\mu^*f \in H^0(\mu^{-1}(U)\backslash\mu^{-1}(A), K_{\tilde X}\otimes\mu^*E)
	\end{equation*}
	and
	\begin{equation*}
		\int_{\mu^{-1}(V)\backslash\mu^{-1}(A)} |\mu^*f|^2_{\mu^*h}
		=\int_{V\backslash A}|f|^2_h <+\infty
	\end{equation*}
	for any $V\Subset U$.
	Since $\mu^*h$ has a positive lower bound locally,  $\mu^*f$ can be  extended holomorphically across the analytic subset $\mu^{-1}(A)$, i.e.,
	\begin{equation*}
		\mu^*f\in H^0(\mu^{-1}(U),K_{\tilde X}\otimes \mu^*\calE(\mu^*h))
		=H^0(U,\mu_*(K_{\tilde X}\otimes \mu^*\calE(\mu^*h))).
	\end{equation*}
	Conversely, if $g\in H^0(\mu^{-1}(U),K_{\tilde X}\otimes \mu^*E(\mu^*h))$,
	we denote by $(\mu^{-1})^*g$ the pullback of the restriction on $\mu^{-1}(U)\backslash\mu^{-1}(A)$ of $g$.
	By a similar argument, $(\mu^{-1})^*g$ can also be extended holomorphically  across the analytic subset $A$, and
	\begin{equation*}
		(\mu^{-1})^*g\in H^0(U,K_X\otimes \calE(h)).
	\end{equation*}
\end{proof}

It is worth mentioning  that the multiplier submodule sheaf itself does not possess the factorial property, motivating us to consider the isomorphism
 \[\pi_*(K_{\mathbb{P}(E^*)}\otimes (L_E)^{m+r}\otimes \pi^*\det E^*)=K_X\otimes S^mE\]
instead of $\pi_*(L_E)^m=S^mE$.

The injectivity theorem for higher direct image sheaves, which has been widely studied in the last decades, is a powerful tool in complex geometry and algebraic geometry (see \cite{Kol86,Tak95,Fuj12,FM21,Mat22}, etc).
A more general statement of injectivity theorem was established by Zhou-Zhu, as follows,

\begin{theorem}[\cite{ZZ}]
\label{ZZ inj}
	Let $X$ be a holomorphically convex K\"ahler manifold,
	$(F,h_F)$ and $(M,h_M)$ be two holomorphic line bundles with singular Hermitian metrics.
	Suppose
	\begin{equation*}
		\cur_{(F,h_F)}\geq0,\ \cur_{(F,h_F)}\geq b\cur_{(M,h_M)}
	\end{equation*}
	in the sense of currents for some positive constant $b$.
	Then for any global holomorphic section $s$ of $M$ satisfying
	\begin{equation*}
		\sup_\Omega |s|_{h_M}<+\infty
	\end{equation*}
	for any $\Omega\Subset X$
	and $q\geq0$, the map
	\begin{equation*}
		H^q(X,K_X\otimes F\otimes\calI(h_F))\xrightarrow{\otimes s}
		H^q(X,K_X\otimes F\otimes M\otimes\calI(h_F\otimes h_M))
	\end{equation*}
	is injective.
	\end{theorem}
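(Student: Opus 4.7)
The plan is to adapt Enoki's Hodge-theoretic approach to the Koll\'ar injectivity theorem to the singular setting over a holomorphically convex K\"ahler manifold. Given $\xi\in H^q(X,K_X\otimes F\otimes\calI(h_F))$ with $s\cdot\xi=0$ in $H^q(X,K_X\otimes F\otimes M\otimes\calI(h_F\otimes h_M))$, the strategy is to represent $\xi$ by a $\bar\partial$-closed $L^2$ form $\alpha$ with values in $K_X\otimes F$ and construct an $L^2$ primitive $\bar\partial u=\alpha$ with $u\in K_X\otimes F\otimes\calI(h_F)$, which would force $\xi=0$.

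First I would regularize: Demailly's analytic approximation produces $h_{F,\nu}\searrow h_F$ with curvature $\ge-\eps_\nu\omega$, whose multiplier ideals stabilize to $\calI(h_F)$ by Guan-Zhou's strong openness. The holomorphic convexity enters through the Remmert reduction $\pi\colon X\to X'$ to a Stein space and a Stein exhaustion of $X'$: the problem is reduced to $L^2$-solvability on the weakly pseudoconvex K\"ahler open sets $\pi^{-1}(U)$ equipped with a complete K\"ahler metric, where the H\"ormander-Demailly machinery is available, with global patching handled by Cartan's Theorem B on $X'$.

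The analytic heart of the argument is a perturbation with the singular weight $h_F|s|_{h_M}^{-2t}$ on $F$, $t\in(0,b)$. The hypothesis $\sqrt{-1}\Theta_{(F,h_F)}\ge b\sqrt{-1}\Theta_{(M,h_M)}$ gives
\[
\sqrt{-1}\Theta_{(F,\,h_F|s|_{h_M}^{-2t})}=\sqrt{-1}\Theta_{(F,h_F)}-t\sqrt{-1}\Theta_{(M,h_M)}+t[s=0]\ge(1-t/b)\sqrt{-1}\Theta_{(F,h_F)}\ge 0,
\]
so the perturbed weight is still semipositively curved, while the local bound $\sup_\Omega|s|_{h_M}<\infty$ ensures that $\alpha$ is $L^2$ with respect to it. Because $s\cdot[\alpha]=0$, an Ohsawa-Takegoshi-type division argument (using the extra integrability provided by the factor $|s|_{h_M}^{-2t}$) yields $\bar\partial u_t=\alpha$ on each Stein piece with $L^2$ bound $\int|u_t|^2_{h_F}|s|_{h_M}^{-2t}$ controlled uniformly in $t$.

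Finally, letting $t\to 0^+$, $\nu\to\infty$, and exhausting $X'$, a diagonal weak-limit argument in the appropriate Hilbert space produces a global $u$ with $\bar\partial u=\alpha$, and strong openness identifies $u$ as a section of $K_X\otimes F\otimes\calI(h_F)$; hence $\xi=0$. I expect the main obstacle to lie in making these three simultaneous limits compatible on the non-compact manifold $X$: the interplay between the Stein exhaustion of $X'$, the equisingular sequence $h_{F,\nu}$, and the twisting parameter $t$ is delicate, and it is precisely the combination of K\"ahlerness (giving a complete K\"ahler metric) and holomorphic convexity (giving the Remmert reduction) that makes the argument go through.
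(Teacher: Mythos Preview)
The paper does not give its own proof of this statement; Theorem~\ref{ZZ inj} is quoted verbatim from Zhou--Zhu \cite{ZZ} as a black-box input and is then \emph{applied} (in the proofs of Theorem~\ref{torsionfree thm} and Theorem~\ref{inj for vb}) without any reproduction of its argument. There is therefore nothing in the present paper to compare your proposal against.

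For what it is worth, your outline is a plausible high-level summary of the strategy actually used in \cite{ZZ}: Demailly's equisingular approximation of $h_F$, the twist by $|s|_{h_M}^{-2t}$ to exploit $\sqrt{-1}\Theta_{(F,h_F)}\ge b\,\sqrt{-1}\Theta_{(M,h_M)}$, passage to the Remmert reduction with a Stein exhaustion, and Guan--Zhou strong openness at the end. The one place where your sketch is vaguer than the real argument is the sentence ``because $s\cdot[\alpha]=0$, an Ohsawa--Takegoshi-type division argument yields $\bar\partial u_t=\alpha$'': in Zhou--Zhu the mechanism is not a division/extension theorem but a Bochner--Kodaira identity on a complete K\"ahler metric, in which the hypothesis $s\alpha=\bar\partial(\cdots)$ is used to show that the harmonic representative of $\alpha$ lies in the kernel of the curvature operator and hence is $\bar\partial^*$-closed for the perturbed weight as well; the $L^2$ primitive then comes from standard $\bar\partial$-theory. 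Your three-parameter limit ($t\to 0$, $\nu\to\infty$, exhaustion) is indeed the delicate part, and it is handled in \cite{ZZ} by carefully ordering the limits rather than taking them simultaneously.
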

As a corollary, one can obtain the torsion-freeness of higher direct image sheaves.
\begin{theorem}[Torsion-freeness, \cite{Mat22}]
\label{torsionfree thm}
	Let $f$ be a proper holomorphic surjective morphism from a K\"ahler manifold $X$ to a complex analytic variety $Y$.
	Suppose $(F,h_F)\ge_\Grif 0$ be a holomorphic line bundle.
	Then, for any $k\geq0$, the sheaf
	\begin{equation*}
		R^kf_*(K_X\otimes F\otimes\calI(h_F))
	\end{equation*}
	is torsion-free.

In particular, $R^kf_*(K_X\otimes F\otimes\calI(h_F))=0$ for $k>\dim X-\dim Y$.
\end{theorem}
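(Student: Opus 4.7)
Both assertions are local on $Y$, so I would fix $y_0\in Y$ and work in a sufficiently small Stein neighborhood $U\subseteq Y$ of $y_0$. Since $f$ is proper, $f^{-1}(U)$ is holomorphically convex, and it inherits a K\"ahler structure as an open subset of $X$. Write $\calF := K_X\otimes F\otimes \calI(h_F)$. The coherence of $R^kf_*\calF$ (valid in the present singular setting) combined with Cartan's Theorem B degenerates the Leray spectral sequence on $U$, yielding the identification
\[
\Gamma\bigl(U,\,R^kf_*\calF\bigr)\;\simeq\;H^k\bigl(f^{-1}(U),\,\calF\bigr).
\]

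For torsion-freeness, take $\sigma\in \Gamma(U, R^kf_*\calF)$ together with a non-zero-divisor $g\in\calO_Y(U)$ satisfying $g\sigma=0$. Set $s:=f^*g$, a holomorphic function on $f^{-1}(U)$ that is non-zero (because $f$ is surjective and $g$ does not vanish identically) and bounded on every relatively compact subset. Viewing $s$ as a section of the trivial line bundle $(M,h_M)=(\calO_{f^{-1}(U)},1)$ makes $\cur_{(M,h_M)}\equiv 0$, so the curvature hypotheses of Theorem \ref{ZZ inj} collapse to $\cur_{(F,h_F)}\geq 0$, which is already assumed. That theorem then delivers the injectivity of the multiplication map
\[
H^k\bigl(f^{-1}(U),\calF\bigr)\xrightarrow{\;\otimes s\;}H^k\bigl(f^{-1}(U),\calF\bigr),
\]
and $s\cdot\sigma=0$ forces $\sigma=0$, proving that $R^kf_*\calF$ is torsion-free.

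For the vanishing assertion let $V\subseteq Y$ be the dense open smooth locus of $f$ and set $d:=\dim X-\dim Y$. For $y\in V$ the fiber $X_y$ is a compact complex manifold of dimension $d$, so the coherent restriction $\calF|_{X_y}$ satisfies $H^k(X_y,\calF|_{X_y})=0$ whenever $k>d$ by cohomological dimension. Grauert's upper semicontinuity theorem (Theorem \ref{Grauert's upper semicontinuity}) applied to the smooth proper morphism $f|_{f^{-1}(V)}$ then yields $(R^kf_*\calF)|_V=0$. Hence $R^kf_*\calF$ is coherent and supported on the nowhere dense analytic subset $Y\setminus V$, so the torsion-freeness already established forces $R^kf_*\calF=0$.

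The main obstacle is the injectivity input Theorem \ref{ZZ inj}, whose proof relies on a delicate twisted $L^2$ Bochner-Kodaira-Nakano identity together with the resolution of a $\overline{\partial}$-equation on holomorphically convex K\"ahler manifolds carrying $L^2$-bounded singular data; a subtler ancillary point is invoking the coherence of $R^kf_*\calF$ in the singular setting as a black box. Once those ingredients are granted, the Stein localization, the Leray degeneration, and the generic-fiber cohomological dimension bound combine routinely to deliver both statements.
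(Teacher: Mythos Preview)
Your proof is correct and follows essentially the same route as the paper: localize to a Stein neighborhood, identify sections of $R^kf_*\calF$ with $H^k(f^{-1}(U),\calF)$, and apply Theorem~\ref{ZZ inj} with the trivial line bundle $(M,h_M)=(\calO,1)$ to the pullback $f^*g$ of a nonzero function to obtain injectivity of multiplication. One minor imprecision worth noting: in your vanishing argument, Theorem~\ref{Grauert's upper semicontinuity} as stated in the paper applies only to vector bundles, whereas $\calF=K_X\otimes F\otimes\calI(h_F)$ is generally not locally free; the desired conclusion $(R^kf_*\calF)|_V=0$ for $k>d$ still holds, but it follows more directly from the fact that the fibers over the smooth locus $V$ all have dimension $d$ (e.g.\ via the formal functions theorem, or a relative Stein Leray cover), so the argument goes through unchanged.
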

\begin{proof}
	
	Suppose there exists $s \in R^kf_*(K_X\otimes F\otimes\calI(h_F))_y$ and $0\neq t\in\calO_{Y,y}$ for a given $y\in Y$, such that $t\cdot s=0$.
	Let us consider a Stein space neighborhood $U$ of $y$ that is sufficiently small to define $s$ and $t$.
	By the assumption that $X$ is a holomorphically convex K\"ahler manifold,
	the preimage $U':=f^{-1}(U)$ is also a holomorphically convex K\"ahler manifold.
	Then $f^* t\in\calO_{X}(U')$.
	
	Applying \[(M,h_M)=(U'\times\CC,1)\]
	in Theorem \ref{ZZ inj},
	then $f^* t$ is a nonzero global section on $M$, satisfying the conditions of Theorem \ref{ZZ inj}.
	Consequently, the map
	\begin{equation*}
		H^k(U',K_X\otimes F\otimes\calI(h_F))\hookrightarrow H^k(U',K_X\otimes F\otimes\calI(h_F)),
	\end{equation*}
	induced by tensoring $f^* t$, is an injection.
	
	Since $t\cdot s=0$, then $f^*t \cdot s=0$, where the later $s$ is regarded as an element of $H^k(U', K_X\otimes F\otimes \calI(h_F))$.
	Therefore, by Theorem \ref{ZZ inj}, we conclude that $s=0$.

This implies that
\begin{equation*}
    R^kf_*(K_X\otimes F\otimes\calI(h_F))
\end{equation*}
is torsion-free, since $y$ is arbitrary.
\end{proof}	

\begin{remark}\label{torsionfree rem}
\begin{enumerate}[(i)]
	\item
	If the torsion-free sheaf $R^kf_* (K_X\otimes F\otimes\calI(h_F))$ is supported on a proper analytic subset, then according to R\"uckert's Nullstellensatz (Theorem \ref{Ruckert Nullstellensatz}), we obtain
	\begin{equation*}
		R^kf_*(K_X\otimes F\otimes\calI(h_F))=0.
	\end{equation*}
	\item
	We can also derive same results to those in Theorem \ref{torsionfree thm} under a milder condition: the existence of an open covering $\{U_\alpha\}$ of $Y$ such that $f^{-1}(U_\alpha)$ is K\"ahler, compared to the K\"ahlerity of $X$.
\end{enumerate}
\end{remark}

It is evident from Lemma \ref{sheaf under modification}, Theorem \ref{torsionfree thm}, and Theorem \ref{Leray thm} that the cohomology groups with multiplier ideal sheaves remain invariant under K\"ahler modifications.
\begin{lemma}\label{lem isom under mod}
  Let $(F,h_F)\ge_\Grif0$ be a holomorphic line bundle over a complex manifold $X$ and $\mu:\tilde{X}\to X$ be a K\"ahler modification. Then for any $q\ge0$,
  \[H^q(X,K_X\otimes F\otimes\calI(h_F))\simeq H^q(\tilde{X},K_{\tilde{X}}\otimes \mu^*F\otimes\calI(\mu^*h_F)).\]
\end{lemma}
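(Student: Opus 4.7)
\textbf{Proof plan for Lemma \ref{lem isom under mod}.} The strategy is to apply Leray's isomorphism theorem (Theorem \ref{Leray thm}) to the proper map $\mu$ and the sheaf $\calF:=K_{\tilde X}\otimes \mu^*F\otimes\calI(\mu^*h_F)$. For this we need two inputs: an identification of $\mu_*\calF$, and a vanishing of $R^k\mu_*\calF$ for all $k\geq 1$. Then Leray immediately produces
\[
H^q(\tilde X,\calF)\simeq H^q(X,\mu_*\calF)=H^q(X,K_X\otimes F\otimes\calI(h_F)).
\]

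\textbf{The zeroth direct image.} Because $F$ is a line bundle, the multiplier submodule sheaf associated to $(F,h_F)$ is just $\calE(h_F)=F\otimes\calI(h_F)$, and likewise for $(\mu^*F,\mu^*h_F)$. Noting $(\mu^*F,\mu^*h_F)\geq_\Grif 0$ as a pullback of a Griffiths semi-positive metric (Proposition \ref{pro posi}(iv)), Lemma \ref{sheaf under modification} applied to $E:=F$ gives directly
\[
\mu_*\calF = \mu_*\!\left(K_{\tilde X}\otimes \mu^*\calE(\mu^*h_F)\right) = K_X\otimes \calE(h_F)=K_X\otimes F\otimes\calI(h_F).
\]

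\textbf{Vanishing of higher direct images.} For $k\geq 1$, I will apply Theorem \ref{torsionfree thm} in the form of Remark \ref{torsionfree rem}(ii) to the proper surjective morphism $\mu:\tilde X\to X$. Cover $X$ by Stein open sets $U_\alpha$; then $\mu^{-1}(U_\alpha)$ is open in the K\"ahler manifold $\tilde X$ hence K\"ahler, and is holomorphically convex since $\mu$ is proper and $U_\alpha$ is Stein. So the hypotheses of Remark \ref{torsionfree rem}(ii) are met and $R^k\mu_*\calF$ is torsion-free. Moreover, since $\mu$ is a modification, it is a biholomorphism over $X\setminus\mu(B)$, so generic fibers are reduced points and $R^k\mu_*\calF$ is supported on the proper analytic subset $\mu(B)\subsetneq X$ (the image being analytic by Remmert's Theorem \ref{Remmert's proper mapping theorem}). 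A torsion-free sheaf supported on a proper analytic subset vanishes by R\"uckert's Nullstellensatz (Remark \ref{torsionfree rem}(i)), giving $R^k\mu_*\calF=0$ for all $k\geq 1$.

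\textbf{Conclusion and expected obstacle.} With $\mu_*\calF$ identified and all higher direct images vanishing, Leray's theorem immediately yields the claimed isomorphism. There is no serious obstacle: the entire content of the lemma is assembling the three cited results (Lemma \ref{sheaf under modification}, Theorem \ref{torsionfree thm} with Remark \ref{torsionfree rem}, and Theorem \ref{Leray thm}) in the right order. The one subtlety to be careful about is verifying the K\"ahler/holomorphic-convexity hypothesis of the torsion-freeness statement: since $X$ itself need not be K\"ahler or holomorphically convex, the localized form in Remark \ref{torsionfree rem}(ii) is essential, and it is exactly this that the K\"ahlerness of $\tilde X$ in the definition of a K\"ahler modification supplies.
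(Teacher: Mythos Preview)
Your proposal is correct and follows exactly the approach the paper indicates: the paper simply states (in the sentence preceding the lemma) that the result is evident from Lemma~\ref{sheaf under modification}, Theorem~\ref{torsionfree thm}, and Theorem~\ref{Leray thm}, and you have filled in precisely these steps. One small simplification: since $\tilde X$ is K\"ahler by the definition of a K\"ahler modification, Theorem~\ref{torsionfree thm} applies directly without invoking Remark~\ref{torsionfree rem}(ii), and its ``in particular'' clause (with $\dim\tilde X-\dim X=0$) already gives $R^k\mu_*\calF=0$ for $k\geq 1$, so your support argument, while correct, is not needed.
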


\section{Proof of the Main Theorem}
\label{Proof of Main Theorems}
In this section, we present a proof of main Theorem \ref{Isom thm}.
\subsection{{Proof of Theorem \ref{Isom thm}-({\rm i})}}
\label{Proof of Theorem ref isom thm}
		
	In light of Leray's theorem \ref{Leray thm}, our task is to demonstrate the isomorphism of sheaves (Proposition \ref{isom of sheaves})
	and the vanishing of the higher direct image sheaves (Lemma \ref{Direct image vanishing}).
	
	Initially, there is a one-to-one corresponding between $\Gamma(U, S^mE)$ and $ \Gamma(\pi^{-1}(U), L_E^m)$.
	Furthermore, we can extend the isomorphisms twisted with multiplier submodule sheaves.

\begin{proposition}
\label{isom of sheaves}
	Let $(E,h)\ge_\Grif 0$ be a holomorphic vector bundle  on a complex manifold $X$.
	Then we have
   \begin{align*}
   S^m\calE(S^mh)=\pi_*( L_E^{m}\otimes\calI(h_L^{m+r}\otimes\pi^*\det h^*)).
\end{align*}
\end{proposition}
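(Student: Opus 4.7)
The plan is to combine the classical Kobayashi--Ochiai identification $\pi_* L_E^m = S^m E$ with Liu--Sun--Yang's fibrewise formula (Theorem \ref{thm lsy}), extended to singular metrics, and then translate the matching of the integrability conditions via Fubini. Writing $F := L_E^{m+r}\otimes \pi^*\det E^*$ and using the adjunction $(\ref{formula K/X})$ to identify $L_E^m = K_{\P(E^*)/X}\otimes F$, the multiplier ideal sheaf $\calI(h_L^{m+r}\otimes\pi^*\det h^*)$ is controlled by the weight $\varphi_F$ of $h_F := h_L^{m+r}\otimes\pi^*\det h^*$.

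The key fibrewise input is the following version of the Liu--Sun--Yang identity: for $u \in (S^m E)_x$ corresponding to $\tilde u \in H^0(\P(E^*)_x, L_E^m)$,
\begin{equation*}
  |u|^2_{S^m h}(x) \;=\; C_{m,r}\int_{\P(E^*)_x} |\tilde u|^2\,e^{-\varphi_F}.
\end{equation*}
For smooth Griffiths-positive $h$ this is exactly Theorem \ref{thm lsy}. For Griffiths-semipositive singular $h$, I would argue locally via Proposition \ref{prop lowerbound}: choose smooth Griffiths-positive $h_\nu \nearrow h$, apply the smooth case to each $h_\nu$, and pass to the limit as $\nu\to\infty$.

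With this fibrewise identity in hand, the proposition becomes a Fubini-plus-compactness statement. For $u \in \Gamma(U, S^m E)$ corresponding to $\tilde u \in \Gamma(\pi^{-1}(U), L_E^m)$ and $V \Subset U$,
\begin{equation*}
  \int_V |u|^2_{S^m h}\,dV_X \;=\; C_{m,r}'\int_{\pi^{-1}(V)} |\tilde u|^2\,e^{-\varphi_F}.
\end{equation*}
Since $\pi$ is proper with compact fibres $\simeq \P^{r-1}$, the right-hand side being finite for all $V \Subset U$ is equivalent to $\tilde u \in L_E^m \otimes \calI(h_L^{m+r}\otimes\pi^*\det h^*)$ over $\pi^{-1}(U)$ (local integrability near each point of the compact fibre is equivalent to integrability on a relatively compact neighbourhood). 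The left-hand side being finite is by definition $u \in S^m\calE(S^m h)$ near $x$. Hence the two subsheaves of $S^m E$ coincide.

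The main obstacle is the extension of Liu--Sun--Yang's identity to the singular setting. The subtlety is that, as $h_\nu \nearrow h$, the induced weights on $F$ are \emph{not} monotone: a direct computation from $(\ref{induced metric h_L})$ and $\det h_\nu^* = (\det h_\nu)^{-1}$ shows that $(h_\nu)_L \nearrow h_L$ while $\pi^*\det h_\nu^* \searrow \pi^*\det h^*$, so the weight $\varphi_{F,\nu}$ splits as a sum of a decreasing and an increasing weight. One cannot apply monotone convergence directly to the integrand; instead, one exploits that the left-hand side $|u|^2_{S^m h_\nu}$ is monotone increasing in $\nu$ (so the right-hand side integrals are monotone), combined with pointwise a.e.\ convergence of the integrand, to conclude convergence of the integrals via a Fatou/dominated-convergence argument. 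Once the limiting fibrewise identity is secured, the rest of the proof is routine.
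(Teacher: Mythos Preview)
Your overall structure is correct and matches the paper's: Kobayashi--Ochiai identification, Liu--Sun--Yang fibrewise formula extended to singular $h$ via Proposition~\ref{prop lowerbound}, then Fubini. The place where you diverge is the handling of the limit $h_\nu\nearrow h$, and here you make the problem harder than it is.

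The ``obstacle'' you flag---non-monotonicity of $\varphi_{F,\nu}$---dissolves once you notice that $\pi^*\det h^*$ is a pullback from $X$ and therefore \emph{constant on each fibre} $Y_x=\pi^{-1}(x)$. For fixed $x$ with $\det h(x)<+\infty$ one simply factors it out:
\[
\int_{Y_x}|\tilde u|^2\,e^{-\varphi_F}
=\det h^*(x)\int_{Y_x}|\tilde u|^2_{h_L^{m+r}}.
\]
The remaining integrand involves only $h_L$, and since $h_{\nu,L}\nearrow h_L$ this integral is monotone in $\nu$; monotone convergence applies directly. Liu--Sun--Yang for the smooth $h_\nu$ then reads $\int_{Y_x}|\tilde u|^2_{h_{\nu,L}^{m+r}}=C_{m,r}\,|u|^2_{S^mh_\nu}(x)\,\det h_\nu(x)$, and passing to the limit on both sides (each factor converges separately) gives the identity at a.e.\ $x$. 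This is exactly what the paper does.

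By contrast, your proposed workaround is incomplete as written. Knowing that the sequence of fibre integrals is monotone (via LSY applied to each smooth $h_\nu$) and that the integrands converge pointwise a.e.\ does \emph{not} by itself yield that the limit of the integrals equals the integral of the limit: Fatou gives only one inequality, and you supply no dominating function for the reverse. The factor-out-the-constant trick removes the need for any such argument.
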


\begin{proof}
	Firstly, we have
	\[S^mE=\pi_*(K_{\PP(E^*)/X}\otimes L_E^{m+r}\otimes\pi^*\det E^*). \]
	Hence, our objective is to show the equivalence of integrability concerning the corresponding sections.

Since $(E,h)$ is Griffiths semi-positive and the question is local, by Proposition \ref{prop lowerbound}, we may assume that $E$ is trivial and there exists a family of Griffith positive smooth Hermitian metrics $\{h_\nu\}$ on $X$ increasingly converging to $h$. Then the metrics $\{h_{\nu,L}\}$ induced by $\{h_\nu\}$ are smooth positive and increasingly converge to  $h_L$. Then by Theorem \ref{thm lsy}, the $L^2$-metric induced by $h_{\nu,L}^{m+r}$ is equal to a constant multiple of $S^mh_\nu\otimes \det h_\nu$.

Consider the setting with $Y := \PP(E^*)$ and
\[(F,h_F):=( L_E^{m+r}\otimes\pi^*\det E^*,h_L^{m+r}\otimes\pi^*\det h^*).\]
For any $x\in X$ with $\det h(x)<+\infty$ and $u\in S^mE_x=H^0(Y_x,K_{Y_x}\otimes F|_{Y_x})$, the induced $L^2$-metric $h_{h_F}$ satisfies
\begin{align}\label{metric formula}
  |u|_{h_{h_F}}^2(x) & :=\int_{Y_x}^{}|u|^2_{h_F} \nonumber\\
   &= \int_{Y_x}^{}|u|^2_{h_L^{m+r}\otimes\pi^*\det h^*} \nonumber\\
   &= \det h^*(x) \int_{Y_x}^{}|u|^2_{h_L^{m+r}}\nonumber\\
   &=\det h^*(x) \lim_{\nu}\int_{Y_x}^{}|u|^2_{h_{\nu,L}^{m+r}}\nonumber\\
   &=C_{m,r}\det h^*(x) \lim_{\nu} |u|^2_{S^mh_\nu}(x)\cdot\det h_\nu(x)\nonumber\\
   &=C_{m,r} |u|^2_{S^mh}(x),
\end{align}
where $C_{m,r}$ is a constant only dependent on $m$ and $r$.

In summary, the $L^2$-metric induced by $h_{L}^{m+r}\otimes\pi^*\det h^*$ is well-defined and equals to a constant multiple of $S^mh$ almost everywhere.
Moreover, for any suitably small open subset $U$ and $u\in H^0(U,S^m\calE(S^mh))$, we conclude from the definition, (\ref{metric formula}) and Fubini's theorem that for each $U'\Subset U$,
\begin{align*}
  +\infty &>\int_{U'}^{}|u|^2_{S^mh}d\lambda(x)  \\
   &=C_{m,r}^{-1}\int_{U'}^{}\int_{Y_x}^{}|u|^2_{h_F}d\lambda(x)  \\
   &=C_{m,r}^{-1}\int_{\pi^{-1}(U')}|u|^2_{h_F}\pi^*d\lambda(x) ,
\end{align*}
where $d\lambda(x)=\sqrt{-1}^{n^2}dx\wedge d\bar{x}$ is a volume form on $U$. This implies that $u\wedge\pi^*dx\in  H^0(\pi^{-1}(U), K_{Y}\otimes F)$ is $L^2$ integrable on $\pi^{-1}(U')$ with respect to $h_F$ and hence
\[u\in H^0(\pi^{-1}(U), K_{Y/X}\otimes F\otimes\calI(h_F)).\]

Conversely, take $u\in H^0(\pi^{-1}(U), K_{Y/X}\otimes F\otimes\calI(h_F))$, then for any $U'\Subset U$,  it follows from the finite covering theorem, (\ref{metric formula}) and Fubini's theorem that $u$ is $L^2$ integrable on $U'$ with respect to $S^m h$. Hence $u\in H^0(U,S^m\calE(S^mh))$.

	Consequently, we establish that
\begin{equation*}
	(K_X\otimes S^m\calE(S^mh))(U)
	\simeq \pi_*(K_{\PP(E^*)}\otimes L_E^{m+r}\otimes\pi^*\det E^*\otimes\calI(h_L^{m+r}\cdot\pi^*\det h^*)) (U).
\end{equation*}
This concludes the proof of Proposition \ref{isom of sheaves}.
\end{proof}

\begin{remark}\label{rmk iso}
  From the proof, we can strengthen  {\textup{Proposition \ref{isom of sheaves}}} to the following result:

  Let $(E,h)\ge_\Grif 0$ be a holomorphic vector bundle and $M$ be a holomorphic line bundle with a singular Hermitian metric $h_M$  on a Hermitian manifold $(X,\omega)$. Assume that \[\cur_{(M,h_M)}\ge-C\omega \] for some constant $C$.
	Then we have
   \begin{align*}
   S^m\calE\otimes\calM(S^mh\otimes h_M)=\pi_*( L_E^{m}\otimes\pi^*M\otimes\calI(h_L^{m+r}\otimes\pi^*\det h^*\otimes\pi^*h_M)),
\end{align*}
where $S^m\calE\otimes \calM(S^mh\otimes h_M)$ is the multiplier submodule sheaf associated to $(S^mE\otimes M,S^mh\otimes h_M)$.
  \end{remark}
	
Recall that Deng-Ning-Wang-Zhou \cite{DNWZ20} introduced a new notion ``optimal $L^2$-estimate condition" and gave a characterization of Nakano positivity for smooth Hermitian metrics on holomorphic vector bundles via this notion.
Subsequently, Inayama \cite{In20} provided a definition of Nakano positivity of singular Hermitian metrics on holomorphic vector bundles based on this characterization.
	\begin{definition}
			[\cite{DNWZ20, In20}]
			Let $(E,h)$ be a Griffiths semi-postive singular Hermitian holomorphic vector bundle.
			 Then $h$ is called Nakano semi-positive, denoted by $(E,h)\ge_\Nak 0$, if
			$h$ is $L^2$ optimal (optimal $L^2$-estimate condition):

   for any Stein coordinate $U$ such that $E|_U$ is trivial, any K\"ahler form $\omega_U$ on $U$, any $\psi \in {\rm Spsh}(U)\cap C^\infty(U)$ and any $\overline{\partial}$-closed $f\in  L^2_{(n,1)} (U,\omega_U,E|_U,he^{-\psi})$, there exists  $u\in  L^2_{(n,0)} (U,\omega_U,E|_U,he^{-\psi})$ such that $\overline{\partial}u = f$ and
			\begin{equation}\label{def Hormander estimate}
			\int_{U}^{} |u|^2_{\omega_U,h} e^{-\psi} dV_{\omega_U}
			\le \int_{U}^{} \langle B^{-1}_{\psi} f,f \rangle_{\omega_U,h} e^{-\psi} dV_{\omega_U},
			\end{equation}
			provided that the right hand side is finite, where $B_{\psi}=[\sqrt{-1}\partial\overline{\partial}\psi \otimes {\rm Id}_E, \Lambda_{\omega_U}]$.
		\end{definition}	

Essentially following the idea of \cite[Theorem 4.3]{DNWZ20} together with  (\ref{metric formula}),  it is deduced that strongly Nakano positivity implies Nakano positivity.
\begin{proposition}
		\label{thm strong implies NP}
		Let $X$ be a complex manifold and $(E,h)\geq_\SNak0$.
		Then $h$ is a Nakano semi-positive singular Hermitian metric.
	\end{proposition}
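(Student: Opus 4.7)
The plan is to verify the optimal $L^2$-estimate condition on $(E,h)$ by lifting the problem to the projectivized bundle $\PP(E^*)$, where the strong Nakano assumption reduces everything to a scalar H\"ormander estimate for a Griffiths semi-positive singular line bundle metric. This is precisely the philosophy of Deng-Ning-Wang-Zhou \cite{DNWZ20}, and the bridge between the two settings is supplied by the fiber integration identity (\ref{metric formula}) with $m=1$, which identifies the $L^2$-metric induced by $h_L^{1+r}\otimes\pi^*\det h^*$ on $\PP(E^*)$ with a constant multiple of $h$ on $E$.

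In detail, I would fix a Stein coordinate $U\subset X$ on which $E$ is trivial, a K\"ahler form $\omega_U$ on $U$, a smooth strictly plurisubharmonic $\psi$ on $U$, and a $\bar{\partial}$-closed $f\in L^2_{(n,1)}(U,\omega_U,E|_U,he^{-\psi})$ with finite right-hand side in (\ref{def Hormander estimate}). Using Proposition \ref{isom of sheaves} with $m=1$, reinterpret $f$ as a $\bar{\partial}$-closed form on $\pi^{-1}(U)$ with values in the line bundle $F:=L_E^{1+r}\otimes\pi^*\det E^*$ endowed with the singular metric $h_F:=h_L^{1+r}\otimes\pi^*\det h^*$; by the definition of $\ge_\SNak 0$, this metric is Griffiths semi-positive on $\PP(E^*)$. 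Equip $\pi^{-1}(U)$ with the family of K\"ahler forms $\omega_t:=\pi^*\omega_U+t\,\omega_{\FS}$, where $\omega_{\FS}$ is a fiberwise Fubini-Study form, and apply the standard H\"ormander $L^2$-estimate with weight $\pi^*\psi$ on the line bundle $(F,h_F)$ to solve $\bar{\partial} u_t=f$ with
\[
\int_{\pi^{-1}(U)} |u_t|^2_{\omega_t,h_F}\,\pi^*e^{-\psi}\,dV_{\omega_t}
\le \int_{\pi^{-1}(U)} \langle B_{\pi^*\psi,t}^{-1}f,f\rangle_{\omega_t,h_F}\,\pi^*e^{-\psi}\,dV_{\omega_t},
\]
where $B_{\pi^*\psi,t}=[\sqrt{-1}\partial\bar{\partial}(\pi^*\psi),\Lambda_{\omega_t}]$.

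A weak limit $u$ of the $u_t$ as $t\to 0$ then solves $\bar{\partial} u=f$, and Proposition \ref{isom of sheaves} together with (\ref{metric formula}) descend it to an $E$-valued $(n,0)$-form on $U$ whose $h$-norm is controlled, up to the universal constant $C_{1,r}$, by the limit of the right-hand side above. Thus one obtains the desired Nakano estimate (\ref{def Hormander estimate}) for $(E,h)$.

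The main obstacle is purely technical: the K\"ahler metric $\omega_t$ degenerates in the base directions as $t\to 0$, so one must verify that after fiber integration the right-hand side of H\"ormander's inequality converges to exactly $\int_U\langle B_\psi^{-1}f,f\rangle_{\omega_U,h}e^{-\psi}\,dV_{\omega_U}$, with no residual fiber-curvature contribution. This is the pointwise fiberwise calculation carried out in \cite[proof of Theorem 4.3]{DNWZ20}; the Griffiths semi-positivity of $(F,h_F)$ guaranteed by the strong Nakano hypothesis is what makes H\"ormander directly applicable without first smoothing $h$, while (\ref{metric formula}) is precisely what pins the limit down to the expected base-side expression.
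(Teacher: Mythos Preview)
Your strategy coincides with the paper's: lift the $\bar\partial$-problem to $\pi^{-1}(U)$, apply the scalar H\"ormander estimate to the Griffiths semi-positive line bundle $(F,h_F)=(L_E^{r+1}\otimes\pi^*\det E^*,\,h_L^{r+1}\otimes\pi^*\det h^*)$, and descend using the fiber-integration identity (\ref{metric formula}). The difference lies in the execution. The paper does \emph{not} introduce a degenerating family $\omega_t$ or a weak-limit argument. Instead it observes that, because the lifted form $\tilde f=dz\wedge\sum_j f_j(z,w)\,d\bar z_j$ has only base $d\bar z_j$-components and each $f_j(z,\cdot)$ is a holomorphic $(r{-}1,0)$-form along the fiber, both the base quantity $\langle B_\psi^{-1}f,f\rangle_{\omega_U,h}\,dV_{\omega_U}$ and the fiberwise pairing $\int_{\PP(E^*)_z}\langle f_j,f_k\rangle_{h_F}$ are independent of the chosen K\"ahler metrics. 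Hence a \emph{single} application of H\"ormander on $\pi^{-1}(U)$, with any fixed K\"ahler form and weight $\pi^*\psi$, already yields the exact equality
\[
\int_{\pi^{-1}(U)}\langle B_{\pi^*\psi}^{-1}\tilde f,\tilde f\rangle\,e^{-\pi^*\psi}\,dV
=\int_U\langle B_\psi^{-1}f,f\rangle_{\omega_U,h}\,e^{-\psi}\,dV_{\omega_U},
\]
with no limiting procedure needed. Your $\omega_t\to 0$ scheme (the original DNWZ device) would reach the same conclusion, but the metric-independence shortcut makes it superfluous here.

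Two small corrections. First, $\omega_t=\pi^*\omega_U+t\,\omega_{\FS}$ degenerates along the \emph{fibers} as $t\to 0$, not along the base. Second, the descent of the solution $\tilde u$ is not via Proposition~\ref{isom of sheaves} (which concerns holomorphic sections and multiplier sheaves); rather, one uses interior regularity of $\bar\partial$ on $(n{+}r{-}1,0)$-forms to take $\tilde u$ smooth, reads off $\bar\partial_w\tilde u=0$ from the fact that $\tilde f$ has no $d\bar w$-part, and concludes that $\tilde u|_{\pi^{-1}(z)}\in H^0(\PP(E^*)_z,K_{\PP(E^*)_z}\otimes F)=E_z$, so that $\tilde u$ is genuinely a section of $E$.
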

\begin{proof}
	Let us denote the holomorphic line bundle \[(F, h_F):=(L_E^{r+1}\otimes \pi^*\det E^*, h_L^{r+1}\otimes \pi^*\det h^*).\]
	The condition $(E,h)\geq_\SNak0$ implies $(F, h_F)\ge_\Grif0$.
	By formula (\ref{formula K/X}), we have  $E=\pi_*(K_{\PP(E^*)/X}\otimes F)$.
	
	It suffices to show that $(E, h)$ is $L^2$ optimal.
	Consider any Stein coordinate $(U, z)$ such that $E|_U$ is trivial and any K\"ahler metric $\omega_U$ on $U$.
	Let $f$ be a $\overline{\partial}$-closed smooth $(n, 1)$-form with values in $E$,
	and $\psi$ be any smooth strictly plurisubharmonic function on $U$.
	We express
 \[f (z) = dz \wedge ( f_1(z)d\bar z_1 + \cdots + f_n(z)d\bar z_n),\]
 with $f_i(z) \in E_z = H^0(\PP(E^*)_z, K_{\PP(E^*)_z} \otimes F).$
	One can	identify $f$ as a smooth  $(n+r-1, 1)$-form $\tilde f(z, w) := dz\wedge( f_1(z, w)d\bar z_1+\cdots+ f_n(z, w)d\bar z_n)$
	on $\pi^{-1}(U)$, where $f_i(z, w)$ represents holomorphic sections of $K_{\PP(E^*)}\otimes F|_{\pi^{-1}(z)}$.
	
	We observe the following:
\begin{itemize}
	\item[(a)]
	$\dpa_w f_i(z, w) = 0$ for any fixed $z \in U$, as $f_i(z, w)$ are holomorphic sections of  $K_{\PP(E^*)_z}  \otimes F|_{\pi^{-1}(z)}$.
	\item[(b)]
	$ \dpa_z f = 0$, since $f$ is a $\dpa$-closed form.
\end{itemize}	
	It follows that $\tilde f$ is a smooth $\dpa $-closed $ (n + r-1, 1)$-form on $U\times \PP^{r-1}$ with values in $F$.

Since $f$ is $E$-valued $(n,1)$-form, $\langle B^{-1}_\psi  f,  f\rangle_{\omega_{U},  h}dV_{\omega_{U}}$ is independent of the choice of $\omega_U$ and thus
               \[\langle B^{-1}_\psi  f,  f\rangle_{\omega_{U},  h}dV_{\omega_{U}}=
              \sum_{j,k=1}^{n} \psi^{jk}\langle f_j,f_k\rangle_{h}\sqrt{-1}^{n^2}dz\wedge d\bar z,\]
              where $(\psi^{jk}) = (\frac{\pa^2\psi}{\pa z_j\pa\bar z_k})^{-1}$.

And  since each $f_j|_{\PP(E^*)_z}$ is holomorphic  $F|_{\PP(E^*)_z}$-valued $(r-1,0)$-form, then \[\int_{\PP(E^*)_z}\langle f_j,f_k\rangle_{\omega_{\pi^{-1}(U)}|_{\PP(E^*)_z},h_F}dV_{\omega_{\pi^{-1}(U)}|_{\PP(E^*)_z}}\] does not depend on the choice of $\omega_{\pi^{-1}(U)}$. Hence for any fixed $z\in U$ with $\det h(z)<+\infty$, we have
\begin{align*}
   &\int_{\PP(E^*)_z}\langle B^{-1}_{\pi^*\psi} \tilde f,  \tilde f\rangle_{\omega_{\pi^{-1}(U)}|_{\PP(E^*)_z},  h_F}dV_{\omega_{\pi^{-1}(U)}|_{\PP(E^*)_z}}     \\
   = & \sum_{j,k=1}^{n}\psi^{jk}(z)\left(\int_{\PP(E^*)_z}\langle f_j,f_k\rangle_{\omega_{\pi^{-1}(U)}|_{\PP(E^*)_z},h_F}dV_{\omega_{\pi^{-1}(U)}|_{\PP(E^*)_z}}\right)\sqrt{-1}^{n^2}dz\wedge d\bar z \\
  = &\sum_{j,k=1}^{n}\psi^{jk}(z)\langle f_j,f_k\rangle_{h}\sqrt{-1}^{n^2}dz\wedge d\bar z,
\end{align*}
where the last equality is due to the formula (\ref{metric formula}).

Then by the Fubini-Tonelli theorem, we obtain that
\begin{align*}
            \int_{\pi^{-1}(U)}^{}\langle B^{-1}_{\pi^*\psi}\tilde f,\tilde f\rangle_{\omega_{\pi^{-1}(U)},  h_F}e^{-\pi^*{\psi}}dV_{\omega_{\pi^{-1}(U)}}  =  \int_{U}^{}\langle B^{-1}_\psi  f,  f\rangle_{\omega_{U},  h}e^{-\psi}dV_{\omega_{U}}<+\infty.
\end{align*}

	Notice that $h_F$ is semi-positive on $F$, then there exists an $(n+r-1, 0)$-form $\tilde u$ with $L^2$ coefficients such that  $\dpa \tilde u=\tilde f$ and
\begin{align*}
&\int_{\pi^{-1}(U)} |\tilde u|^2_{\omega_{\pi^{-1}(U)},h_F} e^{-\pi^*\psi}dV_{\omega_{\pi^{-1}(U)}}\\
\le & \int_{\pi^{-1}(U)} \langle B_{\pi^*\psi}^{-1} \tilde f, \tilde f \rangle_{\omega_{\pi^{-1}(U)},h_F} e^{-\pi^*\psi} dV_{\omega_{\pi^{-1}(U)}}\\
=& \int_{U}^{}\langle B^{-1}_\psi  f,  f\rangle_{\omega_{U},  h}e^{-\psi}dV_{\omega_{U}}.
\end{align*}
	
	Additionally, due to the weak regularity of $\dpa$ on $(n+r-1,0)$-forms, we can take $\tilde u$ to be smooth.
	It's observed that $\dpa\tilde u|_{ P(E^*)_z}=0$ for any fixed $z\in U$, due to $\dpa\tilde u=\tilde f$ and $\dpa_w f_i(z, w) = 0$.
	This means that $\tilde u_z:=\tilde u(z,\cdot)\in E_z$.
Consequently, we may view $\tilde u$ as a section $u$ of $E$.
Then $\dpa u=f$ and
\begin{align*}
	\int_{U} | u|^2_{\omega_U,h} e^{-\psi} dV_{\omega_U}
	&=\int_{\pi^{-1}(U)} |\tilde u|^2_{\omega_{\pi^{-1}(U)},h_F} e^{-\pi^*\psi}dV_{\omega_{\pi^{-1}(U)}}\\
 &\le\int_{U}^{}\langle B^{-1}_\psi  f,  f\rangle_{\omega_{U},  h}e^{-\psi}dV_{\omega_{U}}.
\end{align*}

\end{proof}
\begin{remark}
    Similarly, we can show that $(E,h)\ge_\SNak0$ implies that
    $(S^mE,S^mh)\ge_\Nak0$ for any $m\ge1$.
\end{remark}

	We now establish the vanishing of higher direct image sheaves, thereby concluding the proof of Theorem \ref{Isom thm}-({\rm i}).

\begin{lemma}
\label{Direct image vanishing}
	Assume $X$ is a complex manifold.
	Let $(E,h)\ge_\SNak 0$ be a holomorphic vector bundle of rank $r$ on $X$.
	Then for any $k, m>0$, we have
	\begin{equation*}
		R^k\pi_*\left( L_E^{m}\otimes\calI(h_L^{m+r}\otimes\pi^*\det h^*) \right)=0.
	\end{equation*}
\end{lemma}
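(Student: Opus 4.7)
My plan is to pass via Kobayashi--Ochiai's relative canonical formula to a $K_{\PP(E^*)/X}$-twisted reformulation, establish coherence and torsion-freeness using Nadel coherence, Grauert's direct image theorem, and Matsumura's torsion-freeness (Theorem \ref{torsionfree thm}), and then kill the sheaf by fibrewise Bott vanishing together with R\"uckert's Nullstellensatz (Theorem \ref{Ruckert Nullstellensatz}).

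Using $K_{\PP(E^*)/X}\cong L_E^{-r}\otimes\pi^*\det E$ from (\ref{formula K/X}), I would identify
\begin{equation*}
L_E^m\otimes\calI(h_L^{m+r}\otimes\pi^*\det h^*) \;\cong\; K_{\PP(E^*)/X}\otimes F\otimes\calI(h_F),
\end{equation*}
where $F:=L_E^{m+r}\otimes\pi^*\det E^*$ and $h_F:=h_L^{m+r}\otimes\pi^*\det h^*$. Strong Nakano semi-positivity supplies $(L_E^{r+1}\otimes\pi^*\det E^*,\,h_L^{r+1}\otimes\pi^*\det h^*)\ge_\Grif 0$, and tensoring with the Griffiths semi-positive $(L_E^{m-1},h_L^{m-1})$ (obtained from $(E,h)\ge_\Grif 0$ via the quotient $\pi^*E\twoheadrightarrow L_E$ and Proposition \ref{pro posi}) yields $(F,h_F)\ge_\Grif 0$. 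Writing $\calG:=R^k\pi_*(K_{\PP(E^*)/X}\otimes F\otimes\calI(h_F))$, Nadel coherence of $\calI(h_F)$ together with Grauert's direct image theorem make $\calG$ coherent. For torsion-freeness, each $x\in X$ admits a Stein neighbourhood $U$ trivialising $E$, so $\pi^{-1}(U)\cong U\times\PP^{r-1}$ is K\"ahler; Remark \ref{torsionfree rem}(ii) applied to Matsumura's Theorem \ref{torsionfree thm} then shows $R^k\pi_*(K_{\PP(E^*)}\otimes F\otimes\calI(h_F))$ is torsion-free, and the projection formula against the invertible sheaf $K_X$ transfers torsion-freeness to $\calG$.

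It remains to exhibit a nonempty open $\Omega\subset X$ on which $\calG|_\Omega=0$: then $\supp\calG$ is a proper closed analytic subset, and R\"uckert's Nullstellensatz combined with torsion-freeness forces $\calG=0$. I would take $\Omega\subset X$ to be an open locus on which $h^*$ is locally bounded and uniformly positive definite; such $\Omega$ exists because $\det h>0$ almost everywhere, $(E,h)\ge_\Grif 0$ forces the entries of $h^*$ to be locally bounded above (PSH), and Proposition \ref{prop lowerbound} supplies smooth Griffiths positive approximants on a dense open regular locus. Over $\pi^{-1}(\Omega)$, formula (\ref{induced metric h_L}) shows $h_L$ is smooth and strictly positive along each fiber, while $\pi^*\det h^*$ is a smooth positive function; consequently $h_F$ is locally bounded on $\pi^{-1}(\Omega)$ and $\calI(h_F)|_{\pi^{-1}(\Omega)}=\calO$. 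Since $F|_{\PP(E^*)_x}\cong\calO_{\PP^{r-1}}(m+r)$ with $m+r>r-1=\dim\PP^{r-1}$, Bott's vanishing (Theorem \ref{Bott vanishing}) gives $H^k(\PP(E^*)_x,K_{\PP(E^*)_x}\otimes F|_{\PP(E^*)_x})=0$ for all $k>0$, and cohomology-and-base-change --- valid over $\Omega$ since the restricted sheaf is locally free there --- yields $\calG|_\Omega=0$.

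The principal obstacle is producing $\Omega$ rigorously from the bare hypothesis $(E,h)\ge_\SNak 0$, since singular Hermitian metrics are only measurable a priori and extracting a genuine open locus of regularity is delicate. This step is cleanest under the paper's refinement that locally $he^{-\psi}$ is strongly Nakano semi-positive for a smooth function $\psi$, so that the singular locus of $h$ coincides with that of a well-controlled Griffiths semi-positive metric and $\Omega$ can be taken to be the open complement of an analytic singularity set.
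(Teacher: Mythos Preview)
Your overall architecture---coherence of $\calI(h_F)$, torsion-freeness of the higher direct image via Theorem \ref{torsionfree thm} and Remark \ref{torsionfree rem}(ii), generic vanishing on the base by fibrewise cohomology, then R\"uckert's Nullstellensatz---matches the paper's proof exactly. The difference, and the gap, is in how you produce the locus on the base where the direct image vanishes.

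You try to find an \emph{open} set $\Omega\subset X$ on which $h^*$ is bounded and positive definite, so that $\calI(h_F)|_{\pi^{-1}(\Omega)}=\calO$. As you yourself flag, this is not justified: for a general Griffiths semi-positive singular metric there is no reason for any such open set to exist, and the refinement ``locally $he^{-\psi}$ is strongly Nakano semi-positive for smooth $\psi$'' does not help, since that hypothesis does not force the singular locus of $h$ to be analytic or even closed. Your attempted justification via Proposition \ref{prop lowerbound} only gives pointwise approximants, not an open regular locus.

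The paper sidesteps this entirely. Since $(F,h_F)\ge_\Grif 0$, the multiplier ideal $\calI(h_F)$ is coherent, so $A:=\supp\calO_{\PP(E^*)}/\calI(h_F)$ is analytic, and by Remmert's theorem $\pi(A)\subset X$ is analytic. The key point is that $\pi(A)\subsetneq X$: for any $x$ with $\det h(x)<+\infty$ (a full-measure set), the restriction $h_F|_{\pi^{-1}(x)}$ is a \emph{smooth} positive metric on the fibre, and an Ohsawa--Takegoshi extension argument (extend the constant $1$ from the fibre through the semi-positive weight $h_F$) shows $\calI(h_F)_y=\calO_y$ for every $y\in\pi^{-1}(x)$. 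Thus one works on the genuine Zariski open $U\setminus\pi(A)$, where the ideal sheaf is trivial; Grauert's upper semicontinuity (Theorem \ref{Grauert's upper semicontinuity}) plus base change and Kodaira vanishing on the fibres then kill the stalks of the direct image on a Zariski-dense subset, and torsion-freeness finishes. The point is that you never need the metric $h$ itself to be regular on any open set---only that the \emph{coherent} ideal $\calI(h_F)$ is trivial outside a proper analytic set, which is automatic once you know it is not everywhere nontrivial.
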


\begin{proof}	
	To begin, we rephrase the expression:
	\[L_E^{m}\otimes\calI(h_L^{m+r}\otimes\pi^*\det h^*)=K_{\mathbb{P}(E^*)/X}\otimes L_E^{r+m}\otimes\pi^*\det E^*\otimes\calI(h_L^{r+m}\otimes\pi^*\det h^*).\]
	Since $h_L^{m+r}\otimes\pi^*\det h^*$ is  Griffiths semi-positive, then the sheaf $ \calI(h_L^{m+r}\otimes\pi^*\det h^*)$ is coherent.
	This implies that the set
	\[A:=\supp\calO_{\PP(E^*)}\big/ \calI(h_L^{m+r}\otimes\pi^*\det h^*)\] is an analytic subset of $\PP(E^*)$.
	It follows from  Theorem \ref{Remmert's proper mapping theorem} that $\pi(A)$ is an analytic subset of $X$.
	
	Consider a Stein open set $U\subset X$ such that $E|_U$ is trivial.
	For any $x\in\{\det h<+\infty\}$,
	the restricted metric $h_L|_{\pi^{-1}(x)}$ is smooth Griffiths positive. and so is $(h_L^{m+r}\otimes\pi^*\det h^*)|_{\pi^{-1}(x)}$.
	Since the fiber $\PP^{r-1}$ is quasi-Stein,
	then by Ohsawa-Takegoshi $L^2$-extension, we know that such fiber $\pi^{-1}(x)$ does not intersect with $A$.
	Thus, $\pi(A)\subsetneq X$.
	
	Consider the projection map
 \begin{equation*}
	    \pi:(U\setminus \pi(A))\times \PP^{r-1}\to U\setminus \pi(A).
     \end{equation*}
	Since  $ \calI(h_L^{m+r}\otimes\pi^*\det h^*)$  is trivial over $(U\setminus \pi(A))\times \PP^{r-1}$, then
	by Grauert's upper semicontinuity theorem (see Theorem \ref{Grauert's upper semicontinuity}), we deduce that
	\begin{align*}
     x\mapsto {\rm dim} H^k(\pi^{-1}(x),  (K_{\mathbb{P}(E^*)/X}\otimes L_E^{r+m}\otimes\pi^*\det E^*)|_{\pi^{-1}(x)})
   \end{align*}
   is a locally constant function on a non-empty Zariski open set $U'\subset U\setminus \pi(A)$.
   Furthermore, for any $x\in U'$, the following isomorphism holds:
   \begin{align*}
   &\	\frac{(R^k\pi_* (K_{\mathbb{P}(E^*)/X}\otimes L_E^{r+m}\otimes\pi^*\det E^*))_x}{\mathfrak{m}_x(R^k\pi_* (K_{\mathbb{P}(E^*)/X}\otimes L_E^{r+m}\otimes\pi^*\det E^*))_x}
   	\\\simeq \ &\ H^k(\pi^{-1}(x),  (K_{\mathbb{P}(E^*)/X}\otimes L_E^{r+m}\otimes\pi^*\det E^*)|_{\pi^{-1}(x)}).
   \end{align*}

    Additionally, we observe the following:
    \begin{enumerate}[(\rm i)]
        \item
    $\pi^{-1}(x)=\PP^{r-1}$ is compact K\"ahler,
    \item $ (K_{\mathbb{P}(E^*)/X}\otimes L_E^{r+m}\otimes\pi^*\det E^*)|_{\pi^{-1}(x)}=K_{\pi^{-1}(x)}\otimes (L_E^{r+m}\otimes\pi^*\det E^*)|_{\pi^{-1}(x)},$
    \item for $x\in \{\det h<+\infty\}\cap U'$, $(h_L^{r+m}\otimes\pi^*\det h^*)|_{\pi^{-1}(x)}$ is a smooth Griffiths positive metric on $(L_E^{r+m}\otimes\pi^*\det E^*)|_{\pi^{-1}(x)}$.
    \end{enumerate}

   Consequently, by the Kodaira vanishing theorem, it follows that
   \begin{equation*}
       H^k(\pi^{-1}(x),  (K_{\mathbb{P}(E^*)/X}\otimes L_E^{r+m}\otimes\pi^*\det E^*)|_{\pi^{-1}(x)})=0,
       \end{equation*}
   for any $x\in \{\det h<+\infty\}\cap U'$ and any $k>0$.

   Hence by the Nakayama Lemma, for $x\in \{\det h<+\infty\}\cap U'$  and $k>0$, we have
	\begin{equation}
		(R^k\pi_*  (K_{\mathbb{P}(E^*)/X}\otimes L_E^{r+m}\otimes\pi^*\det E^*))_x=0.
	\end{equation}

Therefore, for $x\in \{\det h<+\infty\}\cap U'$  and $k>0$, we obtain
\begin{equation}
		(R^k\pi_*( L_E^{m}\otimes\calI(h_L^{m+r}\otimes\pi^*\det h^*)))_x=0.
	\end{equation}
	Since $R^k\pi_*( L_E^{m}\otimes\calI(h_L^{m+r}\otimes\pi^*\det h^*))$ is coherent, then it is supported on a proper analytic subset of $U$.

Additionally, combining with
	 Theorem \ref{torsionfree thm} and Remark \ref{torsionfree rem}-$({\rm ii})$, it can be concluded that
	\begin{equation*}
	R^k\pi_*( L_E^{m}\otimes\calI(h_L^{m+r}\otimes\pi^*\det h^*))
	\end{equation*}
	is torsion-free.
	Thus, according to Remark \ref{torsionfree rem}-$({\rm i})$, we infer that
	\begin{equation*}
	     R^k\pi_*( L_E^{m}\otimes\calI(h_L^{m+r}\otimes\pi^*\det h^*))=0.
      \end{equation*}
\end{proof}

\begin{remark}\label{rmk van}
 The proof allows us to strengthen {\textup{Lemma \ref{Direct image vanishing}}} to the following assertion:

  Let $(E,h)$ be a holomorphic vector bundle and $(M,h_M)$ be a holomorphic line bundle on a  Hermitian manifold $(X,\omega)$. Suppose that \begin{equation*}
      \cur_{(L_E^{m+r},h_L^{m+r})}-\pi^*\cur_{(\det E,\det h)}+\pi^*\cur_{(M,h_M)}\ge-C\pi^*\omega
      \end{equation*}
      for some constant $C$.
	Then, for any $k>0$, we obtain
	\begin{align*}
   R^k\pi_*( L_E^{m}\otimes\pi^*M\otimes\calI(h_L^{m+r}\otimes\pi^*\det h^*\otimes\pi^*h_M))=0.
	\end{align*}

\end{remark}

Moreover, a more general statement than Theorem \ref{Isom thm}  can be obtained via Remark \ref{rmk iso}, Theorem \ref{Leray thm} and Remark \ref{rmk van}.

 \begin{theorem}\label{thm twist iso}
   Let $(E,h)\ge_\Grif 0$ and $W$ be  holomorphic vector bundles and $(M,h_M)$ be a holomorphic line bundle on a Hermitian manifold $(X,\omega)$. Assume that
   \begin{equation*}
       \cur_{(M,h_M)}\ge -C\omega
       \end{equation*}
   and
   \begin{equation*}
   \cur_{(L_E^{m+r},h_L^{m+r})}-\pi^*\cur_{(\det E,\det h)}+\pi^*\cur_{(M,h_M)}\ge-C\pi^*\omega
   \end{equation*}
   for some positive constant $C$.
	Then for any $p,q\ge0,m>0$, we have
   \begin{align*}
   &H^q(X,W\otimes S^m\calE\otimes\calM(S^mh\otimes h_M))\\\simeq&H^q(\PP(E^*), \pi^*W\otimes L_E^{m}\otimes\pi^*M\otimes\calI(h_L^{m+r}\otimes\pi^*\det h^*\otimes\pi^*h_M)).
\end{align*}
 \end{theorem}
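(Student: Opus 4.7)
The plan is to follow the same two-step strategy used in the proof of Theorem \ref{Isom thm}-(i), namely combine an isomorphism of sheaves at level zero with a vanishing statement for higher direct images, and then invoke Leray's isomorphism theorem. The two inputs are already packaged in Remark \ref{rmk iso} and Remark \ref{rmk van}, so the task is to glue them together with $W$ inserted and apply Leray.

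First I would apply Remark \ref{rmk iso}, using the hypothesis $\cur_{(M,h_M)}\ge -C\omega$, to obtain the identification
\begin{equation*}
S^m\calE\otimes\calM(S^mh\otimes h_M)
\simeq \pi_*\bigl(L_E^{m}\otimes\pi^*M\otimes\calI(h_L^{m+r}\otimes\pi^*\det h^*\otimes\pi^*h_M)\bigr).
\end{equation*}
Tensoring with the locally free sheaf $W$ and using the projection formula $\pi_*(\pi^*W\otimes \calF)=W\otimes\pi_*\calF$, I get
\begin{equation*}
W\otimes S^m\calE\otimes\calM(S^mh\otimes h_M)
\simeq \pi_*\bigl(\pi^*W\otimes L_E^{m}\otimes\pi^*M\otimes\calI(h_L^{m+r}\otimes\pi^*\det h^*\otimes\pi^*h_M)\bigr).
\end{equation*}

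Next I would use the second curvature hypothesis to feed Remark \ref{rmk van}, which yields
\begin{equation*}
R^k\pi_*\bigl(L_E^{m}\otimes\pi^*M\otimes\calI(h_L^{m+r}\otimes\pi^*\det h^*\otimes\pi^*h_M)\bigr)=0,\quad k>0.
\end{equation*}
Again by the projection formula for higher direct images (which is legitimate since $\pi^*W$ is locally free, so it is $\pi_*$-acyclic in the derived sense), the same vanishing holds after tensoring with $\pi^*W$:
\begin{equation*}
R^k\pi_*\bigl(\pi^*W\otimes L_E^{m}\otimes\pi^*M\otimes\calI(h_L^{m+r}\otimes\pi^*\det h^*\otimes\pi^*h_M)\bigr)=0,\quad k>0.
\end{equation*}

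With the $p=0$ case of Theorem \ref{Leray thm} in hand, these two facts immediately give
\begin{equation*}
H^q(\PP(E^*),\pi^*W\otimes L_E^{m}\otimes\pi^*M\otimes\calI(h_L^{m+r}\otimes\pi^*\det h^*\otimes\pi^*h_M))
\simeq H^q(X,W\otimes S^m\calE\otimes\calM(S^mh\otimes h_M)),
\end{equation*}
which is the desired conclusion. I expect no substantial obstacle: all the hard analytic work is absorbed into Remark \ref{rmk iso} and Remark \ref{rmk van}, so the only point demanding some care is the legitimacy of the projection formula with the multiplier ideal $\calI(\ldots)$ present. This is standard because $\pi^*W$ is locally free and $\pi$ is proper, allowing one to work locally where $W$ becomes trivial and reduce to the untwisted case already handled by Remarks \ref{rmk iso} and \ref{rmk van}.
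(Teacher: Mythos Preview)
Your proposal is correct and follows exactly the route the paper itself indicates: combine Remark \ref{rmk iso}, Remark \ref{rmk van}, and Leray's isomorphism (Theorem \ref{Leray thm}), with the projection formula handling the locally free twist by $W$. This is precisely what the paper does, so there is nothing to add.
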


\subsection{Proof of Theorem \ref{Isom thm}-({\rm ii})}
	To establish the second result of Theorem $\ref{Isom thm}$, we consider $W=\Omega_X^p$ and $m=1$ in Theorem \ref{Isom thm}-({\rm i}), which yields an isomorphism of cohomologies:
\begin{align}\label{formula isom of E and F^p}
	H^q(X, \Omega_X^p\otimes \calE(h))
	\simeq H^q(\PP(E^*), \pi^*\Omega_X^p \otimes L_E\otimes\calI(h_L^{1+r}\otimes\pi^*\det h^*)).
\end{align}
	Hence, the rest is to construct an isomorphism between the cohomologies of $\pi^*\Omega_X^p \otimes L_E\otimes\calI(h_L^{1+r}\otimes\pi^*\det h^*)$ and $\Omega_{\PP(E^*)}^p\otimes  L_E\otimes\calI(h_L^{1+r}\otimes\pi^*\det h^*)$.

	For convenience, we denote these sheaves by
\begin{align*}
	&\calF^p:=\pi^*\Omega_X^p\otimes  L_E\otimes\calI(h_L^{1+r}\otimes\pi^*\det h^*)\\
	&\calG^p:=\Omega_{\PP(E^*)}^p\otimes  L_E\otimes\calI(h_L^{1+r}\otimes\pi^*\det h^*).
\end{align*}

	Our goal now is to find an open covering such that $\calF^p$ and $\calG^p$ have the same sections on these open sets, and their higher cohomologies both vanish.
	
\begin{lemma}
\label{relation of H0}
	Let $U\subset X$ be a Stein open set such that $E$ is locally trivial on $U$.
 Then the inclusion $\calF^p\subset \calG^p$ induces an isomorphism
		  \begin{equation}
		  \label{local isom induced by interm sheaf}
		  	H^0(\pi^{-1}(U),\calF^p)
		  	\simeq
		  	H^0(\pi^{-1}(U), \calG^p);
		  \end{equation}
\end{lemma}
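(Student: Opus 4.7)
My plan is to exploit the natural filtration of $\Omega^p_{\PP(E^*)}$ coming from the submersion $\pi$, reducing the claim to the vanishing of global sections of certain ``vertical'' components on $\pi^{-1}(U)$, which in turn will follow from Bott's theorem on the fibers $\PP^{r-1}$.

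First I would set up the standard filtration $F^j \subseteq \Omega^p_{\PP(E^*)}$ locally generated by $\pi^*\alpha\wedge\beta$ with $\alpha$ a $j$-form on $X$, so that $F^0 = \Omega^p_{\PP(E^*)}$, $F^p = \pi^*\Omega^p_X$, $F^{p+1}=0$, and $F^j/F^{j+1} \simeq \pi^*\Omega^j_X \otimes \Omega^{p-j}_{\PP(E^*)/X}$. Since each graded piece is locally free, tensoring the filtration with the line bundle $L_E$ and the coherent subsheaf $\calI := \calI(h_L^{1+r}\otimes\pi^*\det h^*) \hookrightarrow \calO_{\PP(E^*)}$ preserves exactness, yielding short exact sequences
\begin{equation*}
0 \to F^{j+1}\otimes L_E\otimes \calI \to F^j \otimes L_E \otimes \calI \to \pi^*\Omega^j_X \otimes \Omega^{p-j}_{\PP(E^*)/X}\otimes L_E \otimes \calI \to 0
\end{equation*}
interpolating between $\calF^p = F^p \otimes L_E \otimes \calI$ and $\calG^p = F^0 \otimes L_E \otimes \calI$. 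Taking the associated long exact sequence on $H^0(\pi^{-1}(U),\cdot)$ and iterating from $j=p-1$ down to $j=0$ reduces the lemma to
\begin{equation*}
H^0\bigl(\pi^{-1}(U),\, \pi^*\Omega^j_X \otimes \Omega^k_{\PP(E^*)/X}\otimes L_E \otimes \calI\bigr) = 0 \qquad (k\geq 1).
\end{equation*}

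To prove this vanishing, I would fix a trivialization of $E|_U$, which yields an identification $\pi^{-1}(U) \simeq U\times\PP^{r-1}$ with projections $p_1,p_2$, under which $\pi^*\Omega^j_X \simeq p_1^*\Omega^j_U$, $\Omega^k_{\PP(E^*)/X}\simeq p_2^*\Omega^k_{\PP^{r-1}}$, and $L_E \simeq p_2^*\calO_{\PP^{r-1}}(1)$. Because $\calI \hookrightarrow \calO_{\PP(E^*)}$ and the other factors are locally free, any section of the $\calI$-twisted sheaf is in particular a section of $p_1^*\Omega^j_U \otimes p_2^*(\Omega^k_{\PP^{r-1}}\otimes\calO(1))$, so it suffices to show the latter has no nontrivial sections. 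The Stein property of $U$ and the projection formula (with Cartan's Theorem B to kill higher cohomology over $U$) give
\begin{equation*}
H^0\bigl(U\times\PP^{r-1},\, p_1^*\Omega^j_U \otimes p_2^*(\Omega^k_{\PP^{r-1}}\otimes\calO(1))\bigr) \simeq H^0(U,\Omega^j_U)\otimes_\CC H^0\bigl(\PP^{r-1},\Omega^k_{\PP^{r-1}}\otimes\calO(1)\bigr),
\end{equation*}
and Bott's vanishing theorem (Theorem \ref{Bott vanishing}) kills the second factor whenever $k\geq 1$.

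The only point that might cause concern is that the multiplier ideal $\calI$ arises from a genuinely global metric on $L_E^{1+r}\otimes\pi^*\det E^*$ and has no reason to respect the local product decomposition over $U$, so one might expect delicate analytic input analogous to Lemma \ref{Direct image vanishing}. The key observation, however, is that this subtlety is irrelevant here: since $\calI\subseteq\calO$, the integrability cut can only shrink the space of sections, and the ambient \emph{untwisted} space of sections already vanishes on every unwanted vertical summand. Thus no torsion-freeness or Grauert-type argument is required, and the entire proof reduces to the product structure on $\pi^{-1}(U)$ together with classical Bott vanishing on $\PP^{r-1}$.
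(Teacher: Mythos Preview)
Your proposal is correct and follows essentially the same route as the paper. Both arguments ultimately rest on (i) the product decomposition $\pi^{-1}(U)\simeq U\times\PP^{r-1}$ coming from the trivialization of $E|_U$, (ii) the observation that since $\calI\subseteq\calO_{\PP(E^*)}$ the multiplier ideal can only shrink section spaces, and (iii) Bott's vanishing $H^0(\PP^{r-1},\Omega^k_{\PP^{r-1}}(1))=0$ for $k\geq1$ to kill the unwanted vertical pieces. The only cosmetic difference is that you organize the argument via the Leray filtration $F^j\subseteq\Omega^p_{\PP(E^*)}$ with graded pieces $\pi^*\Omega^j_X\otimes\Omega^{p-j}_{\PP(E^*)/X}$, whereas the paper passes directly to the holomorphic splitting $\Omega^p_{\PP(E^*)}|_{\pi^{-1}(x)}\simeq\bigoplus_{s+t=p}\Lambda^sT^*_xX\otimes\Omega^t_{\PP(E^*_x)}$ on each fiber; on the product $U\times\PP^{r-1}$ your filtration splits into precisely this direct sum, so the two presentations are interchangeable.
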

\begin{proof}
	Since the multiplier ideal sheaves on both sides of (\ref{local isom induced by interm sheaf}) are identical, it suffices to show the isomorphism of bundles on both sides of (\ref{local isom induced by interm sheaf}).
	
	For any $x\in U$, we have $L_E|_{\pi^{-1}(x)}=\calO_{\PP^{r-1}}(1)$ and $(\pi^*\Lambda^pT^*X)|_{\pi^{-1}(x)}$ forms a product bundle.
	From the decomposition
	\begin{equation*}
		(\Lambda^pT^*\PP(E^*))|_{\pi^{-1}(x)}
		=\bigoplus_{s+t=p} (\pi^*\Lambda^sT^*_xX\otimes\Lambda^tT^*\PP(E^*_x)),
	\end{equation*}
	we deduce that
	\begin{align*}
		H^0(\pi^{-1}(x),\Omega_{\PP(E^*)}^p\otimes L_E)
		=&\bigoplus_{s+t=p}(\Lambda^sT^*_xX)\otimes H^0(\pi^{-1}(x), \Omega_{\PP(E^*_x)}^t\otimes\calO_{\PP^{r-1}}(1) )    \\
		=&(\Lambda^pT^*_xX)\otimes H^0(\pi^{-1}(x),L_E|_{\pi^{-1}(x)}),
	\end{align*}
	where the second equality is owing to a consequence of Bott's vanishing theorem (see Theorem \ref{Bott vanishing}):
	\begin{equation*}
		H^0(\PP^{r-1},\Omega^t_{\PP^{r-1}}\otimes\calO_{\PP^{r-1}}(1))=0,
		\text{ for any }t>0.
	\end{equation*}

\end{proof}
\begin{lemma}
\label{Vanishing of Hq}
	As above notation, then for any $p\geq0$, $q>0$,
	\begin{enumerate}[$(\rm i)$]
		\item \label{vanishing of calF p}
		$H^q(\pi^{-1}(U),\calF^p)=0$ ,
		\item \label{vanishing of Omega PP p}
		$H^q(\pi^{-1}(U),\calG^p)=0$.
	\end{enumerate}
\end{lemma}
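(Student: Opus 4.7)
My overall plan is to compute higher direct images under the proper map $\pi\colon \pi^{-1}(U)\to U$, apply Leray's theorem to reduce to cohomology on the Stein base $U$, and then invoke Cartan's Theorem B.

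For part (i), the projection formula produces
\[
R^k\pi_*\calF^p = \Omega_X^p \otimes R^k\pi_*\bigl(L_E\otimes\calI(h_L^{1+r}\otimes\pi^*\det h^*)\bigr).
\]
Lemma \ref{Direct image vanishing} (case $m=1$) kills the $k>0$ terms, while Proposition \ref{isom of sheaves} identifies the $k=0$ term as $\calE(h)$, so Leray collapses and yields $H^q(\pi^{-1}(U),\calF^p)\simeq H^q(U,\Omega_X^p\otimes\calE(h))$. The strong Nakano hypothesis gives $(L_E^{r+1}\otimes\pi^*\det E^*,\, h_L^{r+1}\otimes\pi^*\det h^*)\geq_\Grif 0$, whence Nadel makes $\calI(h_L^{1+r}\otimes\pi^*\det h^*)$ coherent and Proposition \ref{Coherence}(i) makes $\calE(h)$ coherent. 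Cartan's Theorem B on the Stein set $U$ then closes part (i).

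For part (ii), I intend to mimic the same strategy, but I must absorb the $\Omega^p_{\PP(E^*)}$ factor. I filter $\Omega^p_{\PP(E^*)}$ via the $p$-th wedge of the relative cotangent sequence $0\to\pi^*\Omega_X\to\Omega_{\PP(E^*)}\to\Omega_{\PP(E^*)/X}\to 0$, obtaining graded pieces $\pi^*\Omega^s_X\otimes\Omega^t_{\PP(E^*)/X}$ with $s+t=p$. Combining iterated long exact sequences, the projection formula, and part (i) (which handles $t=0$), the question reduces to proving
\[
R^k\pi_*\bigl(\Omega^t_{\PP(E^*)/X}\otimes L_E\otimes\calI(h_L^{1+r}\otimes\pi^*\det h^*)\bigr)=0 \quad\text{for all } k>0,\, t\geq 1,
\]
together with coherence of its $k=0$ direct image. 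For $x\in U$ with $\det h(x)<+\infty$ the restricted metric on $L_E^{r+1}\otimes\pi^*\det E^*$ is smooth Griffiths positive, so $\calI$ restricts to $\calO_{\PP^{r-1}}$ and the fiber cohomology equals $H^k(\PP^{r-1},\Omega^t_{\PP^{r-1}}\otimes\calO(1))$, which vanishes in every degree $k\geq 0$ whenever $t\geq 1$ or $k\geq 1$ by Bott's theorem \ref{Bott vanishing}. Grauert semicontinuity and Nakayama then place the relevant higher direct images on a proper analytic subset $\pi(Z)\subsetneq U$, where $Z=\supp(\calO/\calI)$.

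The hard step is promoting this generic vanishing to identical vanishing. Using the key identity $K_{\PP(E^*)/X}\otimes F=L_E$ with $F=L_E^{r+1}\otimes\pi^*\det E^*$ and $h_F=h_L^{r+1}\otimes\pi^*\det h^*$ Griffiths semipositive, I rewrite the sheaves at hand in the form $K_{\PP(E^*)}\otimes V\otimes\calI(h_F)$ with $V$ a vector bundle. Since $\pi^{-1}(U)=U\times\PP^{r-1}$ is K\"ahler (Remark \ref{torsionfree rem}(ii)), the plan is to invoke a Matsumura-type torsion-freeness result (Theorem \ref{torsionfree thm}) for $R^k\pi_*$ and then apply R\"uckert's Nullstellensatz (Theorem \ref{Ruckert Nullstellensatz}) to force the vanishing. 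The technical subtlety is that Theorem \ref{torsionfree thm} is stated only for a \emph{line}-bundle factor $F$, whereas my $V$ is genuinely a vector bundle: I plan to circumvent this by bootstrapping via an Euler-sequence-style resolution of $\Omega^t_{\PP^{r-1}}$ to reduce each step to a line-bundle torsion-freeness application. Once the higher direct images are killed and the $k=0$ piece is coherent by Grauert, Leray degenerates and Cartan B on Stein $U$ closes part (ii).
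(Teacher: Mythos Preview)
Your overall strategy (kill $R^{>0}\pi_*$, then Leray + Cartan B on the Stein base) is the paper's strategy too. For part (i) your projection-formula shortcut to Lemma~\ref{Direct image vanishing} is correct and in fact cleaner than the paper's recomputation of the fiber cohomology.

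For part (ii) the paper does \emph{not} filter $\Omega^p_{\PP(E^*)}$; it works with $\calG^p$ directly, observes that on a generic fiber $\calG^p|_{\pi^{-1}(x)}\simeq\bigl(\oplus_{t\le p}\Omega^t_{\PP^{r-1}}\bigr)\otimes\calO(1)$, and kills the fiber cohomology by Bott. It then writes ``similarly'' and tacitly invokes the torsion-freeness step from Lemma~\ref{Direct image vanishing}. So your filtration is extra bookkeeping that repackages the same fiber computation; the substantive difference is that you explicitly flag the point the paper elides, namely that Theorem~\ref{torsionfree thm} is stated only for $K\otimes(\text{line bundle})\otimes\calI$, whereas $\calG^p$ carries the genuine vector-bundle factor $\Omega^p_{\PP(E^*)}$.

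That observation is correct, but your proposed Euler-sequence repair does not close the gap. After resolving $\Omega^t_{\PP^{r-1}}$ you land on sheaves $L_E^{1-j}\otimes\calI(h_F)$, which on $\pi^{-1}(U)$ read as $K_{\PP(E^*)}\otimes G_j\otimes\calI(h_F)$ with $G_j\cong L_E^{\,r+1-j}$ a line bundle. To feed this into Theorem~\ref{torsionfree thm} you need a Griffiths-semipositive metric $h_{G_j}$ with $\calI(h_{G_j})=\calI(h_F)$; the only natural candidate is $h_F$ twisted by a smooth metric on $L_E^{-j}$, whose curvature is $\Theta_{h_F}-j\,\omega_{\mathrm{FS}}$, and the strong-Nakano hypothesis gives only $\Theta_{h_F}\ge 0$, not $\Theta_{h_F}\ge j\,\omega_{\mathrm{FS}}$. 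Even if each $R^k\pi_*\calB_j$ were torsion-free, the connecting maps in the long exact sequences block a straight induction to $R^k\pi_*\calA_t=0$. So both the paper's ``similarly'' and your Euler bootstrap leave the same step unjustified within the toolkit stated; what is actually needed is a version of the injectivity/torsion-freeness input (Theorems~\ref{ZZ inj}--\ref{torsionfree thm}) that tolerates an auxiliary vector-bundle twist.
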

\begin{proof}
	(i) As in the proof of Lemma \ref{Direct image vanishing}, we obtain that
	 \begin{align*}
   	\frac{(R^q\pi_*\calF^p )_x}{\mathfrak{m}_x(R^q\pi_*\calF^p)_x}
   \simeq&   H^q(\pi^{-1}(x), \calF^p|_{\pi^{-1}(x)})\\
   \simeq&   H^q(\PP^{r-1}, \calO_{\PP^{r-1}}(1))=0,
   \end{align*}
   for $x\in U'\cap\{\det h<+\infty\}$ where $U'\subseteq U$ is a Zariski open set in $U$.
 Consequently, we obtain
 \begin{equation*}
 R^q\pi_*\calF^p =0.
  \end{equation*}
	Hence, $H^q(\pi^{-1}(U),\calF^p)=0$.
	
    (ii) Similarly,  for $x\in U'\cap\{\det h<+\infty\}$, where $U'\subseteq U$ is a Zariski open set in $U$,
 we derive that
    \begin{align*}
   	\frac{(R^q\pi_*\calG^p)_x}{\mathfrak{m}_x(R^q\pi_*\calG^p)_x}
   	\simeq& H^q(\PP^{r-1}, (\oplus_{0\leq t\leq p}\Omega_{\PP^{r-1}}^t)\otimes \calO_{\PP^{r-1}}(1))=0,
   \end{align*}
   by Bott's vanishing theorem on $\PP^{r-1}$ (see Theorem \ref{Bott vanishing}).
\end{proof}

\vskip0.7cm

	We now complete the proof of Theorem \ref{Isom thm}-(${\rm ii}$).
	Take a Stein open covering $\{U_j\}$ of $X$ such that $E|_{U_j}$ is trivial.
	Denoted by $V_j:=\pi^{-1}(U_j)$, then $\{V_j\}$ forms an open covering of $\PP(E^*)$.
	By Lemma \ref{Vanishing of Hq}, the covering $\{V_j\}$ is a Leray covering for sheaves $\calF^p$ and $\calG^p$
	such that the sheaf cohomologies $H^*(\PP(E^*),\calF^p)$ and $H^*(\PP(E^*),\calG^p)$ can be computed using the open covering $\{V_j\}$.
	Together with  Lemma \ref{relation of H0}, we arrive at
	\begin{equation}
	\label{isom of Hq for calFp and ideal}
		H^q(\PP(E^*),\calF^p)
		\simeq
		H^q(\PP(E^*),\calG^p).
	\end{equation}
	By combining (\ref{formula isom of E and F^p}) and (\ref{isom of Hq for calFp and ideal}), we derive the result.

\section{Applications}
\label{Applications}
 In this section, we give some applications of Theorem \ref{Isom thm} and Theorem \ref{Isom thm'}.

 \subsection{Coherence and Strong openness}

Suppose $(E,h)\ge_\Grif 0$ holds, Inayama in \cite{In22} raised the question whether the submodule sheaf $\calE(h)$ is coherent.  By leveraging Proposition \ref{isom of sheaves} and Grauert's direct image theorem, it suffices to show the coherence of the ideal sheaf $\mathcal{I}(h_L^{1+r}\otimes\pi^*\det h^*)$.
Moreover, building upon the idea of Inayama \cite{In22} and Zou \cite{Zou22}, one can prove that $\calI(h_L^{1+r}\otimes\pi^*\det h^*)$ is coherent when $\det h$ has a discrete unbounded locus or analytic singularities.
Here the unbounded locus  of $\det h$ is the complement of the maximal subset such that $\det h$ is locally bounded. Therefore, we obtain the following result.

\begin{proposition}
\label{Coherence}
    Let $(E,h)\ge_\Grif 0$.
    \begin{enumerate}[$(\rm i)$]
        \item  If $\calI(h_L^{1+r}\otimes\pi^*\det h^*)$ is coherent, then $\calE(h)$ is coherent.
        \item  If $\det h$ has a discrete unbounded locus or analytic singularities, then  $\calI(h_L^{1+r}\otimes\pi^*\det h^*)$ is coherent.
    \end{enumerate}
\end{proposition}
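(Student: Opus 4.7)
My plan is to treat the two parts separately. For part (i), the approach is essentially formal. Applying Proposition~\ref{isom of sheaves} with $m=1$ yields the identification
\[
\calE(h)\;=\;\pi_{*}\bigl(L_{E}\otimes\calI(h_{L}^{1+r}\otimes\pi^{*}\det h^{*})\bigr).
\]
Under the assumption that $\calI(h_{L}^{1+r}\otimes\pi^{*}\det h^{*})$ is coherent, the twist $L_{E}\otimes\calI(h_{L}^{1+r}\otimes\pi^{*}\det h^{*})$ by the invertible sheaf $L_{E}$ remains coherent, and properness of $\pi:\PP(E^{*})\to X$ together with Grauert's direct image theorem delivers coherence of $\calE(h)$.

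For part (ii), I would work locally on $\PP(E^{*})$ with the weight $\Phi=(r+1)\varphi_{L}+\pi^{*}\log\det h$ of $h_{L}^{1+r}\otimes\pi^{*}\det h^{*}$. Here $\varphi_{L}$ is PSH because $(L_{E},h_{L})\geq_{\Grif}0$ (Proposition~\ref{pro posi}, $L_{E}$ being a quotient of $\pi^{*}E$), while $\pi^{*}\log\det h$ is only plurisuperharmonic since $-\log\det h$ is PSH by Proposition~\ref{pro posi} applied to $\det E$. In particular $\Phi$ itself need not be PSH, which is precisely why Nadel's coherence theorem cannot be invoked directly and why the additional structural hypothesis on $\det h$ is needed. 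The core of the argument is then to exploit this structure to recover coherence.

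When $\det h$ has analytic singularities I would follow Zou's strategy: after passage to a log-resolution $\mu:\widetilde{Y}\to\PP(E^{*})$ of the singular ideal of $\pi^{*}\det h$, the contribution $\mu^{*}\pi^{*}\log\det h$ becomes $-c\log|\calJ|^{2}$ plus a bounded error for some effective divisor $\calJ$ and rational $c>0$, so the resulting multiplier ideal can be computed explicitly and is coherent on $\widetilde{Y}$, and this descends along $\mu$. When $\det h$ has a discrete unbounded locus $Z\subset X$ I would follow Inayama's strategy: on $\PP(E^{*})\setminus\pi^{-1}(Z)$ the factor $\pi^{*}\det h^{*}$ is locally bounded, so $\calI(h_{L}^{1+r}\otimes\pi^{*}\det h^{*})=\calI(h_{L}^{r+1})$ is coherent by Nadel; the main obstacle is transferring coherence across each compact fiber $\pi^{-1}(z_{0})\cong\PP^{r-1}$ over $z_{0}\in Z$, since the bad locus in $\PP(E^{*})$ is positive-dimensional rather than discrete. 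I would address this by approximating $h$ from below by smooth Griffiths positive metrics $h_{\nu}\nearrow h$ (Proposition~\ref{prop lowerbound}), tracking the associated decreasing sequence of coherent ideals, and invoking the Guan-Zhou strong openness theorem uniformly along the $\PP^{r-1}$-fibers to identify the limit as the desired multiplier ideal.
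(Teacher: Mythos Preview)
Your approach to part~(i) is exactly the paper's: invoke Proposition~\ref{isom of sheaves} with $m=1$ and apply Grauert's direct image theorem to the proper map $\pi$. For part~(ii) the paper itself gives no more than a one-line sketch, pointing to the techniques of Inayama~\cite{In22} and Zou~\cite{Zou22} and asserting they can be adapted to prove coherence of $\calI(h_L^{1+r}\otimes\pi^*\det h^*)$; your elaboration is thus more detailed than what the paper provides, and your diagnosis of the central difficulty---that the weight $(r+1)\varphi_L+\pi^*\log\det h$ is a sum of a plurisubharmonic and a plurisuperharmonic term, so Nadel's theorem does not apply directly---is exactly the point.

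One caution on your discrete-locus outline: the approximation $h_\nu\nearrow h$ does not yield a \emph{monotone} sequence of weights for $h_{\nu,L}^{1+r}\otimes\pi^*\det h_\nu^*$, since $h_{\nu,L}^{1+r}$ increases while $\pi^*\det h_\nu^*$ decreases; so the Guan--Zhou strong openness theorem is not directly applicable to this combined family, and ``tracking the associated decreasing sequence of coherent ideals'' as you phrase it does not come for free. Inayama's actual argument for $\calE(h)$ proceeds via H\"ormander-type $L^2$ estimates on small balls around the isolated singular points rather than through monotone approximation, and the adaptation to $\PP(E^*)$ should exploit that $\pi^*\det h^*$ is constant along fibers (so locally in a product chart $U\times\PP^{r-1}$ the bad behavior depends only on the base variable). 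Your analytic-singularities outline via log-resolution is closer to being complete and matches Zou's strategy.
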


The subsequent result is the strong openness of the multiplier submodule sheaf associated to strongly Nakano semi-positive Hermitian vector bundles.

\begin{proposition}
	Let $(E,h)\ge_\SNak 0$ be a holomorphic vector bundle on a complex manifold $X$.
	Then for any $m\geq0$,
		$S^m\calE(S^mh)$
	 satisfies the strong openness property, which means that one has
	\begin{align*}
		\bigcup_j S^m\calE(S^mh_j)
		=S^m\calE(S^mh)
	\end{align*}
	for a decreasing sequence $\{h_j\}$ of Griffiths semi-positive metrics converging to $h$.
\end{proposition}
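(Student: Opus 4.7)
The plan is to reduce the claim, via Proposition \ref{isom of sheaves}, to the strong openness of multiplier ideal sheaves on the projectivized bundle $\PP(E^*)$, and then invoke the Guan--Zhou theorem. First, I would apply Proposition \ref{isom of sheaves} to both $h$ (which is Griffiths semi-positive by the $\SNak$ hypothesis) and each Griffiths semi-positive $h_j$ to obtain the identifications
\[
S^m\calE(S^mh)\simeq\pi_*\bigl(L_E^m\otimes\calI(h_L^{m+r}\otimes\pi^*\det h^*)\bigr)
\]
and the analogous formula with $h$ replaced by $h_j$. Since $\pi$ is proper, $\pi_*$ commutes with filtered colimits of coherent subsheaves, so the desired identity $\bigcup_j S^m\calE(S^mh_j)=S^m\calE(S^mh)$ reduces to the strong openness relation
\[
\bigcup_j \calI(h_{j,L}^{m+r}\otimes\pi^*\det h_j^*)=\calI(h_L^{m+r}\otimes\pi^*\det h^*)
\]
of multiplier ideal sheaves on $\PP(E^*)$.

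Second, the strong Nakano semi-positivity of $h$ guarantees that the local weight $\Phi:=(m+r)\varphi_L+\pi^*\log\det h$ of $h_L^{m+r}\otimes\pi^*\det h^*$ is plurisubharmonic. Using the explicit formula (\ref{induced metric h_L}), the decreasing convergence $h_j\searrow h$ induces $\varphi_{j,L}\nearrow\varphi_L$ and $\log\det h_j\searrow\log\det h$, so the weights $\Phi_j:=(m+r)\varphi_{j,L}+\pi^*\log\det h_j$ converge to $\Phi$ pointwise. I would then apply Guan--Zhou's strong openness theorem \cite{GZ15} to the psh weight $\Phi$: for any local section $\tilde{u}\in\calI(\Phi)$ there is some $\epsilon>0$ with $\int|\tilde{u}|^2e^{-(1+\epsilon)\Phi}<+\infty$ on a smaller neighborhood. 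A careful estimate of $(1+\epsilon)\Phi-\Phi_j$, combined with the uniform bound $|\Phi-\Phi_j|\leq(m+r)(\varphi_L-\varphi_{1,L})+(\log\det h_1-\log\det h)$ and the local $L^1$ regularity of the psh function $\varphi_L$, will then yield $\int|\tilde{u}|^2e^{-\Phi_j}<+\infty$ for $j$ sufficiently large, giving $\tilde{u}\in\calI(\Phi_j)$.

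The main obstacle lies in this last step: the two components of $\Phi_j-\Phi$ have opposite signs (since $\varphi_{j,L}\leq\varphi_L$ but $\log\det h_j\geq\log\det h$), so $\Phi_j$ is not comparable to $\Phi$ in a monotone fashion, and the classical strong openness, typically stated for monotone approximations of psh weights, does not apply in an immediate way. Overcoming this requires absorbing the mixed-sign contributions into the $\epsilon$-slack afforded by Guan--Zhou, essentially splitting $\PP(E^*)$ into the regime where $\Phi$ is bounded (where $\Phi_j\to\Phi$ locally uniformly by Dini's theorem) and the pluripolar tail where the $\epsilon$-improvement dominates; this delicate estimate will be the technical heart of the argument.
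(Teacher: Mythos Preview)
Your reduction via Proposition \ref{isom of sheaves} is correct, and you have accurately identified the central difficulty: on $\PP(E^*)$ the weights $\Phi_j=(m+r)\varphi_{j,L}+\pi^*\log\det h_j$ do not approach $\Phi$ monotonically, so Guan--Zhou's theorem does not apply to the family $\{\Phi_j\}$ directly. However, the tools you propose to bridge this gap are not adequate. The ``uniform bound'' $|\Phi-\Phi_j|\le(m+r)(\varphi_L-\varphi_{1,L})+(\log\det h_1-\log\det h)$ is vacuous: its right-hand side can be $+\infty$, since nothing prevents $\varphi_{1,L}=-\infty$ at points where $\varphi_L$ is finite (the $h_j$ are merely singular Griffiths semi-positive metrics). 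Dini's theorem requires monotone convergence of continuous functions, neither of which is available for $\Phi_j\to\Phi$. And passing from $|\tilde u|^2e^{-(1+\epsilon)\Phi}\in L^1$ to $|\tilde u|^2e^{-\Phi_j}\in L^1$ demands \emph{exponential} integrability of $\varphi_L-\varphi_{j,L}$, not just $L^1$ control of $\varphi_L$; your ``pluripolar tail'' heuristic does not supply this.

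The paper's argument circumvents the non-monotonicity by a two-step manoeuvre you do not anticipate. First, it treats the special one-parameter family $h_\varepsilon:=h(\det h)^\varepsilon$: for this family the induced weight on $\PP(E^*)$ equals $\Phi+m\varepsilon\,\pi^*(-\log\det h)$, a sum of two psh functions increasing to $\Phi$ as $\varepsilon\to0$, so Guan--Zhou applies directly and yields the $L^{1+\varepsilon}$ room $|F|^2_{S^mh}\in L^{1+\varepsilon}_{\loc}(X)$. Second, for a general decreasing family $h_j$ the paper works on the base $X$, not on $\PP(E^*)$: the key algebraic observation is that $S^mh\cdot(\det h)^{-m}$ is bounded and \emph{monotone} in $h$, giving
\[
|F|^2_{S^mh_j}\le|F|^2_{S^mh}\,e^{m(\varphi-\varphi_j)},\qquad \varphi:=-\log\det h,\ \varphi_j:=-\log\det h_j,
\]
with $\varphi_j\nearrow\varphi$ scalar psh on $X$. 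H\"older's inequality then reduces the problem to showing $e^{m(\varphi-\varphi_j)}\in L^q(K)$ for $j$ large, and this is achieved by Demailly's approximation $qm\varphi\le\log\sum_\alpha|f_\alpha|^2+O(1)$ combined with a second application of Guan--Zhou to the increasing scalar family $\varphi_j\nearrow\varphi$. It is precisely this Demailly-approximation step---absent from your proposal---that converts the pluripolar tail into holomorphic data and makes the H\"older estimate go through.
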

\begin{proof}
	As the result is local, we may assume that $E$ is a trivial vector bundle on the unit polydisk $\Delta^n$ and $h\ge {\rm Id}_E$.
	
	Initially, we consider the the special case that $h_\varepsilon=h(\det h )^\varepsilon$, regarded as a new metric on $E$ that converges decreasingly to $h$ as $\varepsilon$ tends to $0$.
	Since
	\begin{equation*}
	    h_{\varepsilon,L}^{m+r}\otimes \pi^*\det h^*_\varepsilon
	=h_L^{m+r}\otimes \pi^*\det h^* \cdot (\det h)^{m\varepsilon}
 \end{equation*}
	is also Griffiths semi-positive and decreasingly converges to $h_L^{m+r}\otimes \pi^*\det h^* $,
it follows from Theorem \ref{Isom thm} and the strong openness conjecture proved by Guan-Zhou \cite{GZ15} that
    \begin{equation*}
        \bigcup_{\varepsilon>0} S^m\calE(S^mh_\varepsilon)
	=S^m\calE(S^mh).
  \end{equation*}	

	For the general case, one can check that $S^mh\cdot (\det h)^{-m}$ is locally bounded from above and  decreases with respect to $h$.
    In other words, if $h'\ge h$, then
    \begin{equation*}
         S^mh'\cdot (\det h')^{-m}\le S^mh\cdot (\det h)^{-m}.
         \end{equation*}
	Take a holomorphic section $F$ of $S^mE$ such that  $|F|^2_{S^mh}$ is integrable on $\Delta^n$ and a compact subset $K\subset \Delta^n$.
	Denote $-\log\det h_j$ and $-\log\det h$ by $\varphi_j$ and $\varphi$ respectively, which are plurisubharmonic functions. This leads to
\begin{align*}
	\int_K |F|^2_{S^mh_j}\le \int_K |F|^2_{S^mh} e^{m\varphi-m\varphi_j}.
\end{align*}

	We will show that $|F|^2_{S^mh}\in L^{1+\varepsilon}(K)$ for some  sufficiently small $\varepsilon>0$
	and for any fixed $q>1$, $e^{m\varphi-m\varphi_j}\in L^q(K)$ for $j>>1$.
	These facts imply $|F|^2_{S^mh_j}\in L^1(K)$ for $j>>1$ by H\"older's inequality.

	On the one hand, We may assume $|F|^2_{S^mh}\cdot (\det h)^{-m}<C$ on $K$ due to the local boundedness of $S^mh\cdot (\det h)^{-m}$,
	then
	\begin{equation*}
	    (|F|^2_{S^mh})^{1+\varepsilon}\le C^\varepsilon |F|^2_{S^mh}(\det h)^{m\varepsilon},
     \end{equation*}
	where the right hand side is integrable on $K$ for  some $\varepsilon>0$ owing to the strong openness  of the special case.
	
	On the other hand, it follows from Demailly's approximation theorem \cite[Theorem 14.2]{D12b} that there are a finite number of holomorphic functions $f_\alpha, \alpha=1,\ldots, N,$ on $\Delta^n$ with $\int_{\Delta^n} |f_\alpha|^2e^{-qm\varphi}=1$ and
	\begin{equation*}
	     qm\varphi\le \log(\sum_\alpha  |f_\alpha|^2)+O(1) \quad\text{on}~ K.
      \end{equation*}
	Then
	$e^{qm\varphi-qm\varphi_j}\le C\sum_{\alpha} |f_\alpha|^2 e^{-qm\varphi_j}$ for some constant $C_K$ on $K$.
	As $qm\varphi_j\le qm\varphi$ increasingly converges to $qm\varphi$,  it follows that	
	 \begin{equation*}
	     \int_K e^{qm\varphi-qm\varphi_j}\le C_K\sum_{\alpha=1}^{N}\int_K  |f_\alpha|^2 e^{-qm\varphi_j}<+\infty
      \end{equation*}
	for $j$ large enough due to the strong openness conjecture proved by Guan-Zhou \cite{GZ15} .
	
\end{proof}	
	\begin{remark}
	  In fact, since the strongly Nakano positivity implies Nakano positivity by Proposition \ref{thm strong implies NP}, we can obtain  strong openness  of multiplier submodule sheaves directly from \cite[Theorem 1.3]{LYZ21}.
	\end{remark}

\subsection{Direct sum and a counterexample}
\label{Direct sum and isomorphism}
In this subsection, we consider the direct sum of holomorphic vector bundles with direct sum metrics.

\begin{corollary}\label{cor direct sum}
	Let $(E_i,h_i)\ge_\SNak 0$ be holomorphic vector bundles of rank $r_i$ on a complex manifold $X$, for $i=1,\ldots,\ell$.
	If $W$ is a holomorphic vector bundle over $X$, then for $q\ge 0$, we have
\begin{align*}
		&H^q(X, W\otimes (\calE_1\oplus\cdots\oplus\calE_\ell)(h_1\oplus\cdots\oplus h_\ell)) \\
		\simeq&
		H^q(\P(E_1^*),\pi_1^*W\otimes L_{E_1}\otimes\calI(h_{L_{E_1}}^{1+r_1}\otimes\pi_1^*\det h_1^*) ) \\
		&\qquad\oplus\cdots\oplus
		H^q(\P(E_\ell^*),\pi_\ell^*W\otimes L_{E_\ell}\otimes\calI(h_{L_{E_\ell}}^{1+r_\ell}\otimes\pi_\ell^*\det h_\ell^*) )
\end{align*}
and
\begin{align*}
&H^q(X, \Omega_X^p\otimes (\calE_1\oplus\cdots\oplus\calE_\ell)(h_1\oplus\cdots\oplus h_\ell))  \\
		\simeq&
		H^q(\P(E_1^*),\Omega_{\PP(E_1^*)}^p\otimes L_{E_1}\otimes\calI(h_{L_{E_1}}^{1+r_1}\otimes\pi_1^*\det h_1^*) ) \\
	  &\qquad\ \oplus\cdots\oplus
	  H^q(\P(E_\ell^*),\Omega_{\PP(E_\ell^*)}^p\otimes L_{E_\ell}\otimes\calI(h_{L_{E_\ell}}^{1+r_\ell}\otimes\pi_\ell^*\det h_\ell^*) ),
\end{align*}
where $\pi_i:\PP(E_i^*)\rightarrow X$ for $i=1,\ldots,\ell$.
\end{corollary}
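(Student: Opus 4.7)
The plan is to reduce Corollary~\ref{cor direct sum} to Theorem~\ref{Isom thm} applied separately to each summand $(E_i, h_i)$, by exploiting the fact that both the multiplier submodule sheaf construction and sheaf cohomology are compatible with finite direct sums. Since each $(E_i, h_i) \ge_\SNak 0$ by hypothesis, Theorem~\ref{Isom thm} applies to each summand individually; no positivity assumption on the whole direct sum is required.

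The first step is to establish the decomposition
\begin{equation*}
(\calE_1 \oplus \cdots \oplus \calE_\ell)(h_1 \oplus \cdots \oplus h_\ell) \;=\; \calE_1(h_1) \oplus \cdots \oplus \calE_\ell(h_\ell).
\end{equation*}
This follows immediately from the definition of the multiplier submodule sheaf: at each point $x \in X$, a section $(u_1, \ldots, u_\ell)$ of $E_1 \oplus \cdots \oplus E_\ell$ satisfies
\begin{equation*}
|(u_1, \ldots, u_\ell)|^2_{h_1 \oplus \cdots \oplus h_\ell} \;=\; \sum_{i=1}^{\ell} |u_i|^2_{h_i},
\end{equation*}
so the left-hand side is locally integrable near $x$ if and only if each summand $|u_i|^2_{h_i}$ is.

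In the second step, I tensor with $W$ (respectively $\Omega_X^p$) and use that the tensor product over $\calO_X$ and sheaf cohomology both commute with finite direct sums, yielding
\begin{equation*}
H^q\bigl(X, W \otimes (\calE_1 \oplus \cdots \oplus \calE_\ell)(h_1 \oplus \cdots \oplus h_\ell)\bigr) \;\simeq\; \bigoplus_{i=1}^{\ell} H^q\bigl(X, W \otimes \calE_i(h_i)\bigr),
\end{equation*}
and an analogous identity with $\Omega_X^p$ in place of $W$. Finally, I apply Theorem~\ref{Isom thm}-(i) with $m=1$ (noting that $S^1 E_i = E_i$ and $S^1\calE_i(S^1 h_i) = \calE_i(h_i)$) to each summand to derive the first isomorphism of the corollary, and Theorem~\ref{Isom thm}-(ii) to each summand to derive the second. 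No essential obstacle arises: every step is either an immediate definitional check or a direct invocation of the already-proven isomorphism from Theorem~\ref{Isom thm}.
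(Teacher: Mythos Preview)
Your proposal is correct and follows essentially the same approach as the paper: decompose the multiplier submodule sheaf of the direct sum via the definition, use that cohomology commutes with finite direct sums, and then apply Theorem~\ref{Isom thm} to each summand. The paper's proof is slightly terser but the logical structure is identical.
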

\begin{proof}
	By the definition of multiplier submodule sheaves, we have
	\begin{align*}
	(\calE_1\oplus\cdots\oplus\calE_\ell)(h_1\oplus\cdots\oplus h_\ell)
	=\calE_1(h_1)\oplus\cdots\oplus\calE_\ell(h_\ell).
	\end{align*}
	Since cohomologies commute with direct sum of sheaves, we obtain
	\begin{align*}
		&H^q(X, W\otimes (\calE_1\oplus\cdots\oplus\calE_\ell)(h_1\oplus\cdots\oplus h_\ell))    \\
	\simeq& H^q(X,W\otimes\calE_1(h_1))\oplus\cdots\oplus H^q(X,W\otimes\calE_\ell(h_\ell))
	\end{align*}
	and
	\begin{align*}
	 &H^q(X, \Omega_X^p\otimes (\calE_1\oplus\cdots\oplus\calE_\ell)(h_1\oplus\cdots\oplus h_\ell))    \\
	\simeq& H^q(X,\Omega_X^p\otimes\calE_1(h_1))\oplus\cdots\oplus H^q(X,\Omega_X^p\otimes\calE_\ell(h_\ell)).
	\end{align*}
	By Theorem \ref{Isom thm}, we complete the proof.
	\end{proof}

In \cite{Wu20}, Wu proposed a problem whether a Nakano positive vector bundle always admits a strongly Nakano positive Hermitian metric. Here we construct a counterexample to disprove his problem.
\begin{example}\label{nak not snak}
	Let $L$ be a ample line bundle and $\underline{\CC}$ be the trivial line bundle over a compact Riemann surface $\Sigma$.
	We consider the vector bundle $E:=L\oplus\underline{\CC}$.
	On the one hand, let $h$ be a positive metric on $L$.
	Then the metric $h\oplus \Id$
	on $E$ is Nakano semi-positive.
	
	On the other hand,  $Y:=\PP(E^*)=\PP(L^*\oplus\underline{\CC})$ is the so-called \textit{Hirzebruch-like surface} with $\ell:=\deg(L^*)<0$. More discussions on the Hirzebruch-like surfaces can be found in \cite{Gau}.	Subsequently, we do not distinguish divisors and line bundles for simplicity and abuse the notations of divisors and line bundles when no confusion can arise.
	
	The Hirzebruch-like surface $Y=\PP(L^*\oplus\underline{\CC})$ can be viewed as the compactification of $L^*$ by adding a point at infinity to each fiber $L_x^*$.
	The union of these points is called the \textit{infinity section}, denoted by $\Sigma_\infty$.
	Whereas the zero section of $L^*$ is regarded as a section of $\PP(L^*\oplus\underline\CC)$ via the inclusion $L^*\subset\PP(L^*\oplus\underline\CC)$, denoted by $\Sigma_0$.
	
	It is well-known that
	\begin{equation*}
		\calO_{\PP(E^*)}(1)
		=\calO_Y(\Sigma_\infty).
	\end{equation*}
	Indeed, as stated in \cite[Proposition 7.12]{Har77}, the projection of $L^*\oplus\underline\CC$ to the trivial line bundle $\underline\CC$ determines a section of $\calO_{\PP(E^*)}(1)$, whose zero divisor is $\Sigma_\infty$.
	Similarly, the projection of $L^*\oplus\underline\CC$ to $L^*$ determines a section of $\calO_{\PP(E^*)}(1)\otimes\pi^*L^*$, and its zero divisor is $\Sigma_0$.
	This leads to the relation
	\begin{equation*}
		\Sigma_0=\Sigma_\infty-\pi^*L.
	\end{equation*}
	Moreover, we also have
	\begin{equation*}
		\Sigma_\infty\cdot\Sigma_\infty=-\ell,\quad
		\Sigma_0\cdot\Sigma_0=\ell,\quad
		\Sigma_0\cdot\Sigma_\infty=0.
	\end{equation*}
	
	Noticing that $\det(E)=\det(L)=L$,
	  we get that
	\begin{align*}
		(r+1)\calO_{\PP(E^*)}(1)-\pi^*\det E
		=3\Sigma_\infty-\pi^*L
		=2\Sigma_\infty+\Sigma_0.
	\end{align*}
	Hence we obtain that
	\begin{align*}
		((r+1)\calO_{\PP(E^*)}(1)-\pi^*\det E)\cdot\Sigma_0
		=\Sigma_0\cdot\Sigma_0=\ell<0.
	\end{align*}
	Therefore, the line bundle $(r+1)\calO_{\PP(E^*)}(1)-\pi^*\det E
$  can not be semi-positive.
In other words, the vector bundle $E=L\oplus\underline\CC$ is not strongly Nakano semi-positive.

By twisting the above vector bundle $E$ with a small ample line bundle, we also provide an example, which is Nakano positive but not strongly Nakano positive.

Concretely, consider the vector bundle $E:=L^N\oplus\underline\CC$ for $N>1$ and
\begin{equation*}
	F:=L^{N+1}\oplus L
	=L\otimes E.
\end{equation*}
 Then we have
\begin{equation*}
\PP(F^*)\simeq\PP(E^*)\quad\text{ and }\quad
	\calO_{\PP(F^*)}(1)
	=\calO_{\PP(E^*)}(1)+\pi^*L.
\end{equation*}
One computes
\begin{align*}
	3\calO_{\PP(F^*)}(1)-\pi^*\det(F)
	=&3\calO_{\PP(E^*)}(1)+3\pi^*L-\pi^*\det(L^{N+1}\oplus L) \\
	=&3\calO_{\PP(E^*)}(1)-(N-1)\pi^*L   \\
	=&3\Sigma_\infty-\frac{N-1}{N}(\Sigma_\infty-\Sigma_0)   \\
	=&(2+\frac{1}{N})\Sigma_\infty+ (1-\frac{1}{N})\Sigma_0.
\end{align*}
Then we obtain that
\begin{align*}
	(3\calO_{\PP(F^*)}(1)-\pi^*\det(F))\cdot\Sigma_0
	=(1-\frac{1}{N})N\deg(L^*)<0.
\end{align*}
Thus, the vector bundle $F$ over $\Sigma$ is not strongly Nakano positive.

\end{example}

\subsection{Injectivity theorem}

In this subsection, we generalize Zhou-Zhu's injectivity theorem for pseudo-effective line bundles (see Theorem \ref{ZZ inj})  to strongly Nakano semi-positive holomorphic vector bundles.
\begin{theorem}
\label{inj for vb}
	Let $(X,\omega)$ be a holomorphically convex K\"ahler manifold and $(E,h)\geq_\SNak0$.
	Assume that
	\begin{equation*}
		(E,h)\geq_\SNak b\sqrt{-1} \Theta_{(M,h_M)}
	\end{equation*}
	for some $0<b<+\infty$,
	and
	\begin{equation*}
		\cur_{(M,h_M)}\ge-C\omega
	\end{equation*}
for some constant $C$.
		Then for any non-zero $s\in H^0(X,M)$ satisfying
	\begin{equation*}
		\sup_\Omega |s|_{h_M}<+\infty
	\end{equation*}
	for every $\Omega\Subset X$, we obtain that the map
	\begin{equation*}
		H^q(X,K_X\otimes S^m\calE(S^mh))\xrightarrow{\otimes s}
		H^q(X,K_X\otimes S^m\calE\otimes \calM(S^mh\otimes h_M))
	\end{equation*}
	is injective for any $q\ge 0$.

\end{theorem}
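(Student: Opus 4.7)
The plan is to push the injectivity statement on $X$ to an injectivity statement on the projective bundle $\PP(E^*)$, where Zhou--Zhu's Theorem \ref{ZZ inj} for line bundles applies directly. The enabling tool is the enhanced main theorem (Theorem \ref{thm twist iso}), which precisely accommodates a twist by a line bundle $(M,h_M)$ of bounded negativity.

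First, I would set $F := L_E^{m+r}\otimes \pi^*\det E^*$ equipped with $h_F := h_L^{m+r}\otimes \pi^*\det h^*$ on $\PP(E^*)$, and apply Theorem \ref{thm twist iso} (with $W=\calO_X$) to obtain natural isomorphisms
\begin{align*}
H^q(X, K_X\otimes S^m\calE(S^mh)) &\simeq H^q(\PP(E^*), K_{\PP(E^*)}\otimes F\otimes \calI(h_F)), \\
H^q(X, K_X\otimes S^m\calE\otimes \calM(S^mh\otimes h_M)) &\simeq H^q(\PP(E^*), K_{\PP(E^*)}\otimes F\otimes \pi^*M\otimes \calI(h_F\otimes \pi^*h_M)).
\end{align*}
The assumption $\cur_{(M,h_M)}\ge -C\om$ is exactly the hypothesis required by Theorem \ref{thm twist iso}. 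Moreover, since these isomorphisms arise from the projection formula applied to Proposition \ref{isom of sheaves} and Remark \ref{rmk iso}, they intertwine multiplication by $s$ on the $X$-side with multiplication by $\pi^*s$ on the $\PP(E^*)$-side.

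Next, I would verify that the geometry of $\PP(E^*)$ meets the hypotheses of Theorem \ref{ZZ inj}. Since $X$ is holomorphically convex and $\pi$ is proper, $\PP(E^*)$ is holomorphically convex; since $X$ is K\"ahler and $L_E$ is relatively ample with a smooth fiberwise positive metric $h_{L,0}$, the form $\pi^*\om+\eps\sqrt{-1}\Theta_{(L_E, h_{L,0})}$ is K\"ahler on $\PP(E^*)$ for $\eps>0$ small. Writing $L_E^{m+r}=L_E^{r+1}\otimes L_E^{m-1}$ and using $(L_E, h_L)\ge_\Grif 0$, the assumption $(E,h)\ge_\SNak 0$ yields $\cur_{(F,h_F)}\ge 0$, while $(E,h)\ge_\SNak b\cur_{(M,h_M)}$ yields $\cur_{(F,h_F)}\ge b\,\pi^*\cur_{(M,h_M)}$ as a current inequality. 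Finally, $\pi^*s$ is a nonzero global section of $\pi^*M$, and for any $\Omega'\Subset \PP(E^*)$, properness of $\pi$ gives $\Omega'\subset \pi^{-1}(\Omega)$ for some $\Omega\Subset X$, so $\sup_{\Omega'}|\pi^*s|_{\pi^*h_M}\le \sup_\Omega|s|_{h_M}<+\infty$.

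Applying Theorem \ref{ZZ inj} on $\PP(E^*)$ with the data $(F,h_F, \pi^*M, \pi^*h_M, \pi^*s)$ then yields, for every $q\ge 0$, the injectivity of the map
\[H^q(\PP(E^*), K_{\PP(E^*)}\otimes F\otimes \calI(h_F))\xrightarrow{\otimes \pi^*s} H^q(\PP(E^*), K_{\PP(E^*)}\otimes F\otimes \pi^*M\otimes \calI(h_F\otimes \pi^*h_M)),\]
which transports back through the isomorphisms above to the desired injectivity on $X$. The main technical point to check carefully is the compatibility of the Theorem \ref{thm twist iso} isomorphism with multiplication by $s$; this reduces to observing that the identification at the level of local sections is simply the pullback $u\mapsto \pi^*u$ coming from $\pi_*(K_{\PP(E^*)/X}\otimes L_E^{m+r}\otimes \pi^*\det E^*)=S^mE$, which manifestly commutes with tensoring by a line bundle section. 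Apart from that, the argument is a clean reduction and no further obstacles are expected.
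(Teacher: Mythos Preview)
Your approach is essentially identical to the paper's: reduce to $\PP(E^*)$ via Theorem \ref{thm twist iso}, apply Zhou--Zhu's Theorem \ref{ZZ inj} there with $(F,h_F)=(L_E^{m+r}\otimes\pi^*\det E^*,\,h_L^{m+r}\otimes\pi^*\det h^*)$ and $(\pi^*M,\pi^*h_M,\pi^*s)$, and transport back. One small slip: to obtain $H^q(X,K_X\otimes S^m\calE(S^mh))\simeq H^q(\PP(E^*),K_{\PP(E^*)}\otimes F\otimes\calI(h_F))$ from Theorem \ref{thm twist iso} you need $W=K_X$, not $W=\calO_X$ (the isomorphisms you wrote are the correct ones for $W=K_X$); with that correction your argument is complete, and in fact more explicit than the paper's regarding the K\"ahler and holomorphically convex structure on $\PP(E^*)$ and the compatibility with $\otimes s$.
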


\begin{proof}
By setting $W=K_X$ in Theorem \ref{Isom thm}-({\rm i}) and observing that
\begin{equation*}
K_{\PP(E^*)/X}:=K_{\PP(E^*)}\otimes\pi^*K_X^{-1}=L_E^{-r}\otimes\pi^*\det E,
\end{equation*}
we derive the relationship
	\begin{equation*}
		H^q(X,K_X\otimes S^m\calE(S^mh))
		\simeq H^q(\PP(E^*),K_{\PP(E^*)}\otimes F\otimes\calI(h_F)),
	\end{equation*}
	where $F=L_E^{m+r}\otimes\pi^*\det E^*$ and $h_F=h_L^{m+r}\otimes\pi^*\det h^*$.

	Similarly, due to  $\cur_{(M,h_M)}\ge-C\omega$,  we obtain by Theorem \ref{thm twist iso} that
	\begin{equation*}
		H^q(\PP(E^*),K_{\PP(E^*)}\otimes F\otimes \pi^*M\otimes\calI(h_F\otimes \pi^*h_M))
        \simeq
        H^q(X,K_X\otimes S^m\calE\otimes \calM(S^mh\otimes h_M)).
	\end{equation*}

	In addition, since $(E,h)\geq_\SNak0$ and $(E,h)\geq_\SNak\sqrt{-1} b\Theta_{(M,h_M)}$, it follows from Theorem \ref{ZZ inj}  that $\pi^*s$ induces a injective morphism
	\begin{equation*}
		H^q(\PP(E^*),K_{\PP(E^*)}\otimes F\otimes\calI(h_F))\xrightarrow{\otimes \pi^*s}
		H^q(\PP(E^*),K_{\PP(E^*)}\otimes F\otimes \pi^*M\otimes(h_F\otimes \pi^*h_M))
	\end{equation*}
	 for any $q\geq0$.
	Hence we conclude that  the map
	\begin{equation*}
		H^q(X,K_X\otimes S^m\calE(S^mh))\xrightarrow{\otimes s}
		H^q(X,K_X\otimes S^m\calE\otimes \calM(S^mh\otimes h_M))
	\end{equation*}
	is injective for any $q\ge 0$.
\end{proof}

	Following a similar process to that in Theorem \ref{torsionfree thm}, we also deduce that
\begin{corollary}[Torsion-freeness]
\label{Cor torsion-free}
	Let $X$ be a K\"ahler manifold
	and $f$ be a proper holomorphic surjective morphism from $X$ to a complex analytic variety $Y$.
	Suppose that $(E,h)\geq_\SNak0$ is a holomorphic vector bundle over $X$.
	Then
	 $R^kf_*(K_X\otimes S^m\calE(S^mh))$ is torsion-free for any $k\geq0$.
	In particular, if $k>\dim X-\dim Y$, then
	\begin{equation*}
 R^kf_*(K_X\otimes S^m\calE(S^mh))=0.
 \end{equation*}
\end{corollary}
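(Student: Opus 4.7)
The plan is to mimic the proof of Theorem \ref{torsionfree thm}, substituting the vector-bundle injectivity theorem (Theorem \ref{inj for vb}) for Zhou--Zhu's line-bundle injectivity theorem. The argument is local on the base $Y$. Fix $y\in Y$, and suppose for contradiction that there exist a nonzero germ $t\in\calO_{Y,y}$ and $s\in R^kf_*(K_X\otimes S^m\calE(S^mh))_y$ with $t\cdot s=0$. Shrinking to a small Stein neighborhood $U\ni y$, we may represent $s$ by a class in $H^k(U',K_X\otimes S^m\calE(S^mh))$ with $U':=f^{-1}(U)$, and $t$ by a nonzero holomorphic function on $U$. Since $f$ is proper and $U$ is Stein, $U'$ is holomorphically convex, and the Kähler form on $X$ restricts to one on $U'$, so $U'$ is a holomorphically convex Kähler manifold.

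Next I apply Theorem \ref{inj for vb} on $U'$ with $(M,h_M)=(U'\times\CC,1)$ and global section $f^*t\in H^0(U',M)$, which is nonzero and trivially bounded. Because $\cur_{(M,h_M)}=0$, both hypotheses $(E,h)\geq_\SNak b\cur_{(M,h_M)}$ and $\cur_{(M,h_M)}\ge -C\omega$ hold for every $b,C>0$ (only $(E,h)\geq_\SNak 0$ is needed, which is given). Theorem \ref{inj for vb} then yields that the map
\[
H^k(U',K_X\otimes S^m\calE(S^mh))\xrightarrow{\otimes f^*t}H^k(U',K_X\otimes S^m\calE(S^mh))
\]
is injective. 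Since $t\cdot s=0$ in the stalk translates to $f^*t\cdot s=0$ in cohomology after further shrinking, injectivity forces $s=0$. As $y\in Y$ was arbitrary, $R^kf_*(K_X\otimes S^m\calE(S^mh))$ has no torsion.

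For the vanishing statement when $k>\dim X-\dim Y$, the generic fibre of $f$ has dimension $\dim X-\dim Y$, so by Grauert's upper semicontinuity (Theorem \ref{Grauert's upper semicontinuity}) combined with the fact that the $k$-th Dolbeault cohomology of a manifold vanishes above its complex dimension, $R^kf_*(K_X\otimes S^m\calE(S^mh))$ is zero on a nonempty Zariski open subset of $Y$, hence its support is contained in a proper analytic subset of $Y$ by Remmert's proper mapping theorem (Theorem \ref{Remmert's proper mapping theorem}). A torsion-free coherent sheaf supported on a proper analytic subset must vanish, by Rückert's Nullstellensatz (Theorem \ref{Ruckert Nullstellensatz}), exactly as in Remark \ref{torsionfree rem}-(i).

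The only delicate point is the direct invocation of Theorem \ref{inj for vb} on $U'$: one must check that the open covering version of Remark \ref{torsionfree rem}-(ii) for strongly Nakano metrics is valid, i.e.\ that holomorphic convexity plus Kählerness of $U'$ truly supplies the hypotheses required by the underlying Zhou--Zhu injectivity theorem after its passage through Theorem \ref{Isom thm} and Theorem \ref{thm twist iso}. Once this compatibility is confirmed --- and it is, because all of those intermediate results were formulated precisely in this holomorphically convex Kähler setting --- the argument reduces to the template established for line bundles.
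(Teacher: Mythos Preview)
Your proposal is correct and is exactly the approach the paper indicates: it simply says ``Following a similar process to that in Theorem \ref{torsionfree thm}'', and you have carried this out by replacing Zhou--Zhu's line-bundle injectivity with Theorem \ref{inj for vb} applied to the trivial pair $(M,h_M)=(\calO_{U'},1)$ on the holomorphically convex K\"ahler preimage $U'=f^{-1}(U)$. One small remark on the ``In particular'' clause: your invocation of Theorem \ref{Grauert's upper semicontinuity} is slightly off since that statement is for locally free $V$ and submersions, whereas $K_X\otimes S^m\calE(S^mh)$ is only coherent and $f$ need not be smooth over a singular $Y$; the cleaner justification is that fiber dimension is generically $\dim X-\dim Y$, coherent cohomology of a compact complex space vanishes above its dimension, so $R^kf_*$ is zero over the locus of minimal fiber dimension, and then Remark \ref{torsionfree rem}-(i) applies as you say.
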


Let $M$ be a holomorphic line bundle over a Hermitian manifold $(X,\omega)$. Suppose that $h_M$ is a singular metric on $M$ satisfying that
\begin{equation*}
		\cur_{(M,h_M)}\ge-C\omega
	\end{equation*}
for some constant $C$. We denote
\begin{equation*}
	H^0_{{\rm bdd}, h_M}(X,M)
	:=\{s\in H^0(X,M)~|~\sup_\Omega|s|_{h_M}<+\infty {\textup{\ for\ any }} \Omega\Subset X\}.
\end{equation*}
Assume that $\{h_k\}_{k=1}^{+\infty}$ is a sequence of singular metrics on holomorphic line bundle $M$.
The \textit{generalized Kodaira dimension} is defined as
\begin{equation*}
\kappa_{\bdd}(M, \{h_k\}_{k=1}^{+\infty}):=
	\begin{cases}
		-\infty, \qquad\qquad\qquad\qquad \text{if } H^0_{\bdd,(h_k)^k}(X,M^k)=0 \text{ for }k>>0, \\
		\sup\left\{m\in\ZZ\left|\limsup\limits_{k\to\infty}\frac{\dim H^0_{\bdd,(h_k)^k}(X,M^k)}{k^m}>0\right.\right\}, \quad \text{otherwise}.
	\end{cases}
\end{equation*}

	We can derive the following vanishing result as a corollary of Theorem~ \ref{inj for vb}.
\begin{theorem}\label{thm Kbdd vanish}
	Let $(X,\omega)$ be a projective manifold and $(E,h)\geq_\SNak0$ be a holomorphic vector bundle over $X$.
	Consider $M$ as a holomorphic line bundle equipped with a sequence  of singular metrics $\{h_k\}_{k=1}^{+\infty}$ over $X$.
	Assume that the following two inequalities hold on $X$ in the sense of currents:
	\begin{enumerate}[$(\rm i)$]
		\item $(E,h)\ge_\SNak \varepsilon_k\cur_{(M,h_k)}$ for any $k\ge1$ and some sequence of positive numbers
  $\{\varepsilon_k\}_{k=1}^{+\infty}$,
		\item $k\cur_{(M,h_k)}\ge-C\omega$ for any $k\in\NN$ and some constant $C>0$.
	\end{enumerate}
	Then we have
	\begin{equation*}
		H^q(X,K_X\otimes S^m\calE(S^mh))=0
	\end{equation*}
	for $q>n-\kappa_\bdd(M,\{h_k\}_{k=1}^{+\infty})$.
\end{theorem}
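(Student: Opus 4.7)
The plan is to iterate Theorem~\ref{inj for vb} over the powers $M^k$ and then play the resulting injectivity against the growth of $d_k := \dim H^0_{\bdd, h_k^k}(X, M^k)$. For each $k \ge 1$ I would apply Theorem~\ref{inj for vb} with $(M, h_M)$ replaced by $(M^k, h_k^k)$: hypothesis (i) rewrites as $(E,h) \ge_\SNak (\varepsilon_k/k)\, \cur_{(M^k, h_k^k)}$, so the required positive constant is $b = \varepsilon_k/k$, while (ii) is precisely $\cur_{(M^k, h_k^k)} \ge -C\omega$. Hence for every nonzero $s \in H^0_{\bdd, h_k^k}(X, M^k)$ one obtains an injection
\[
\otimes s\colon H^q(X, K_X \otimes S^m\calE(S^mh)) \hookrightarrow H^q(X, K_X \otimes S^m\calE \otimes \calM(S^mh \otimes h_k^k)).
\]

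I would then argue by contradiction: assume the source is nonzero and fix $0 \ne \alpha$ in it. For each $k$ the assignment $\Phi_k : s \mapsto \alpha \otimes s$ is linear from $H^0_{\bdd, h_k^k}(X, M^k)$ to the target above, and it is injective, since $\Phi_k(s_1) = \Phi_k(s_2)$ with $s_1 \ne s_2$ would, by the injectivity of $\otimes (s_1 - s_2)$ applied to the nonzero class $\alpha$, force $\alpha\otimes(s_1-s_2) \ne 0$, a contradiction. Consequently
\[
d_k \le \dim H^q(X, K_X \otimes S^m\calE \otimes \calM(S^mh \otimes h_k^k)) \qquad \text{for every } k \ge 1.
\]

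The heart of the argument, and the main obstacle, is to upper bound the right-hand dimension by $C\, k^{n-q}$ for $k$ large. Using Theorem~\ref{thm twist iso}, I would transfer the target to
\[
H^q\bigl(\PP(E^*),\, K_{\PP(E^*)} \otimes L_E^{m+r} \otimes \pi^*(\det E^* \otimes M^k) \otimes \calI\bigl(h_L^{m+r} \otimes \pi^*(\det h^* \otimes h_k^k)\bigr)\bigr),
\]
which is the $H^q$ of a coherent subsheaf of $K_{\PP(E^*)} \otimes L_E^{m+r} \otimes \pi^*(\det E^* \otimes M^k)$. Since $X$ (hence $\PP(E^*)$) is projective, and since the bound $k\cur_{(M,h_k)} \ge -C\omega$ makes $M$ asymptotically nef, one expects a uniform Castelnuovo--Mumford/Serre-vanishing estimate of order $k^{n-q}$. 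The delicate point is that both the ambient line bundle and the multiplier ideal change with $k$, so the implied constant must be shown to be independent of $k$; I would attempt this either via a Serre-vanishing argument with an ample twist that is subsequently absorbed using the fibrewise ampleness of $L_E$, or by extracting a limit singular metric on $M$ from $\{h_k\}$ and invoking asymptotic cohomology in the style of Demailly.

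Combining the two bounds gives $d_k = O(k^{n-q})$ as $k \to \infty$, whence $\limsup_k d_k / k^m = 0$ for every $m > n-q$. By the definition of $\kappa_\bdd$ this forces $\kappa_\bdd(M, \{h_k\}) \le n - q$, contradicting $q > n - \kappa_\bdd(M, \{h_k\})$ and completing the proof.
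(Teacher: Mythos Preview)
Your overall strategy coincides with the paper's: fix a nonzero class, use Theorem~\ref{inj for vb} with $(M^k,h_k^k)$ to produce for each $k$ an injection of $H^0_{\bdd,h_k^k}(X,M^k)$ into $H^q(X,K_X\otimes S^m\calE\otimes\calM^k(S^mh\otimes h_k^k))$, and then contradict the definition of $\kappa_\bdd$ by bounding the latter dimension by $O(k^{n-q})$.

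The only genuine gap is the step you yourself flag as ``the heart of the argument'': the uniform bound $\dim H^q(\cdots)=O(k^{n-q})$. Your two suggestions remain vague, and the second one (passing to a limit metric of the $h_k$) is unlikely to succeed since nothing ties the $h_k$ together beyond the curvature lower bound. The paper fills this gap with a separate lemma proved by induction on $\dim X$, working on $X$ rather than on $\PP(E^*)$. Writing $\calI_k:=S^m\calE\otimes\calM^k(S^mh\otimes h_k^k)$, one chooses a very ample smooth divisor $H$ of a \emph{fixed} linear equivalence class (large enough that $\cur_{(H,h_H)}\ge(C+1)\omega$) which, for each $k$, misses the associated primes of the torsion-free sheaf $\calI_k$; this yields the short exact sequence
\[
0\longrightarrow K_X\otimes\calI_k\longrightarrow K_X\otimes H\otimes\calI_k\longrightarrow (K_X\otimes H\otimes\calI_k)|_H\longrightarrow 0.
\]
The middle term has vanishing $H^q$ for all $q\ge1$: via Theorem~\ref{Isom thm} it becomes a cohomology group on $\PP(E^*)$ to which Cao's vanishing (Theorem~\ref{K-V-N-C}) applies, because the ample twist $\pi^*H$ together with $(E,h)\ge_\SNak 0$ and $k\cur_{(M,h_k)}\ge -C\omega$ forces the numerical dimension of the relevant line bundle to be maximal. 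The long exact sequence then gives $\dim H^q(X,K_X\otimes\calI_k)\le\dim H^{q-1}(H,(K_X\otimes H\otimes\calI_k)|_H)$, and adjunction $(K_X\otimes H)|_H=K_H$ sets up the induction on dimension, with the base case $n=1$ handled by taking $H$ to be $N$ points (with $N$ independent of $k$) away from the support of $S^mE\otimes M^k/\calI_k$. This is close in spirit to your ``ample twist subsequently absorbed'' idea, but the absorption happens through Cao's vanishing on $\PP(E^*)$ rather than through fibrewise ampleness, and the passage to lower dimension is by hyperplane restriction on $X$.
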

\begin{proof}
	Denote $\kappa_\bdd(M,\{h_k\}_{k=1}^{+\infty})$ by $\kappa$.
   There is nothing if $\kappa\le0$.

	We only need to consider the case $\kappa>0$. By contradiction, suppose that there exists a non-zero cohomology class
	\begin{equation*}
		\xi\in H^q(X,K_X\otimes S^m\calE(S^mh))
	\end{equation*}
	for some  $q>n-\kappa$.

	For $k>0$,  the following map $T_\xi$ induced by the tensor product with $\xi$
	\begin{equation*}
		H^0_{\bdd,(h_k)^k}(X,M^k)\longrightarrow
		H^q(X,K_X\otimes S^m\calE\otimes \calM^k(S^mh\otimes(h_k)^k))
	\end{equation*}
	 is a linear map.
	
	By assumptions that $(E,h)\ge_\SNak 0$, $(E,h)\ge_\SNak \varepsilon_k\cur_{(M,h_k)}$ and $k\cur_{(M,h_k)}\ge-C\omega$,  applying Theorem \ref{inj for vb} for the above linear map $T_\xi$,
	we obtain that $T_\xi$ is injective.
	Hence, we derive
	\begin{equation*}
		\dim H^0_{\bdd,(h_k)^k}(X,M^k)
		\leq
		\dim H^q(X,K_X\otimes S^m\calE\otimes \calM^k(S^mh\otimes(h_k)^k)).
	\end{equation*}
	According to the following Lemma \ref{asymp of Hq}, one has
		\begin{equation*}
			\dim H^q(X,K_X\otimes S^m\calE\otimes \calM^k(S^mh\otimes(h_k)^k))
			=O(k^{n-q}) \qquad\text{as }k\rightarrow+\infty.
		\end{equation*}
	Then we have
	\begin{equation*}
		\dim H^0_{\bdd,(h_k)^k}(X,M^k)
		=O(k^{n-q}) \qquad\text{as }k\rightarrow+\infty,
	\end{equation*}
	which implies that $\kappa\le n-q$, contradicting the assumption $q>n-\kappa$.
\end{proof}
\begin{lemma}
\label{asymp of Hq}
	With the above notation, we conclude that
	\begin{equation*}
			\dim H^q(X,K_X\otimes S^mE\otimes M^k(S^mh\otimes(h_k)^k))
			=O(k^{n-q}) \qquad\text{as }k\rightarrow+\infty.
		\end{equation*}
\end{lemma}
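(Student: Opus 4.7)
The plan is to transport the left-hand side into a line-bundle cohomology on $\PP(E^*)$ via Theorem \ref{thm twist iso}, and then estimate the latter using a Demailly--Bonavero-type asymptotic holomorphic Morse inequality.

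First, I would apply Theorem \ref{thm twist iso} with $W = K_X$ and the auxiliary line bundle $(M, h_M)$ replaced by $(M^{\otimes k}, (h_k)^k)$. The first curvature hypothesis $\cur_{(M^k,(h_k)^k)} = k\cur_{(M,h_k)} \ge -C\omega$ is precisely assumption (ii) of the present lemma. The second hypothesis, $\cur_{(L_E^{m+r}, h_L^{m+r})} - \pi^*\cur_{(\det E, \det h)} + k\pi^*\cur_{(M, h_k)} \ge -C'\pi^*\omega$, follows by combining $(E, h) \ge_\SNak 0$ --- which forces the first two terms to form a semi-positive current on $\PP(E^*)$, using $(L_E^{r+1} \otimes \pi^*\det E^*, h_L^{r+1} \otimes \pi^*\det h^*) \ge_\Grif 0$ together with the Griffiths semi-positivity of $(L_E, h_L)$ --- with assumption (ii) for the remaining term. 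This yields an isomorphism
\begin{equation*}
H^q\bigl(X, K_X \otimes S^m\calE \otimes \calM^k(S^mh \otimes (h_k)^k)\bigr) \simeq H^q\bigl(\PP(E^*), K_{\PP(E^*)} \otimes F_k \otimes \calI(h_{F_k})\bigr),
\end{equation*}
where $F_k := L_E^{m+r} \otimes \pi^*\det E^* \otimes \pi^*M^k$ carries the natural metric $h_{F_k} := h_L^{m+r} \otimes \pi^*\det h^* \otimes \pi^*(h_k)^k$.

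Second, I would estimate the right-hand side. The curvature $\cur_{(F_k, h_{F_k})}$ decomposes as a fixed, bounded semi-positive $(1,1)$-current plus the pullback $k\pi^*\cur_{(M, h_k)}$ of a current from the $n$-dimensional base $X$, and $(\pi^*\cur_{(M, h_k)})^j \equiv 0$ for $j > n$. After Demailly's regularization of $h_k$ (to work within the analytic-singularities framework), Bonavero's singular holomorphic Morse inequality on $\PP(E^*)$ controls the dimension of $H^q(\PP(E^*), K_{\PP(E^*)} \otimes F_k \otimes \calI(h_{F_k}))$ by an integral of $(-1)^q (\cur_{F_k})^{n+r-1}$ over a Hodge-signature region. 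Fibrewise integration along $\pi$ then reduces this to a base integral whose leading asymptotic is $O(k^{n-q})$: the negative eigenvalues of $\cur_{F_k}$ come only from the base-direction $\cur_{(M, h_k)}$, while $(L_E, h_L)$ is strictly positively curved along fibers $\PP^{r-1}$, so at most $n$ factors of $k\pi^*\cur_{(M, h_k)}$ can appear in non-vanishing top-degree forms.

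The main obstacle is obtaining the sharp exponent $n - q$, rather than the naive $(n + r - 1) - q$ that a direct application of Demailly's Morse inequality on $\PP(E^*)$ would yield. The improvement hinges on $\pi^*M$ having numerical dimension at most $n$ on $\PP(E^*)$ and on $L_E$ being positive along fibers, so that no fiber direction contributes either negative eigenvalues or unbounded $k$-growth. A careful bookkeeping of these signature contributions, or alternatively an appeal to an asymptotic cohomology estimate adapted to pullback line bundles in the spirit of \cite{MM07, D12b}, is what makes this precise.
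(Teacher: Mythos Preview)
Your proposal has a genuine gap in the second step. Bonavero's singular Morse inequality (Theorem \ref{Bonavero's singular H-M-Ineq}) is an asymptotic statement for high powers $L^m$ of a \emph{fixed} line bundle equipped with a \emph{fixed} metric with analytic singularities. In your situation, the line bundle $F_k = L_E^{m+r}\otimes\pi^*\det E^*\otimes\pi^*M^k$ is not a $k$-th power of anything, the metric $(h_k)^k$ on the $\pi^*M^k$ factor genuinely varies with $k$, and there is no assumption that $h_k$ has analytic singularities. Demailly regularization does not save this: regularizing $h_k$ changes the multiplier ideal sheaves and hence the cohomology groups you are trying to bound, and in any case one still lacks a single line bundle whose powers are being taken. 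Your final paragraph acknowledges that the naive application yields only $O(k^{n+r-1-q})$ and appeals to a ``careful bookkeeping'' of signature contributions or an unspecified result ``in the spirit of \cite{MM07,D12b}''; neither of these is made precise, and no such off-the-shelf asymptotic cohomology estimate for pullback line bundles with varying metrics is cited or proved.

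The paper's proof proceeds by an entirely different and more elementary route: induction on $\dim X$ via hyperplane sections, exploiting the projectivity of $X$ (stated in Theorem \ref{thm Kbdd vanish}). One chooses a very ample smooth divisor $H$ avoiding the associated primes of $\calI_k := S^m\calE\otimes\calM^k(S^mh\otimes(h_k)^k)$, so that the short exact sequence
\[
0 \to K_X\otimes\calI_k \to K_X\otimes H\otimes\calI_k \to (K_X\otimes H\otimes\calI_k)|_H \to 0
\]
is exact. Choosing $h_H$ positive enough that $\cur_{(H,h_H)} \ge (C+1)\omega$, the middle term has vanishing $H^q$ for $q>0$ by the isomorphism of Theorem \ref{Isom thm} combined with Cao's vanishing (Theorem \ref{K-V-N-C}), since the relevant numerical dimension becomes maximal. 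The long exact sequence then gives $\dim H^q(X,K_X\otimes\calI_k)\le \dim H^{q-1}(H,K_H\otimes\calI_k|_H)$ via adjunction, and induction on dimension finishes the job. The base case $n=1$ is checked by hand. This hyperplane-section argument is where the sharp exponent $n-q$ comes from, and it bypasses the Morse inequality machinery entirely.
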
	
\begin{proof}
We prove by induction on the dimension of manifolds.
For simplicity, we denote
\begin{equation*}
	\calI_k:=S^m\calE\otimes \calM^k(S^mh\otimes(h_k)^k).
\end{equation*}

	For a given $k>0$, since $\calI_k$ is a torsion-free coherent sheaf by Corollary \ref{Cor torsion-free}, we can take a very ample smooth divisor $H$ so  that $H$ does not contain any component of subvarieties defined by the associated primes of $\calI_k$. Then
	we obtain a short exact sequence
	\begin{equation*}
		0\rightarrow K_X\otimes \calI_k
		\xrightarrow{\iota} K_X\otimes H\otimes\calI_k
		\rightarrow (K_X\otimes H\otimes\calI_k)|_H
		\rightarrow0.
	\end{equation*}

	The above short exact sequence induces a long exact sequence
	\begin{equation*}
		\cdots\rightarrow H^{q-1}(H,(K_X\otimes H\otimes\calI_k)|_H)
		\rightarrow H^q(X,K_X\otimes\calI_k)
		\rightarrow H^q(X,K_X\otimes H\otimes\calI_k) \rightarrow\cdots.
	\end{equation*}
	Using Theorem \ref{Isom thm}, we deduce the following isomorphism
	\begin{align}\label{isom for calIk}
		&H^q(X,K_X\otimes H\otimes\calI_k) \\
		\simeq
		&H^q(\PP(E^*),K_{\PP(E^*)}\otimes  L_E^{m+r}\otimes\pi^*(\det E^*\otimes H\otimes M^k)\otimes\calI(h_L^{m+r}\otimes\pi^*(\det h^*\otimes(h_k)^k)) )\nonumber.
	\end{align}
	Since $(E,h)\ge_\SNak0$ and $ k\cur_{(M,h_k)}\ge-C\omega$, we can take $H$ very ample such that
	\begin{equation*}
		\ndim( L_E^{m+r}\otimes\pi^*(\det E^*\otimes H\otimes M^k),h_L^{m+r}\otimes\pi^*(\det h^*\otimes h_H\otimes(h_k)^k))
		=n+r-1,
	\end{equation*}
	where $h_H$ is a smooth Griffiths positive metric on $H$ satisfying
\begin{equation*}
    \cur_{(H,h_H)}\ge (C+1)\omega.
    \end{equation*}
	Then  for  $q,k>0$, it follows from Theorem \ref{K-V-N-C} and (\ref{isom for calIk}) that
	\begin{equation*}
		H^q(X,K_X\otimes H\otimes\calI_k) =0.
	\end{equation*}
	Together with the above long exact sequence, we get
	that
\begin{equation*}
    \dim H^0(X,K_X\otimes\calI_k)\leq  O(k^n)
    \end{equation*}
thanks to the Riemann-Roch formula.
Additionly, we also obtain
	\begin{equation}
	\label{induction formu}
	\dim H^q(X,K_X\otimes\calI_k)
		\leq \dim H^{q-1}(H,(K_X\otimes H\otimes\calI_k)|_H)\leq O(k^{n-q})
	\end{equation}
 for $q>0$ by induction and the adjunction formula $(K_X\otimes H)|_H=K_H$.

	When $n=1$, we can take $H=\sum_{j=1}^Np_j$ with $p_j\notin\supp(S^mE\otimes M^k/\calI_k)$ and $N$ independent of $k$.
 Furthermore, we have
	\begin{align*}
		\dim H^1(X,K_X\otimes\calI_k)
		\leq &\dim H^0(H, (K_X\otimes H\otimes\calI_k)|_H)\\
		= &\sum_{j=1}^N \dim H^0(p_j, (K_X\otimes H\otimes\calI_k)|_H)  \\
		=& \sum_{j=1}^N \dim H^0(p_j, (K_X\otimes H\otimes S^mE\otimes M^k)|_H)\\
		=&N {\textup{\ rank}}(S^m E).
	\end{align*}
\end{proof}

  Theorem \ref{thm Kbdd vanish} contains two distinctive cases.

  The first case occurs when  $h_k= h_M, k\in\ZZ_+$ for some fixed singular metric $h_M$ on $M$. Then the curvature assumptions ({\rm i}) and  ({\rm ii}) are equivalent to the single assumption
  \begin{equation*}
      (E,h)\ge_\SNak \varepsilon\cur_{(M,h_M)}\ge0
      \end{equation*}
   for some positive number $\eps$.  If $h_M$ is further a smooth Griffiths positive metric, then
   \begin{equation*}
       \kappa_\bdd(M,\{h_k\}_{k=1}^{+\infty})=n.
       \end{equation*}
   Consequently, Theorem \ref{thm Kbdd vanish} transforms into a Nadel-type vanishing theorem on projective manifolds.

   The second case arises when $M$ is numerically effective which means that there exists a sequence of smooth metrics $\{h_k\}_{k=1}^{+\infty}$ such that
   \begin{equation*}
       \cur_{(M,h_k)}\ge -\frac{1}{k}\omega.
       \end{equation*}
   In this case, the curvature assumptions  ({\rm ii}) is automatically fulfilled, and   $\kappa_\bdd(M,\{h_k\}_{k=1}^{+\infty})$ is equal to the usual Kodaira dimension of $M$.

\subsection{Vanishing theorems}
\label{Nadel-Cao type vanishing theorem}

It is clear that a Kawamata-Viehweg-Nadel-type vanishing theorem for holomorphic vector bundles with strongly Nakano semi-positive metrics can be derived from Theorem \ref{Isom thm} and Theorem \ref{K-V-N-C}.
\begin{corollary}
	\label{Cao type vanishing}
	Let $X$ be a compact K\"ahler manifold and $(E, h)\geq_\SNak 0$ be a holomorphic vector bundle over $X$.
	Then for any
	$m\ge1$, we have
	\begin{equation*}
	H^q(X,K_X\otimes S^m\calE(S^mh))=0
	\end{equation*}
	for  $ q\ge n+r-{\rm nd}(L_E^{m+r}\otimes\pi^*\det E^*, h_L^{m+r}\otimes\pi^*\det h^*)$.
\end{corollary}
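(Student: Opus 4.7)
The plan is to reduce the vector-bundle vanishing on $X$ to the line-bundle vanishing of Cao on the projectivization $\PP(E^*)$, via the main isomorphism theorem.

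First, I would apply Theorem \ref{Isom thm}-(i) with $W=K_X$ to obtain
\[
H^q(X, K_X\otimes S^m\calE(S^mh))
\simeq H^q\bigl(\PP(E^*), \pi^*K_X\otimes L_E^{m}\otimes\calI(h_L^{m+r}\otimes\pi^*\det h^*)\bigr).
\]
Next, using the relative canonical bundle formula $K_{\PP(E^*)/X}=L_E^{-r}\otimes\pi^*\det E$, I would rewrite the twisting line bundle on the right as
\[
\pi^*K_X\otimes L_E^{m} = K_{\PP(E^*)}\otimes K_{\PP(E^*)/X}^{-1}\otimes L_E^m = K_{\PP(E^*)}\otimes L_E^{m+r}\otimes\pi^*\det E^*,
\]
which transforms the target cohomology into
\[
H^q\bigl(\PP(E^*),\, K_{\PP(E^*)}\otimes F\otimes\calI(h_F)\bigr),
\quad (F,h_F):=(L_E^{m+r}\otimes\pi^*\det E^*,\, h_L^{m+r}\otimes\pi^*\det h^*).
\]

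Now $(F,h_F)$ is Griffiths semi-positive by the very definition of $(E,h)\ge_\SNak 0$, and $\PP(E^*)$ is a compact K\"ahler manifold of dimension $n+r-1$ (since $X$ is compact K\"ahler, a standard construction using $\omega + C\cur_{(L_E,h_L')}$ for a smooth Hermitian metric $h_L'$ and sufficiently large $C$ produces a K\"ahler form on $\PP(E^*)$). Thus Theorem \ref{K-V-N-C} applies on $\PP(E^*)$ to the line bundle $(F,h_F)$ and yields
\[
H^q\bigl(\PP(E^*),\, K_{\PP(E^*)}\otimes F\otimes\calI(h_F)\bigr)=0
\]
for all $q\ge (n+r-1)-{\rm nd}(F,h_F)+1 = n+r-{\rm nd}(F,h_F)$. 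Combining with the isomorphism above gives the claimed vanishing range on $X$.

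There is no serious obstacle: Theorem \ref{Isom thm} (already proved in Section 3) does all the heavy lifting, and the remaining verifications are bookkeeping, namely the identification of $\pi^*K_X\otimes L_E^m$ and the K\"ahlerity of $\PP(E^*)$. The one point to be careful about is that Cao's theorem is stated for compact K\"ahler manifolds, which is why the hypothesis that $X$ is K\"ahler is needed; in the next corollary the authors remove this by passing to a K\"ahler modification and invoking Corollary \ref{cor inj} to transfer cohomology, but for the present statement compactness and K\"ahlerity of $\PP(E^*)$ suffice directly.
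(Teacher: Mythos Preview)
Your proposal is correct and follows exactly the approach indicated in the paper, which states just before the corollary that the result ``can be derived from Theorem \ref{Isom thm} and Theorem \ref{K-V-N-C}'' without spelling out the details. One tiny notational slip: the K\"ahler form on $\PP(E^*)$ should be written as $C\pi^*\omega + \cur_{(L_E,h_L')}$ (with $\pi^*\omega$ rather than $\omega$, and the large constant in front of the base form), but the underlying fact that a projective bundle over a compact K\"ahler manifold is compact K\"ahler is standard and your argument is otherwise complete.
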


	In fact, $(E, h)\geq_\SNak 0$ implies that
 \begin{equation*}
      {\rm nd}(L_E^{m+r}\otimes\pi^*\det E^*, h_L^{m+r}\otimes\pi^*\det h^*)\ge r-1.
      \end{equation*}

	Let us go into details.
	 Take an open set $U\subset X$ such that $E|_U$ is trivial. Through a direct computation from (\ref{induced metric h_L}) on $U\times\PP^{r-1}\subset\PP(E^*)$,
 we obtain
	\begin{equation*}
		\cur_{(L_E,h_L)}\geq  b\omega_\FS,
	\end{equation*}
	where $\omega_\FS$ is regarded as the pull-back of the Fubini-Study metric on $\PP^{r-1}$ and  $b(z)$ is a non-negative continuous function satisfying that $b>0$ outside a pluripolar set in $U$.
	Then there exists a positive constant  $c>0$ and a  subset $B\subseteq U$ of positive measure satisfying
	\begin{equation*}
		\cur_{(L_E,h_L)}\geq c\omega_\FS \text{ on }B\times \PP^{r-1}.
	\end{equation*}
	By Demailly's approximation theorem, there exists a quasi-equisingular approximation  $\{h_\varepsilon\}$  on $L_E$ over $\PP(E^*)$ such that \begin{equation*}
	    \cur_{(L_E,h_\varepsilon)}
		  \ge (b-\frac{1}{2}\eps)   \omega_{\rm FS}.
    \end{equation*}
	In particular, on $B\times \PP^{r-1}$  we have
\begin{equation*}
    	\cur_{(L_E,h_\varepsilon)}\geq \frac{c}{2}\omega_\FS,
\end{equation*}
for sufficiently small $\eps$.
    As $X$ is compact K\"ahler and so is $\PP(E^*)$,
    by the definition of numerical dimension, we derive that
    \begin{enumerate}[(i)]
        \item ${\rm nd}(L_E,h_L)\ge r-1$ if $(E,h)\ge_\Grif 0$,
        \item ${\rm nd}(L_E^{m+r}\otimes\pi^*\det E^*, h_L^{m+r}\otimes\pi^*\det h^*)\ge r-1,$ if $(E,h)\ge_\SNak 0$.
    \end{enumerate}

Moreover, if $h$ is  Griffiths positive, then we can get
	\begin{equation*}
		\cur_{(L_E,h_L)}\geq  a\pi^*\omega+b\omega_\FS,
	\end{equation*}
	where	$a(z)$ is  a positive continuous function
	 in $U$.
	Consequently, there exists a positive constant $c>0$ and a subset $B\subseteq U$ of positive measure satisfying
	\begin{equation*}
		\cur_{(L_E,h_L)}\geq c(\pi^*\omega+\omega_\FS) \text{ on }B\times \PP^{r-1}.
	\end{equation*}
	Subsequently, a similar argument implies that
\begin{equation*}
	{\rm nd}(L_E,h_L)=n+r-1.
\end{equation*}
Furthermore, if $(E,h)>_\SNak 0$, then
\begin{equation*}
 {\rm nd}(L_E^{m+r}\otimes\pi^*\det E^*, h_L^{m+r}\otimes\pi^*\det h^*)=n+r-1.
 \end{equation*}

\begin{corollary}\label{coro vanisging for SNak>0}
 Let  $(E, h)>_\SNak 0$  be a holomorphic vector bundle of rank $r$ over a compact $($possibly non-K\"ahler$)$ manifold $X$ of dimension $n$, then for any $m\ge1$, we have
	\begin{equation*}
		H^q(X,K_X\otimes S^m\calE(S^mh))=0
	\end{equation*}
	for  $ q\ge  1$.
\end{corollary}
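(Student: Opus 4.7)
The plan is to transfer the cohomology on $X$ to cohomology of a line bundle on $\PP(E^*)$ via the Main Theorem, pass to a K\"ahler modification, and invoke Cao's singular Kawamata--Viehweg--Nadel vanishing on that modification.

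First, I would apply Theorem \ref{Isom thm}-({\rm i}) with $W=K_X$, together with the relative canonical formula $K_{\PP(E^*)/X}=L_E^{-r}\otimes\pi^*\det E$, to obtain
\[
H^q(X,K_X\otimes S^m\calE(S^mh))\simeq H^q(\PP(E^*),K_{\PP(E^*)}\otimes F\otimes \calI(h_F)),
\]
where $F:=L_E^{m+r}\otimes\pi^*\det E^*$ and $h_F:=h_L^{m+r}\otimes\pi^*\det h^*$. The hypothesis $(E,h)>_\SNak 0$ means that locally on $X$ there is a smooth strictly plurisubharmonic $\psi$ with $(E,he^\psi)\ge_\SNak 0$, so $(F,h_F)$ is Griffiths semipositive after twisting by $\pi^*\psi$; and as already observed in the paragraphs preceding the corollary, this gives $\ndim(F,h_F)=n+r-1=\dim\PP(E^*)$.

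Second, I would use this equality to see that $\PP(E^*)$ is Moishezon. Indeed, since $(F,h_F)$ is Griffiths semipositive with full numerical dimension, after Demailly's approximation by metrics with analytic singularities and Bonavero's singular Morse inequality (exactly as in the proof of Lemma \ref{lem kahler mod}) the line bundle $F$ has positive volume and is therefore big. Hence $\PP(E^*)$ admits a K\"ahler modification $\mu:\widetilde Y\to\PP(E^*)$ by Hironaka.

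Third, Lemma \ref{lem isom under mod} transports the cohomology through $\mu$:
\[
H^q(\PP(E^*),K_{\PP(E^*)}\otimes F\otimes \calI(h_F))\simeq H^q(\widetilde Y,K_{\widetilde Y}\otimes \mu^*F\otimes \calI(\mu^*h_F)).
\]
Because $\mu$ is bimeromorphic, a quasi-equisingular approximation of $h_F$ pulls back to one of $\mu^*h_F$, and the defining integrals in (\ref{Cao's definition}) are preserved, so $\ndim(\mu^*F,\mu^*h_F)=\ndim(F,h_F)=n+r-1=\dim\widetilde Y$. Applying Cao's vanishing (Theorem \ref{K-V-N-C}) on the compact K\"ahler manifold $\widetilde Y$ yields the vanishing of the right-hand side for every $q\ge1$, which combined with the two isomorphisms above gives the corollary.

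The main technical step is the reduction to the K\"ahler setting: it hinges on upgrading $\ndim(F,h_F)=\dim\PP(E^*)$ to bigness of $F$, and on checking birational invariance both of the multiplier-ideal-twisted cohomology (Lemma \ref{lem isom under mod}) and of Cao's numerical dimension under the pullback $\mu^*$. Once these are in place, the statement follows directly from Theorem \ref{K-V-N-C}.
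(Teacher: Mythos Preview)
Your proposal is correct and follows essentially the same route as the paper: apply Theorem \ref{Isom thm} with $W=K_X$ to pass to $(F,h_F)$ on $\PP(E^*)$, use the maximality of the numerical dimension to obtain a K\"ahler modification (the paper packages this step as Lemma \ref{lem kahler mod}), transport the cohomology via Lemma \ref{lem isom under mod}, and finish with Theorem \ref{K-V-N-C}. The only cosmetic difference is that the paper phrases the final step as ``the top-power integral of $\mu^*h_F$ is positive'' rather than ``$\ndim(\mu^*F,\mu^*h_F)=n+r-1$'', but these are equivalent.
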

\begin{proof}
  As  $(E, h)>_\SNak 0$, we have
   \begin{equation*}
\int_{\PP(E^*)}\left(\cur_{(L_E^{m+r}\otimes\pi^*\det E^*, h_L^{m+r}\otimes\pi^*\det h^*)}\right)^{n+r-1}>0.
\end{equation*}
   Thus,  Lemma \ref{lem kahler mod} implies that $\PP(E^*)$ admits a K\"ahler modification $\mu:\tilde{P}\to\PP(E^*).$
   Then applying Theorem \ref{Isom thm} and Lemma \ref{lem isom under mod}, we obtain  that
   \begin{align*}
      & H^q(X,K_X\otimes S^m\calE(S^mh)) \\
     \simeq &  H^q({\PP(E^*)},K_{\PP(E^*)}\otimes L_E^{m+r}\otimes\pi^*\det E^*\otimes\calI(h_L^{m+r}\otimes\pi^*\det h^*)) \\
      \simeq &  H^q(\tilde{P},K_{\tilde{P}}\otimes \mu^*(L_E^{m+r}\otimes\pi^*\det E^*)\otimes\calI(\mu^*(h_L^{m+r}\otimes\pi^*\det h^*))).
   \end{align*}
   Observing that
   \begin{equation*}
   \int_{\tilde{P}}\left(\cur_{(\mu^*(L_E^{m+r}\otimes\pi^*\det E^*), \mu^*(h_L^{m+r}\otimes\pi^*\det h^*))}\right)^{n+r-1}>0,\end{equation*}
    we obtain by Theorem \ref{K-V-N-C} that
   \begin{equation*}
		H^q(X,K_X\otimes S^m\calE(S^mh))=0,
	\end{equation*}
for any $ q\ge  1$.
\end{proof}
Similarly, one can show that
\begin{corollary}[Griffiths-type vanishing]
 Let  $(E, h)>_\Grif 0$  be a holomorphic vector bundle of rank $r$ over a compact $($possibly non-K\"ahler$)$ manifold $X$ of dimension $n$, then for any $m\ge1$, we have
	\begin{equation*}
		H^q(X,K_X\otimes S^m\calE\otimes\det\calE(S^mh\otimes\det h))=0
	\end{equation*}
	for  $ q\ge  1$.
\end{corollary}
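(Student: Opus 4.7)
The strategy is to parallel the proof of Corollary \ref{coro vanisging for SNak>0}, but using Theorem \ref{Isom thm'}-(i) (the Griffiths-positive variant) in place of Theorem \ref{Isom thm}-(i). The plan is first to convert the problem into a vanishing question about cohomology on $\PP(E^*)$ twisted with a multiplier ideal sheaf, then to invoke Cao's Kawamata--Viehweg--Nadel vanishing on a K\"ahler modification.

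Concretely, I would proceed as follows. First, invoke Theorem \ref{Isom thm'}-(i) with the given $(E,h)\ge_\Grif 0$ to obtain, for every $m\ge 1$ and $q\ge 0$, the isomorphism
\[
H^q(X, K_X\otimes S^m\calE\otimes \det\calE(S^mh\otimes\det h))
\simeq H^q(\PP(E^*), K_{\PP(E^*)}\otimes L_E^{m+r}\otimes \calI(h_L^{m+r})).
\]
Thus it suffices to prove that the right-hand side vanishes for all $q\ge 1$.

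Next, I would use the strict Griffiths positivity to upgrade the numerical dimension. Exactly as in the discussion immediately preceding Corollary \ref{coro vanisging for SNak>0}, the condition $(E,h)>_\Grif 0$ gives locally $\cur_{(L_E,h_L)}\ge a\pi^*\omega+b\,\omega_{\mathrm{FS}}$ on $U\times\PP^{r-1}$, with $a$ strictly positive and $b$ positive off a pluripolar set; passing through Demailly's quasi-equisingular approximation then yields $\mathrm{nd}(L_E^{m+r},h_L^{m+r})=n+r-1$, and in particular
\[
\int_{\PP(E^*)} \left(\cur_{(L_E^{m+r},h_L^{m+r})}\right)^{n+r-1}>0,
\]
in the sense of (\ref{Cao's definition}). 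By Lemma \ref{lem kahler mod}, this implies that $\PP(E^*)$ admits a K\"ahler modification $\mu\colon \tilde P\to \PP(E^*)$.

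Finally, Lemma \ref{lem isom under mod} (the modification-invariance of cohomology of canonically twisted multiplier ideal sheaves) transports the problem to $\tilde P$:
\[
H^q(\PP(E^*), K_{\PP(E^*)}\otimes L_E^{m+r}\otimes \calI(h_L^{m+r}))
\simeq H^q(\tilde P, K_{\tilde P}\otimes \mu^*L_E^{m+r}\otimes \calI(\mu^*h_L^{m+r})).
\]
Pullback preserves the strict positivity of the integral, so $\mathrm{nd}(\mu^*L_E^{m+r},\mu^*h_L^{m+r})=n+r-1=\dim \tilde P$, and Theorem \ref{K-V-N-C} applied on the compact K\"ahler manifold $\tilde P$ yields the vanishing for all $q\ge 1$. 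The main technical point, and the one most worth double-checking, is the numerical-dimension computation: one must verify carefully that strict Griffiths positivity of $h$ does indeed force $\mathrm{nd}=n+r-1$ for the twisted line bundle $L_E^{m+r}$ on $\PP(E^*)$ (rather than for $L_E^{m+r}\otimes \pi^*\det E^*$ as in the strongly Nakano case). However, the computation is essentially the one already sketched before Corollary \ref{coro vanisging for SNak>0} for $\mathrm{nd}(L_E,h_L)=n+r-1$ under the hypothesis $(E,h)>_\Grif 0$, and it carries over verbatim to powers of $(L_E,h_L)$, so no new ingredient is required.
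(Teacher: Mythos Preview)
Your proposal is correct and follows essentially the same approach the paper intends: the paper's proof is simply ``Similarly, one can show that'', referring back to the proof of Corollary \ref{coro vanisging for SNak>0} with Theorem \ref{Isom thm'}-(i) replacing Theorem \ref{Isom thm}-(i) and $(L_E^{m+r},h_L^{m+r})$ replacing $(L_E^{m+r}\otimes\pi^*\det E^*,h_L^{m+r}\otimes\pi^*\det h^*)$. Your identification of the key input---that $(E,h)>_\Grif 0$ gives $\mathrm{nd}(L_E,h_L)=n+r-1$, already verified in the paragraph before Corollary \ref{coro vanisging for SNak>0}---is exactly right.
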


The subsequent result serves as a generalization of Meng-Zhou's vanishing theorem for vector bundles with singular Hermitian metrics.
\begin{corollary}
	\label{Meng-Zhou type vanishing}
	Let $X$ be a holomorphically convex K\"ahler manifold, and let $(E,h)\geq_\Grif0$ be a holomorphic vector bundle on $X$.
	Suppose $h$ is smooth outside an analytic subset $Z$ and satisfies
	\begin{equation}
	\label{pair nonzero}
		\left(\langle \sqrt{-1}\Theta(h^*)a,a\rangle(x)\right)^k\neq0.
	\end{equation}
	for some point $x\in X\backslash Z$ and $a\in E^*_x$.
	Then we have
	\begin{equation*}
		H^q(X,K_X\otimes S^m\calE\otimes\det \calE(S^mh\otimes\det h))=0.
	\end{equation*}
    for any $q\geq n-k+1$.
\end{corollary}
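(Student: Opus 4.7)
The plan is to apply Theorem \ref{Isom thm'}-(i) to transport the problem to a vanishing statement on the dual projective bundle $\PP(E^*)$, and then to invoke Meng-Zhou's Theorem \ref{MZ} for the line bundle $(L_E^{m+r}, h_L^{m+r})$.

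By Theorem \ref{Isom thm'}-(i), there is an isomorphism
\[
H^q(X, K_X\otimes S^m\calE\otimes\det\calE(S^mh\otimes\det h))\simeq H^q(\PP(E^*), K_{\PP(E^*)}\otimes L_E^{m+r}\otimes\calI(h_L^{m+r})),
\]
reducing the task to the vanishing of the right-hand side for $q\ge n-k+1$. The hypotheses of Theorem \ref{MZ} are easy to check: since $\pi:\PP(E^*)\to X$ is proper and $X$ is holomorphically convex, so is $\PP(E^*)$; and as a projective bundle over a K\"ahler manifold, $\PP(E^*)$ is itself K\"ahler. Proposition \ref{pro posi} gives $(L_E^{m+r}, h_L^{m+r})\ge_\Grif 0$, and $h_L^{m+r}$ is smooth off the analytic set $\pi^{-1}(Z)$.

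The core step is to exhibit a point of $\PP(E^*)\setminus\pi^{-1}(Z)$ at which the $(r+k-1)$-th wedge power of the Chern curvature of $(L_E^{m+r}, h_L^{m+r})$ is nonzero. By hypothesis, choose $x\in X\setminus Z$ and $a\in E^*_x$ of unit norm with $\bigl(\langle\sqrt{-1}\Theta(h^*)a, a\rangle(x)\bigr)^k\ne 0$. At the point $(x, [a])\in\PP(E^*)$, formula \eqref{curvature of LE normal} yields
\[
\sqrt{-1}\Theta_{(L_E, h_L)}(x,[a]) = \omega_h+\omega_v,
\]
where $\omega_h=-\langle\sqrt{-1}\Theta(h^*)a, a\rangle$ is a $(1,1)$-form on the horizontal subspace $T_xX\hookrightarrow T_{(x,[a])}\PP(E^*)$ and $\omega_v=\sqrt{-1}\sum_\lambda dw^\lambda\wedge d\bar{w}^\lambda$ is the Fubini-Study-type form on the vertical fiber direction $T_{[a]}\PP(E^*_x)$. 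Expanding $(\omega_h+\omega_v)^{r+k-1}$ binomially produces the term $\binom{r+k-1}{k}\,\omega_h^k\wedge\omega_v^{r-1}$; here $\omega_v^{r-1}$ is a nonzero volume form on the fiber $\PP^{r-1}$, while $\omega_h^k=(-1)^k\bigl(\langle\sqrt{-1}\Theta(h^*)a, a\rangle\bigr)^k\ne 0$ by choice of $(x, a)$. Terms of distinct horizontal/vertical bi-degree cannot cancel, so the wedge power is nonzero; scaling by $(m+r)^{r+k-1}$ preserves nonvanishing.

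Applying Theorem \ref{MZ} with $\ell=r+k-1$ on the $(n+r-1)$-dimensional manifold $\PP(E^*)$ then yields
\[
H^q(\PP(E^*), K_{\PP(E^*)}\otimes L_E^{m+r}\otimes\calI(h_L^{m+r}))=0 \quad \text{for } q\ge (n+r-1)-(r+k-1)+1=n-k+1,
\]
and the isomorphism from the first paragraph translates this to the desired statement. I expect the main obstacle to be the curvature nondegeneracy computation; once the clean horizontal/vertical splitting is observed, the rest is mostly bookkeeping between Theorem \ref{Isom thm'} and Theorem \ref{MZ}.
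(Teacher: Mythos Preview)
Your proposal is correct and follows essentially the same route as the paper's proof: transport via Theorem \ref{Isom thm'} to $\PP(E^*)$, use the curvature formula \eqref{curvature of LE normal} to split $\sqrt{-1}\Theta_{(L_E,h_L)}$ into horizontal and vertical pieces, verify that the $(k+r-1)$-th power is nonzero at $(x,[a])$, and conclude via Theorem \ref{MZ}. Your explicit binomial expansion and the remark that terms of different horizontal/vertical bidegree cannot cancel make the nondegeneracy step more transparent than the paper's terse statement, and your verification that $\PP(E^*)$ is holomorphically convex K\"ahler fills in details the paper leaves implicit, but the argument is the same.
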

\begin{proof}
	Let $\{e_\alpha\}$ be a normal frame of $E$ at $x\in X\backslash Z$.
The curvature of $h$ at $x$ is written as
\begin{equation*}
	\Theta_{(E,h)}=c_{ij\alpha\beta}dz^i\wedge d\bar z^j\otimes e^*_\alpha\otimes e_\beta ,
\end{equation*}
satisfying $\bar c_{ij\alpha\beta}=c_{ji\beta\alpha}$. Additionally, the curvature of $h^*$ at $x$ is written as
\begin{equation*}
	\Theta_{(E,h^*)}=-c_{ij\beta\alpha}dz^i\wedge d\bar z^j\otimes e_\alpha\otimes e^*_\beta.
\end{equation*}

Write $a\in E^*_x\setminus\{0\}$ as $\sum a_\alpha e^*_\alpha$, then the curvature of $h_L$ at $(x,[a])$ is given by
\begin{flalign}
	\Theta_{(L_E,h_L)}(x,[a])
	=&\sum c_{ij\beta\alpha }\frac{a_\alpha\bar{a}_\beta}{|a|^2} dz^i\wedge d\bar z^j +\sum dw^\lambda\wedge d\bar w^\lambda \nonumber \\
 =&-\frac{\langle \Theta(h^*)a,a\rangle}{|a|^2}
	+\sum dw^\lambda\wedge d\bar w^\lambda.
	\label{curvature of LE normal1}
\end{flalign}

Using (\ref{pair nonzero}) and (\ref{curvature of LE normal1}), we obtain that
\begin{equation*}
	\left(\sqrt{-1}\Theta_{(L_E,h_L)}(x,[a])\right)^{k+r-1}\neq0.
\end{equation*}
Therefore, we conclude by Theorem \ref{Isom thm} and Theorem \ref{MZ} that
\begin{align*}
		&H^q(X,K_X\otimes \calE\otimes\det \calE(h\otimes\det h)) \\
		=& H^q(\PP(E^*),K_{\PP(E^*)}\otimes L_E^{r+1}\otimes\calI(h_L^{r+1}))
		=0.
	\end{align*}
    for any $q\geq n+r-1-(k+r-1)+1=n-k+1$.

For $m\geq2$, the proof follows a similar argument.
\end{proof}

\subsection{Singular holomorphic Morse inequalities for vector bundles}
\label{Singular holomorphic Morse inequalities for vector bundles}

Firstly, let us recall Bonavero's singular holomorphic Morse inequalities.
\begin{theorem}[\cite{Bon98}]
\label{Bonavero's singular H-M-Ineq}
	Let $Y$ be a compact complex manifold of dimension $n$ and let $L$ be a holomorphic line bundle on $Y$ equipped with a Hermitian metric $h_L$ with analytic singularities.
	Take $V$ as a holomorphic vector bundle on $Y$.
	Then for $0\leq q\leq n$, we have
	\begin{align}\label{weak sing Morse ineq}
		h^q(Y,V\otimes L^m(h_L^m)) \leq\rk(V)\dfrac{m^n}{n!}\int_{Y(q)}(-1)^q(\cur_{(L,h_L)})^n +o(m^n),
	\end{align}
	and
	\begin{align}\label{strong sing Morse ineq}
		\sum_{j=0}^q(-1)^{q-j} h^j(Y,V\otimes L^m(h_L^m)) \leq\rk(V)\dfrac{m^n}{n!}\int_{Y(\leq q)}(-1)^q(\cur_{(L,h_L)})^n +o(m^n).
	\end{align}
	where
	\begin{align*}
		Y(q):=\{y\in Y| &\cur_{(L,h_L)} \text{ has }q \text{ negative eigenvalues and } n-q \text{ positive eigenvalues} \},
	\end{align*}
	and
	\begin{equation*}
		Y(\leq q):=\bigcup_{j=0}^qY(q).
	\end{equation*}
\end{theorem}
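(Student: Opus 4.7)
The plan is to reduce Bonavero's singular Morse inequalities to Demailly's classical smooth holomorphic Morse inequalities by passing to a log-resolution of the analytic singularities of $h_L$. Since $h_L$ has analytic singularities, its local weight is of the form $\varphi = \frac{c}{2}\log\sum_j |g_j|^2 + O(1)$ for finitely many holomorphic functions $g_j$ and some $c>0$. By Hironaka's desingularization theorem there exists a composition of blow-ups $\mu:\tilde Y \to Y$ with smooth centres such that $\mu^{-1}\mathcal J$ is principal with simple normal crossing support, where $\mathcal J$ is the ideal generated locally by the $g_j$. On $\tilde Y$ this yields a decomposition $\mu^*L = F + \sum a_i E_i$ (as $\mathbb{Q}$-line bundles, or after replacing $m$ by a multiple), where the singular weight pulled back becomes $\mu^*\varphi = \log\prod|s_{E_i}|^{2a_i} + \psi$ with $\psi$ smooth, so that $F$ carries a smooth Hermitian metric $h_F$ whose curvature equals $\mu^*\cur_{(L,h_L)}$ away from the exceptional locus.

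The next step is to pass the cohomology upstairs: using the standard identity $\mu_*\calO_{\tilde Y}(-\lfloor \sum m a_i E_i\rfloor) = \calI(h_L^m)$ for $h_L$ with SNC analytic singularities (a consequence of the explicit formula for multiplier ideals under log-resolution), combined with local vanishing $R^k\mu_*(\mu^*V\otimes F^m\otimes\calO_{\tilde Y}(-\lfloor \sum ma_i E_i\rfloor))=0$ for $k>0$ and $m\gg 0$ (via Grauert-Riemenschneider or Kawamata-Viehweg on the exceptional fibres, after twisting by $K_{\tilde Y/Y}$; one absorbs $K_{\tilde Y/Y}$ into the round-down since $F$ may be written additively with enough flexibility), Leray's spectral sequence gives
\begin{equation*}
H^q(Y, V\otimes L^m\otimes\calI(h_L^m)) \simeq H^q(\tilde Y, \mu^*V\otimes F^m).
\end{equation*}
Thus the singular cohomology on $Y$ is identified with a smooth one on $\tilde Y$.

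Now Demailly's classical holomorphic Morse inequalities apply to the smooth Hermitian line bundle $(F,h_F)$ on the compact manifold $\tilde Y$, yielding
\begin{equation*}
h^q(\tilde Y,\mu^*V\otimes F^m) \leq \rk(V)\,\frac{m^n}{n!}\int_{\tilde Y(q)} (-1)^q (\cur_{(F,h_F)})^n + o(m^n),
\end{equation*}
together with the corresponding strong (alternating-sum) version. The final step is to match the right-hand integrals: since $\cur_{(F,h_F)}=\mu^*\cur_{(L,h_L)}$ on the complement of the exceptional divisor $E:=\bigcup E_i$, and since $\mu|_{\tilde Y\setminus E}:\tilde Y\setminus E\to Y\setminus\mu(E)$ is an isomorphism while $\mu(E)$ is a proper analytic subset of $Y$ (which is contained in the polar set of $\varphi$ and thus contributes zero Lebesgue measure to the index-level sets $Y(q)$), we obtain
\begin{equation*}
\int_{\tilde Y(q)}(-1)^q(\cur_{(F,h_F)})^n = \int_{Y(q)}(-1)^q(\cur_{(L,h_L)})^n,
\end{equation*}
where the right-hand side uses the absolutely continuous part of the current (which, for analytic singularities, is well-defined on the regular locus and is precisely what the Bonavero statement integrates).

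The main obstacle will be Step 2: establishing the cohomology identification cleanly. One must (a) carry the right round-down so that $\mu_*$ recovers $\calI(h_L^m)$ and not a larger or smaller ideal — this forces a careful bookkeeping of the discrepancy divisor $K_{\tilde Y/Y}$ which must be absorbed into the additive decomposition of $\mu^*L$ — and (b) justify the vanishing of $R^k\mu_*$ at the asymptotic level, which is where the hypothesis of \emph{analytic} (as opposed to more general) singularities is essential. The Morse integrals comparison in Step 4 is then routine once the resolution is in place, since the exceptional locus has measure zero and the pulled-back curvature agrees with the original away from it.
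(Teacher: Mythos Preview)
The paper does not supply a proof of this statement: Theorem~\ref{Bonavero's singular H-M-Ineq} is quoted from Bonavero~\cite{Bon98} as a known result and is used as a black box in the proof of Theorem~\ref{Singular Morse for v.b.}. There is therefore no ``paper's own proof'' to compare your proposal against.

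That said, your outline is essentially Bonavero's original argument: pass to a log-resolution of the analytic singularities so that the pulled-back weight decomposes as a divisorial part plus a smooth remainder, transfer the cohomology upstairs via Leray together with the identification of multiplier ideals under the resolution, apply Demailly's smooth Morse inequalities on~$\tilde Y$, and push the curvature integrals back down using that the exceptional locus has measure zero. The one place where your sketch is loose is Step~2. The clean statement is not an exact isomorphism $H^q(Y,V\otimes L^m\otimes\calI(h_L^m))\simeq H^q(\tilde Y,\mu^*V\otimes F^m)$ for all $m$, but rather that after writing $\mu^*\varphi=\sum a_i\log|s_{E_i}|^2+\psi$ with $\psi$ smooth, one has $\calI(h_L^m)=\mu_*\calO_{\tilde Y}(K_{\tilde Y/Y}-\lfloor mD\rfloor)$ with $D=\sum a_iE_i$, and the higher direct images $R^k\mu_*$ of the relevant sheaf vanish by local vanishing (Kawamata--Viehweg type). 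The discrepancy divisor $K_{\tilde Y/Y}$ and the round-downs $\lfloor mD\rfloor$ contribute only bounded correction terms as $m\to\infty$, so they are absorbed into the $o(m^n)$; you acknowledge this difficulty but should not present the cohomology identification as an equality of groups for each fixed $m$. With that caveat, the strategy is correct and is exactly the one in~\cite{Bon98} (see also~\cite[Theorem~2.2.30]{MM07}).
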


According to Theorem \ref{thm twist iso}, it is immediate to obtain a singular holomorphic Morse inequalities for vector bundle, as follows,
\begin{theorem}
\label{Singular Morse for v.b.}
	Let $X$ be a compact complex manifold of dimension $n$ and $(E,h)$ be a holomorphic vector bundle endowed with a singular Hermitian metric. Assume that there is a smooth function $\phi$ such that $(E,he^{-\phi})\ge_\Grif 0$ and the induced metric $h_L$ on $L_E$ over $\PP(E^*)$ has analytic singularities.
	Take $V$ as a holomorphic vector bundle on $X$.
	Then for $0\leq q\leq n$, we have
	\begin{align*}
		&h^q(X, V\otimes S^m\calE\otimes\det \calE(S^mh\otimes\det h))  \\
	\leq&\rk(V)\frac{m^{n+r-1}}{(n+r-1)!} \int_{\PP(E^*)(q)} (-1)^q(\cur_{(L_E,h_L)})^{n+r-1} +o(m^{n+r-1}),
	\end{align*}
	and
	\begin{align}
	&\sum_{j=0}^q(-1)^{q-j} h^j(X, V\otimes S^m\calE\otimes\det \calE(S^mh\otimes\det h)) \nonumber \\
		\leq&\rk(V)\frac{m^{n+r-1}}{(n+r-1)!} \int_{\PP(E^*)(\leq q)} (-1)^q(\cur_{(L_E,h_L)})^{n+r-1} +o(m^{n+r-1}).
	\label{Strong singular Hol Morse ineq. of V.B.}
\end{align}
	where similarly
	\begin{align*}
	\PP(E^*)(q):=\{y\in \PP(E^*)| &\text{ the curvature }\cur_{(L_E,h_L)} \text{ has }q \text{ negative eigenvalues } \\
	&\ \ \text{and } n+r-1-q \text{ positive eigenvalues} \},
	\end{align*}
	and
	\begin{equation*}
	\PP(E^*)(\leq q):=\bigcup_{j=0}^q\PP(E^*)(q).
	\end{equation*}
\end{theorem}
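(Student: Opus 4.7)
The plan is to transfer the cohomology on $X$ to $\PP(E^*)$ via the isomorphism of Theorem \ref{thm twist iso}, and then apply Bonavero's singular Morse inequalities (Theorem \ref{Bonavero's singular H-M-Ineq}) to $(L_E, h_L)$ on $\PP(E^*)$. First I would reduce to the Griffiths semi-positive case: since $\phi$ is smooth, replacing $h$ by $he^{-\phi}$ changes neither the multiplier submodule sheaf $S^m\calE\otimes\det\calE(S^mh\otimes\det h)$ nor the multiplier ideal sheaf $\calI(h_L^{m+r})$ (because $(h_L')^{m+r}$ differs from $h_L^{m+r}$ by the smooth factor $e^{-(m+r)\phi\circ\pi}$), so we may assume $(E,h)\ge_\Grif 0$. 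Under this assumption, on the compact manifold $X$ the curvature hypotheses of Theorem \ref{thm twist iso} are trivially met for $W = V$, $M = \det E$, $h_M = \det h$: the two $\pi^*\cur_{(\det E,\det h)}$ contributions cancel, and both $\cur_{(\det E,\det h)}$ and $\cur_{(L_E, h_L)}$ are non-negative. This yields
\begin{equation*}
H^q(X, V\otimes S^m\calE\otimes\det\calE(S^mh\otimes\det h)) \simeq H^q(\PP(E^*), \pi^*V\otimes\pi^*\det E\otimes L_E^m\otimes\calI(h_L^{m+r})).
\end{equation*}

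Next, to line up with Bonavero's inequality, in which the exponents of the line bundle and its metric must coincide, I would absorb an $L_E^{-r}$ twist into the coefficient bundle. Setting $V'' := \pi^*V\otimes\pi^*\det E\otimes L_E^{-r}$, which has the same rank as $V$, one obtains
\begin{equation*}
\pi^*V\otimes\pi^*\det E\otimes L_E^m\otimes\calI(h_L^{m+r}) = V''\otimes L_E^{m+r}\otimes\calI(h_L^{m+r}).
\end{equation*}
Applying Theorem \ref{Bonavero's singular H-M-Ineq} on the compact complex manifold $\PP(E^*)$ of dimension $n+r-1$, with line bundle $L_E$, metric $h_L$ (of analytic singularities by hypothesis), vector bundle $V''$ and exponent $m+r$ in place of $m$, gives the weak and strong Morse inequalities with leading coefficient $\rk(V)\frac{(m+r)^{n+r-1}}{(n+r-1)!}$ multiplying the claimed integrals over $\PP(E^*)(q)$ and $\PP(E^*)(\le q)$, with remainder $o((m+r)^{n+r-1})$.

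Finally, since $(m+r)^{n+r-1} = m^{n+r-1} + O(m^{n+r-2})$ and the integrals are finite, the difference between the coefficients $(m+r)^{n+r-1}$ and $m^{n+r-1}$ is absorbed into $o(m^{n+r-1})$, yielding exactly the stated form. I do not anticipate a deep obstacle: the argument is essentially a bookkeeping assembly of Theorem \ref{thm twist iso} and Bonavero's theorem. The steps that need the most care are verifying that the multiplier sheaves and the analytic-singularity property of $h_L$ are both insensitive to the smooth perturbation by $\phi$, and that the $L_E^{-r}$ twist is absorbed cleanly into $V''$ without changing its rank so that the prefactor $\rk(V)$ appears correctly in the final inequalities.
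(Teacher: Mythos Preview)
Your proposal is correct and follows the same route as the paper: invoke Theorem \ref{thm twist iso} (with $M=\det E$, $h_M=\det h$) to identify the cohomology on $X$ with $H^q(\PP(E^*),K_{\PP(E^*)/X}\otimes\pi^*V\otimes L_E^{m+r}\otimes\calI(h_L^{m+r}))$ (your $V''$ is exactly $K_{\PP(E^*)/X}\otimes\pi^*V$ via (\ref{formula K/X})), then apply Bonavero and absorb $(m+r)^{n+r-1}-m^{n+r-1}$ into the error. One point of care in your ``we may assume $(E,h)\ge_\Grif 0$'' step: the smooth perturbation $h\mapsto he^{-\phi}$ leaves the multiplier sheaves and the analytic-singularity property unchanged, so it legitimately justifies the isomorphism, but it \emph{does} alter $\cur_{(L_E,h_L)}$ and hence the index sets $\PP(E^*)(q)$; thus Bonavero must be applied with the original $h_L$ (not $h_L e^{-\phi\circ\pi}$) to land on the stated integrals.
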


\begin{proof}Since  $h_L$ with analytic singularities,
	it follows from Theorem \ref{thm twist iso} and Theorem \ref{Bonavero's singular H-M-Ineq} that  for any $0\leq q\leq n$
	\begin{align*}
	&h^q(X, V\otimes S^m\calE\otimes\det \calE(S^mh\otimes\det h))          \\
	=& h^q(\PP(E^*),K_{\PP(E^*)/X}\otimes\pi^*V\otimes L_E^{m+r}\otimes\calI(h_L^{m+r}))  \\
	\leq&\rk(V)\frac{(m+r)^{n+r-1}}{(n+r-1)!} \int_{\PP(E^*)(q)} (-1)^q(\cur_{(L_E,h_L)})^{n+r-1} +o(m^{n+r-1}),
	\end{align*}
	and
	\begin{align*}
	&\sum_{j=0}^q(-1)^{q-j} h^j(X, V\otimes S^m\calE\otimes\det \calE(S^mh\otimes\det h)) \nonumber \\
	=&\sum_{j=0}^q(-1)^{q-j} h^j(\PP(E^*),K_{\PP(E^*)/X}\otimes\pi^*V\otimes L_E^{m+r}\otimes\calI(h_L^{m+r})) \nonumber \\
	\leq&\rk(V)\frac{(m+r)^{n+r-1}}{(n+r-1)!} \int_{\PP(E^*)(\leq q)} (-1)^q(\cur_{(L_E,h_L)})^{n+r-1} +o(m^{n+r-1}).
	\end{align*}
\end{proof}

\begin{example}\label{eg 1}
    Let $E$ be a trivial holomorphic vector bundle of rank $r$ over a domain $U\subset\CC^n$. Let $\{F_k\}_{k\in\ZZ_+}\subset \mathcal{O}(U)$ be a sequence of holomorphic functions satisfying the following conditions:
    \begin{enumerate}[(i)]
    	\item
     For any $K\Subset U$, there exists $C_K>0$ and $N_K\in\ZZ_+$ such that  on $K$
    \begin{equation*} \sum_{k=1}^{+\infty} |F_{k}|^2\le C_K\sum_{k=1}^{N_K}|F_k|^2;\end{equation*}
    \item  $\dim \mathrm{Span}_{k\in\ZZ_+}\{F_k(z)\}=r$ outside a proper analytic subset $A\subset U$.
    \end{enumerate}

    For any fixed holomorphic frame $e=\{e_j\}^r_{j=1}$, one can define a singular metric $h^*=_{e^*} H$ on $E^*$ using the following matrix representation:
    \begin{equation*}
    H=\sum_{k} (F_{k,i}\overline{F}_{k,j})_{r\times r}
    =\begin{pmatrix}
    \sum_{k} F_{k,1}\overline{F}_{k,1}
    & \cdots
    &\sum_{k} F_{k,1}\overline{F}_{k,r}\\
    \vdots &\ddots & \vdots\\
    \sum_{k} F_{k,r}\overline{F}_{k,1}
    & \cdots
    &\sum_{k} F_{k,r}\overline{F}_{k,r}\\
    \end{pmatrix}.
    \end{equation*}
   Then, for  any local holomorphic section $G=\sum_{j=1}^rG_je^*_j$ of $E^*$,
   we obtain that
   \begin{equation*}
   |G|^2_{h^*}=\sum_{i,j}(\sum_{k} F_{k,i}\overline{F}_{k,j})G_i\overline{G}_j=\sum_{k}|\sum_i F_{k,i}G_i|^2
   \end{equation*}
     is a plurisubharmonic function.
    Therefore, $h^*$ is Griffiths semi-negative.
    Consequently, $(E,h)\geq_\Grif0$.
    Additionally, according to $(\ref{induced metric h_L})$, the induced metric $h_L$ on $L_E$ is represented by
    \begin{equation*}
    \varphi_h(z,w)=\log\sum_{k}|\sum_i F_{k,i}w_i|^2.
    \end{equation*}
    Therefore, condition (1) implies that $h_L$ has analytic singularities.
\end{example}

\begin{example}\label{eg 2}
      Let $E$ be a holomorphic vector bundle of rank $r$ over an n-dimensional compact K\"ahler manifold $X$.
      Assuming $\{F_k\}$ forms a basis of $H^0(X,E)$ such that outside a proper analytic subset $A\subset X$:
\begin{equation*}
\dim \mathrm{Span}_{k}\{F_k(z)\} = r.
\end{equation*}

    With a fixed holomorphic frame $e=\{e_j\}^r_{j=1}$, expressing $F_k$ as $F_k=\sum F_{k,j}e_j$, one can define a singular metric $h^*=_{e^*} H$ on $E^*$ represented by the following matrix:
    \begin{equation*}
    H=\sum_{k} (F_{k,i}\overline{F}_{k,j})_{r\times r}
    =\begin{pmatrix}
    \sum_{k} F_{k,1}\overline{F}_{k,1}
    & \cdots
    &\sum_{k} F_{k,1}\overline{F}_{k,r}\\
    \vdots &\ddots & \vdots\\
    \sum_{k} F_{k,r}\overline{F}_{k,1}
    & \cdots
    &\sum_{k} F_{k,r}\overline{F}_{k,r}\\
    \end{pmatrix}.
    \end{equation*}
   Therefore, $(E,h)\geq_\Grif0$, and the induced metric $h_L$ on $L_E$ has analytic singularities.
   \end{example}

In addition, Theorem \ref{Bonavero's singular H-M-Ineq} yields an explicit formula about the asymptotic growth of global sections of pseudo-effective line bundle, which is equipped with a singular metric with analytic singularities.

Let $(M,h_M)\geq_\Grif0$ be a holomorphic line bundle and $V$ be a holomorphic vector bundle on a compact K\"ahler manifold $Y$.
Assume that $h_M$ has analytic singularities. On the one hand,
taking $q=0$ in (\ref{weak sing Morse ineq}), we have
\begin{align}
h^0(Y,V\otimes M^k\otimes\calI(h_M^k))
\leq\rk(V) \frac{k^n}{n!}\int_Y(\cur_{(M,h_M)})^n+o(k^n).
\end{align}
On the other hand,
taking $q=1$ in (\ref{strong sing Morse ineq}), we obtain that
\begin{align*}
h^0(Y,V\otimes M^k\otimes\calI(h_M^k))
\geq\rk(V)\frac{k^{n}}{n!} \int_{Y} (\cur_{(M,h_M)})^{n} +o(k^{n}).
\end{align*}
Thus, we conclude that
\begin{equation}
\label{asymt formula for analytic sing}
\lim_{k\rightarrow\infty} \frac{1}{k^n}h^0(Y,V\otimes M^k\otimes\calI(h_M^k))
=\frac{\rk(V)}{n!}\int_Y(\cur_{(M,h_M)})^n.
\end{equation}

Fix a smooth metric $h_0$ on $M$ and denote $\theta:=\cur_{(M,h_0)}$. We denote by $\psh(Y,\theta)$ the space of quasi-plurisubharmonic function $\varphi$ on $Y$ such that $\theta+\ddbar \varphi\geq0$ in the sense of currents.
For any $\varphi\in\psh(Y,\theta)$, we have $(M,h_0e^{-\varphi})\geq_\Grif0$.
The so-called $\calI$-\textit{model envelope} is defined as
\begin{equation}
\label{calI-envelope}
P[\varphi]_\calI(z)
:=\sup\{\psi(z) ~|~ \psi \in\psh(Y,\theta), \psi\leq0,
\calI(k\psi)\subseteq\calI(k\varphi), \forall\ k\in\NN\}
\end{equation}
for any $\varphi\in\psh(Y,\theta)$.
It is observed that $P[\varphi]_\calI \in\psh(Y,\theta)$.
The $\calI$-model envelope was initially explored in \cite{KS20} and deeply studied in \cite{DX21}, \cite{DX22}.
A quasi-plurisubharmonic function $\varphi\in\psh(Y,\theta)$ is called $\calI$-\textit{model} if $P[\varphi]_\calI=\varphi$.

Very recently, the authors in \cite{DX21} extended the formula (\ref{asymt formula for analytic sing}) to the case of $\calI$-model quasi-plurisubharmonic function.

\begin{theorem}[\cite{DX21}]
	\label{DX asymptotic}
	Let $(M,h_0e^{-\varphi})\geq_\Grif0$ be a Hermitian line bundle
	and $T$ be a holomorphic vector bundle of rank $s$ on a compact K\"ahler manifold $Y$.
	Then
	\begin{equation}
	\lim_{k\rightarrow\infty}\frac{1}{k^n} h^0(Y,T\otimes M^k\otimes\calI(k\varphi))
	=\frac{s}{n!}\int_Y \left(\cur_{(M,h_0e^{-P[\varphi]_\calI})}\right)^{n},
	\end{equation}
	where $(\cur_{(M,h_0e^{-P[\varphi]_\calI})})^{n}$ is the Monge-Amp\`ere measure of $P[\varphi]_\calI$ in the sense of $($\textup{\ref{Cao's definition}}$)$.
\end{theorem}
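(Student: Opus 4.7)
The plan is to reduce, via the $\calI$-model envelope, to the case of potentials with analytic singularities, where the Bonavero-type asymptotic formula~(\ref{asymt formula for analytic sing}) applies directly. First I would observe that $\calI(k\varphi)=\calI(kP[\varphi]_\calI)$ for every $k\in\NN$. Indeed, $\varphi-\sup_Y\varphi$ is an admissible candidate in the sup defining $P[\varphi]_\calI$ in~(\ref{calI-envelope}), so $P[\varphi]_\calI\geq\varphi+O(1)$, which yields $\calI(k\varphi)\subseteq\calI(kP[\varphi]_\calI)$; conversely, a standard upper-envelope argument combined with the strong openness property of Guan--Zhou shows that $P[\varphi]_\calI$ is itself admissible in its own definition, giving $\calI(kP[\varphi]_\calI)\subseteq\calI(k\varphi)$. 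Thus the left-hand side of the claim is unaffected when $\varphi$ is replaced by $P[\varphi]_\calI$, and we may assume from now on that $\varphi=P[\varphi]_\calI$ is $\calI$-model.

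Second, I would invoke Demailly's analytic approximation theorem to produce a decreasing sequence $\{\varphi_j\}\subset\psh(Y,\theta+\eps_j\omega)$ with $\eps_j\downarrow 0$ (where $\omega$ is any K\"ahler form on $Y$), such that each $\varphi_j$ has analytic singularities and $\varphi_j\downarrow\varphi$. For each fixed $j$, applying the analytic-singularity asymptotic~(\ref{asymt formula for analytic sing}) to the pair $(M,h_0e^{-\varphi_j})$ gives
\begin{equation*}
    \lim_{k\to\infty}\frac{1}{k^n}h^0(Y,T\otimes M^k\otimes\calI(k\varphi_j))
    =\frac{s}{n!}\int_Y\left(\theta+\ddbar\varphi_j\right)^n.
\end{equation*}
Since $\varphi_j\geq\varphi$ yields the sheaf inclusion $\calI(k\varphi)\subseteq\calI(k\varphi_j)$, this immediately provides, for each $j$, the upper bound
\begin{equation*}
    \limsup_{k\to\infty}\frac{1}{k^n}h^0(Y,T\otimes M^k\otimes\calI(k\varphi))
    \leq\frac{s}{n!}\int_Y\left(\theta+\ddbar\varphi_j\right)^n.
\end{equation*}
For the matching lower bound I would exploit the $\calI$-model property of $\varphi$: the defining inclusion in~(\ref{calI-envelope}) ensures that $\calI(k\varphi_j)\subseteq\calI(k(1-\delta_j)\varphi)$ for some $\delta_j\downarrow 0$ once $j$ is large, which, combined with strong openness, transports the lower bound from the $\varphi_j$-level back to $\varphi$.

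Letting $j\to\infty$ then forces the desired equality, provided one can prove that $\int_Y(\theta+\ddbar\varphi_j)^n\to\int_Y(\theta+\ddbar\varphi)^n$. This continuity of the total non-pluripolar Monge--Amp\`ere mass along decreasing Demailly regularizations of an $\calI$-model potential is the hardest part of the argument and the key new ingredient beyond Bonavero's theorem. In general, non-pluripolar mass is only upper semicontinuous along decreasing sequences, and can strictly drop in the limit (``escape of mass'' for unbounded $\theta$-psh functions); the $\calI$-model hypothesis on $\varphi$ is precisely what eliminates this phenomenon. Establishing this mass continuity constitutes the technical core of~\cite{DX21}, and relies on delicate tools from pluripotential theory --- in particular the orthogonality relation for envelopes and the comparison between $P[\cdot]$- and $\calI$-model envelopes --- rather than on analytic geometry. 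Once that continuity is in hand, the sandwich between the two inequalities above closes the proof.
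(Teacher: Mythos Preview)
The paper does not prove this statement: Theorem~\ref{DX asymptotic} is quoted from \cite{DX21} and used as a black box (it is immediately followed by Remark~\ref{monotonicity of MA measure} and then applied to derive Theorem~\ref{Darvas-Xia type}). There is therefore no ``paper's own proof'' to compare your proposal against.

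That said, your sketch is a reasonable outline of the strategy in \cite{DX21}, and you correctly identify the crux: the continuity of the non-pluripolar Monge--Amp\`ere mass along a Demailly-type approximation of an $\calI$-model potential. One point to tighten: your claimed inclusion $\calI(k\varphi_j)\subseteq\calI(k(1-\delta_j)\varphi)$ is not what the quasi-equisingular property gives directly; the standard statement is rather $\calI(\varphi_j^{m(1+\delta)})\subseteq\calI(\varphi^m)$ for $j$ large depending on $m,\delta$, and turning this into a clean lower bound for $h^0$ at level $k$ requires a bit more care than you indicate. Also, Demailly's approximation produces $\varphi_j\in\psh(Y,\theta+\eps_j\omega)$, so formula~(\ref{asymt formula for analytic sing}) does not apply verbatim to $(M,h_0e^{-\varphi_j})$; one must either absorb the $\eps_j\omega$ into an auxiliary ample twist or argue that the resulting error is $o(1)$. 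These are fixable, but they are exactly the places where the argument in \cite{DX21} does real work.
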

\begin{remark}
\label{monotonicity of MA measure}
	It follows the definition (\ref{calI-envelope}) of $\calI$-envelope that $u\leq  P[u]_{\calI}+C$ for some constant $C$.
	According to the monotonicity of Monge-Amp\`ere masses \cite[Theorem 1.1]{WN19}, one obtains that
	\begin{equation*}
		\int_Y \left(\cur_{(M,h_0e^{-P[\varphi]_\calI})}\right)^{n}
	\geq\int_Y\left(\cur_{(M,h_0e^{-\varphi})}\right)^n .
	\end{equation*}
\end{remark}
Take $(M,h_M)=(\det E, \det h)$ and $W=\calO_X$ in Theorem \ref{thm twist iso} and Theorem \ref{DX asymptotic}. If $\PP(E^*)$ is compact K\"ahler, then we obtain that
\begin{align*}
&\lim_{m\rightarrow\infty}\frac{1}{m^{n+r-1}}
h^0(X,S^m\calE\otimes\det \calE(S^mh\otimes\det h))     \\
=&\lim_{m\rightarrow\infty}\frac{1}{m^{n+r-1}}
h^0(\PP(E^*),K_{\PP(E^*)/X}\otimes L_E^{m+r}(h_L^{m+r}) )        \\
=&\frac{1}{(n+r-1)!}\int_{\PP(E^*)} \left(\cur_{(L_E,P[h_L]_\calI)}\right)^{n+r-1}.
\end{align*}
Thus, we get an asymptotic formula for holomorphic vector bundles,
\begin{theorem}
\label{Darvas-Xia type}
	Let $X$ be a compact K\"ahler  manifold and $(E,h)\geq_\Grif0$.
	Then we have
\begin{align*}
	&\lim_{m\rightarrow\infty}\frac{1}{m^{n+r-1}}
	h^0(X,S^m\calE\otimes\det \calE(S^mh\otimes\det h))  \\
	 =&\frac{1}{(n+r-1)!}\int_{\PP(E^*)} \left(\cur_{(L_E,P[h_L]_\calI)}\right)^{n+r-1}.
\end{align*}
\end{theorem}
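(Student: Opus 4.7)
The strategy is to convert the left-hand side to a cohomology group on the projectivized bundle via the Le Potier-type isomorphism of Theorem \ref{thm twist iso}, and then invoke Darvas--Xia's asymptotic formula (Theorem \ref{DX asymptotic}). Since $X$ is compact Kähler and $\pi:\PP(E^*)\to X$ is a projective bundle, $\PP(E^*)$ is itself compact Kähler, which is the setting required by Theorem \ref{DX asymptotic}.

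First I would apply Theorem \ref{thm twist iso} with $W=\calO_X$, $(M,h_M)=(\det E,\det h)$ and $q=0$. The two curvature hypotheses are trivially satisfied: by Proposition \ref{pro posi}, $(\det E,\det h)\ge_\Grif 0$, so $\cur_{(\det E,\det h)}\ge 0\ge -C\omega$; and in the second inequality the pullback terms $\pm\pi^*\cur_{(\det E,\det h)}$ cancel, leaving $\cur_{(L_E^{m+r},h_L^{m+r})}\ge 0$, which follows from $(L_E,h_L)\ge_\Grif 0$. Using the relative canonical formula $K_{\PP(E^*)/X}=L_E^{-r}\otimes\pi^*\det E$ and noting that $\pi^*\det h^*\otimes\pi^*\det h$ is the trivial metric on the trivial bundle, the resulting isomorphism reads
\begin{equation*}
  H^0(X,S^m\calE\otimes\det\calE(S^mh\otimes\det h))\simeq H^0\bigl(\PP(E^*),K_{\PP(E^*)/X}\otimes L_E^{m+r}\otimes\calI(h_L^{m+r})\bigr).
\end{equation*}

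Next I would fix a smooth reference metric $h_0$ on $L_E$ and write $h_L=h_0 e^{-\varphi}$ with $\varphi\in\psh(\PP(E^*),\theta)$ where $\theta=\cur_{(L_E,h_0)}$. Setting $k:=m+r$, taking $T:=K_{\PP(E^*)/X}$ (so $s=\rk T=1$), and observing that $\calI(h_L^{m+r})=\calI((m+r)\varphi)=\calI(k\varphi)$, Theorem \ref{DX asymptotic} applied on $\PP(E^*)$ (of dimension $n+r-1$) yields
\begin{equation*}
  \lim_{k\to\infty}\frac{1}{k^{n+r-1}}h^0\bigl(\PP(E^*),K_{\PP(E^*)/X}\otimes L_E^{k}\otimes\calI(k\varphi)\bigr)=\frac{1}{(n+r-1)!}\int_{\PP(E^*)}\bigl(\cur_{(L_E,P[h_L]_\calI)}\bigr)^{n+r-1}.
\end{equation*}
Since $(1+r/m)^{n+r-1}\to 1$, replacing the $k^{n+r-1}$ normalization by $m^{n+r-1}$ does not alter the limit, and combining with the isomorphism of the previous step yields the claim.

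There is no genuine obstacle; the theorem is a packaging of the Le Potier-type isomorphism and the Darvas--Xia asymptotic. The only mildly delicate points are (i) verifying that the two curvature hypotheses of Theorem \ref{thm twist iso} are automatic in this specific setup, precisely because the two occurrences of $\pi^*\cur_{(\det E,\det h)}$ cancel and the $\pi^*\det h$ factors multiply out to the trivial metric, and (ii) checking that the harmless shift $m\mapsto m+r$ is absorbed into lower-order asymptotics; both are routine.
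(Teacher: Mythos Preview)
Your proposal is correct and follows essentially the same approach as the paper: apply Theorem \ref{thm twist iso} with $W=\calO_X$ and $(M,h_M)=(\det E,\det h)$ to pass to $H^0(\PP(E^*),K_{\PP(E^*)/X}\otimes L_E^{m+r}\otimes\calI(h_L^{m+r}))$, then invoke Theorem \ref{DX asymptotic} on the compact K\"ahler manifold $\PP(E^*)$. Your writeup is in fact more careful than the paper's, explicitly verifying the curvature hypotheses of Theorem \ref{thm twist iso} and justifying the harmless shift $m\mapsto m+r$.
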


Lastly, We conclude this subsection by highlighting an
asymptotic inequality for Griffiths positive vector bundles which should be analogous to that of big line bundles.

\begin{theorem}
\label{thm nonvan}
	Let  $(E, h)\ge_\Grif 0$  be a holomorphic vector bundle of rank $r$ over a compact $($possibly non-K\"ahler$)$ manifold $X$ of dimension $n$. Then we have	\begin{align}\label{bbbb}
	\lim_{m\rightarrow\infty}\frac{1}{m^{n+r-1}}
	h^0(X,S^m\calE(S^mh))
	\geq\frac{1}{(n+r-1)!}\int_{\PP(E^*)} \left(\cur_{(L_E,h_L)}\right)^{n+r-1}.
\end{align}
\end{theorem}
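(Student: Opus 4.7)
The plan is to reduce the statement to the compact K\"ahler setting via a K\"ahler modification of $\PP(E^*)$, and then invoke the Darvas--Xia asymptotic formula (Theorem \ref{DX asymptotic}). If $\int_{\PP(E^*)}(\cur_{(L_E,h_L)})^{n+r-1}=0$ the inequality is trivial, so I would assume throughout that this top self-intersection is strictly positive.

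First, Proposition \ref{isom of sheaves} identifies
\[
H^0\bigl(X,S^m\calE(S^mh)\bigr)\;\simeq\;H^0\bigl(\PP(E^*),L_E^m\otimes\calI(h_L^{m+r}\otimes\pi^*\det h^*)\bigr).
\]
Since the local weight of $\pi^*\det h^*$ is the negative of a pulled-back psh function and therefore bounded above, one has $\calI(h_L^{m+r})\subseteq\calI(h_L^{m+r}\otimes\pi^*\det h^*)$, which yields
\[
h^0\bigl(X,S^m\calE(S^mh)\bigr)\;\geq\;h^0\bigl(\PP(E^*),L_E^m\otimes\calI(h_L^{m+r})\bigr).
\]
Next, under the positivity assumption, a quasi-equisingular approximation of $h_L$ combined with Bonavero's singular Morse inequality (Theorem \ref{Bonavero's singular H-M-Ineq}, which is valid on arbitrary compact complex manifolds) shows that $L_E$ is big on $\PP(E^*)$; hence $\PP(E^*)$ is Moishezon, and by Hironaka's desingularization (as in the proof of Lemma \ref{lem kahler mod}) it admits a K\"ahler modification $\mu\colon\tilde P\to\PP(E^*)$, biholomorphic outside an analytic subset $B\subsetneq\tilde P$.

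On the compact K\"ahler manifold $\tilde P$, I would apply Theorem \ref{DX asymptotic} with line bundle $M=\mu^*L_E$, semi-positive metric $\mu^*h_L$, twist $T=\mu^*L_E^{-r}$, and index $k=m+r$, obtaining
\[
\lim_{m\to\infty}\frac{1}{m^{n+r-1}}h^0\!\bigl(\tilde P,\mu^*L_E^m\otimes\calI(\mu^*h_L^{m+r})\bigr)\;=\;\frac{1}{(n+r-1)!}\int_{\tilde P}\bigl(\cur_{(\mu^*L_E,\,P[\mu^*\varphi_h]_\calI)}\bigr)^{n+r-1}.
\]
By Remark \ref{monotonicity of MA measure} this integral dominates $\int_{\tilde P}(\cur_{(\mu^*L_E,\mu^*h_L)})^{n+r-1}$, and by pulling a quasi-equisingular approximation of $h_L$ back from $\PP(E^*)$ to $\tilde P$ and using that $\mu$ has degree one on the isomorphism locus, the latter integral equals $\int_{\PP(E^*)}(\cur_{(L_E,h_L)})^{n+r-1}$.

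The final ingredient is the injection
\[
H^0\bigl(\tilde P,\mu^*L_E^m\otimes\calI(\mu^*h_L^{m+r})\bigr)\;\hookrightarrow\;H^0\bigl(\PP(E^*),L_E^m\otimes\calI(h_L^{m+r})\bigr),
\]
which chains all the inequalities together to yield (\ref{bbbb}). Given $\tilde s$ on $\tilde P$, restricting to $\tilde P\setminus B\simeq\PP(E^*)\setminus\mu(B)$ gives a section $s$ of $L_E^m$ there; the local identity $\int|s|^2_{h_L^{m+r}}\,dV_{\PP(E^*)}=\int|\tilde s|^2_{\mu^*h_L^{m+r}}|J_\mu|^2\,dV_{\tilde P}$, with $|J_\mu|^2$ smooth and therefore bounded on the compact $\tilde P$, shows that $s\in L^2_{\loc}(\PP(E^*))$ and so extends holomorphically across the analytic set $\mu(B)$ by Riemann-type $L^2$ extension. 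The step I expect to require the most care is this pushforward-plus-extension argument across the exceptional locus, together with verifying birational invariance of the top self-intersection in Cao's singular sense; the remaining ingredients are direct applications of results already established in the paper or short local computations.
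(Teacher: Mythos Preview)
Your proposal is correct and follows the same overall architecture as the paper: pass to a K\"ahler modification $\mu:\tilde P\to\PP(E^*)$, apply the Darvas--Xia asymptotic formula there, and descend via monotonicity of non-pluripolar masses and birational invariance of the Cao integral. The paper handles the reduction to positive top self-intersection, the existence of the K\"ahler modification, and the final Monge--Amp\`ere steps exactly as you outline.

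The one genuine difference is in how you compare $H^0$ on $\PP(E^*)$ and on $\tilde P$. The paper keeps the canonical twist $K_{\PP(E^*)/X}\otimes L_E^{m+r}\otimes\pi^*\det E^*$ and then invokes Lemma~\ref{lem isom under mod}, which gives an \emph{isomorphism} of cohomology groups but rests on the torsion-freeness of higher direct images (Theorem~\ref{torsionfree thm}) and hence on the Zhou--Zhu injectivity theorem. You instead work directly with $L_E^m$ and produce only an \emph{injection} $H^0(\tilde P,\mu^*L_E^m\otimes\calI(\mu^*h_L^{m+r}))\hookrightarrow H^0(\PP(E^*),L_E^m\otimes\calI(h_L^{m+r}))$ by the elementary $L^2$ pushforward-plus-Riemann-extension argument. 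Since only a lower bound is needed, your route is sufficient and more self-contained; the paper's route yields a stronger isomorphism statement but at the cost of heavier machinery. One small wording slip: in justifying $\calI(h_L^{m+r})\subseteq\calI(h_L^{m+r}\otimes\pi^*\det h^*)$, what you need is that $\pi^*\det h^*$ itself is locally bounded above (equivalently, its weight is bounded \emph{below}), which holds because $\det h=e^{-\chi}$ with $\chi$ psh and hence locally bounded above; your conclusion is correct but the phrase ``weight \ldots\ bounded above'' has the sign reversed.
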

\begin{proof}
	Without loss of generality, we may assume that
	\begin{equation*}\int_{\PP(E^*)}\left(\cur_{(L_E,h_L)}\right)^{n+r-1}>0.
 \end{equation*}
	Thus, Lemma \ref{lem kahler mod} implies that $\PP(E^*)$ admits a K\"ahler modification $\mu:\tilde{P}\to\PP(E^*).$
	Then using Lemma \ref{lem isom under mod}, we establish the relation
	\begin{align}\label{for **}
	&H^q(\PP(E^*),K_{\PP(E^*)}\otimes \pi^*\det E^*\otimes L_E^{m+r}(h_L^{m+r})) \nonumber \\
	\simeq&   H^q(\tilde{P},K_{\tilde{P}}\otimes\mu^*\pi^*\det E^* \otimes \mu^*L_E^{m+r}\otimes\calI(\mu^*h_L^{m+r})).
	\end{align}
	Additionally, it's apparent that
	\begin{equation*}
 \int_{\tilde{P}}\left(\cur_{(\mu^*L_E^{m+r}, \mu^*h_L^{m+r})}\right)^{n+r-1}>0.
 \end{equation*}
	Using Proposition \ref{isom of sheaves}, Theorem \ref{Darvas-Xia type} together with the isomorphism (\ref{for **}), we obtain that
\begin{align*}
	&\lim_{m\rightarrow\infty}\frac{1}{m^{n+r-1}}
	h^0(X, S^m\calE(S^mh))     \\
  =&\lim_{m\rightarrow\infty}\frac{1}{m^{n+r-1}}
  h^0(\PP(E^*),K_{\PP(E^*)/X}\otimes L_E^{m+r}\otimes\pi^*\det E^*(h_L^{m+r}\otimes\pi^*\det h^*) )   \\
  \geq&\lim_{m\rightarrow\infty}\frac{1}{m^{n+r-1}}
  h^0(\PP(E^*),K_{\PP(E^*)/X}\otimes\pi^*\det E^*\otimes L_E^{m+r}(h_L^{m+r}) )
  \\
  =&\lim_{m\rightarrow\infty}\frac{1}{m^{n+r-1}}  h^0(\tilde{P},K_{\tilde{P}/X}\otimes\mu^*\pi^*\det E^* \otimes \mu^*L_E^{m+r}\otimes\calI(\mu^*h_L^{m+r})) \\
  \geq&\frac{1}{(n+r-1)!}\int_{\tilde{P}}\left(\cur_{(\mu^*L_E, \mu^*h_L)}\right)^{n+r-1}\\
  =&\frac{1}{(n+r-1)!}\int_{{\PP(E^*)}}\left(\cur_{(L_E, h_L)}\right)^{n+r-1},
  \end{align*}
where  the last inequality is due to Theorem \ref{DX asymptotic} and Remark \ref{monotonicity of MA measure}.
\end{proof}
\begin{remark}
If $h$ is Griffiths semi-positive on $X$ and merely positive on an open neighborhood, then the integral on the right-hand side of $(\ref{bbbb})$ is positive, which means that $H^0(X,S^m\calE(S^mh))$ is not zero for $m>>1$.
\end{remark}

\begin{corollary}
  Let $(E, h)>_\Grif 0$  be a holomorphic vector bundle of rank $r$ over a compact Hermitian manifold $(X,\omega)$ of dimension $n$.
  Consider $\{x_1,\ldots,x_N\}$ be a set of finite points and  $s_j\ge 0$ for $j=1,\ldots N$. Then for $m>>1$, there exists a nonzero global section $u$ of $S^m E$ vanishing at  $x_j$  with order at least $s_j$, $j=1,\ldots,N$.
  \end{corollary}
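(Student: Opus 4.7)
The plan is to combine the asymptotic lower bound from Theorem \ref{thm nonvan} with an elementary codimension count in the space of global sections. All the heavy lifting has already been done in Theorem \ref{thm nonvan}; what remains is a careful dimension comparison together with a check that the leading coefficient is strictly positive under the Griffiths positive hypothesis.

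First, I would verify that the right-hand side of Theorem \ref{thm nonvan} is strictly positive for $(E,h) >_\Grif 0$. This is exactly the curvature observation already recorded just after Corollary \ref{coro vanisging for SNak>0}: Griffiths strict positivity of $h$ yields $\cur_{(L_E,h_L)} \geq a\pi^*\omega + b\omega_\FS$ with $a,b>0$ on a set of positive measure in $\PP(E^*)$, hence ${\rm nd}(L_E,h_L) = n+r-1$ and in particular
\[
\int_{\PP(E^*)} \left(\cur_{(L_E,h_L)}\right)^{n+r-1} > 0.
\]
Invoking Theorem \ref{thm nonvan} then produces a constant $c>0$ such that $h^0(X, S^m\calE(S^mh)) \geq c\, m^{n+r-1}$ for all sufficiently large $m$. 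Because $S^m\calE(S^mh)$ is a coherent subsheaf of $\calO(S^mE)$, the same lower bound is inherited by $h^0(X, S^mE)$.

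Next, I would bound from above the codimension inside $H^0(X,S^mE)$ of the linear subspace $V_m$ of sections vanishing at each $x_j$ to order at least $s_j$. Imposing vanishing to order $\geq s_j$ at $x_j$ is a linear condition with values in the finite-dimensional space $(S^mE)_{x_j} \otimes (\calO_{X,x_j}/\mathfrak{m}_{x_j}^{s_j})$, of complex dimension $\binom{m+r-1}{r-1}\cdot\binom{s_j+n-1}{n}$. Summing over $j=1,\dots,N$ and using $\binom{m+r-1}{r-1} = O(m^{r-1})$, the total codimension of $V_m$ in $H^0(X,S^mE)$ is $O(m^{r-1})$ as $m\to\infty$, with implicit constant depending only on $N$, the $s_j$'s, $n$ and $r$.

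Finally, since $n\geq 1$ we have $m^{n+r-1}/m^{r-1} = m^n \to \infty$, so for $m\gg 1$ the lower bound $c\,m^{n+r-1}$ on $\dim H^0(X,S^mE)$ strictly exceeds the upper bound on the codimension of $V_m$, forcing $V_m \neq 0$ and producing the desired section. The only subtle step is the first, where one must invoke Theorem \ref{thm nonvan} (whose proof already relies on the K\"ahler modification trick of Lemma \ref{lem kahler mod} and Darvas--Xia's asymptotic formula) and the positivity of the fiberwise curvature to secure a nonvanishing leading coefficient; the remainder is a routine linear-algebraic count.
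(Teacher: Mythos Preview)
Your proof is correct but follows a genuinely different route from the paper's. The paper instead modifies the metric: using a partition of unity it builds a function $\theta$ equal to $2(n+s_j)\log|z-x_j|$ near each $x_j$, observes that Griffiths positivity of $h$ allows one to absorb the negative curvature of $e^{-\theta/m_0}$ for some $m_0$ so that $he^{-\theta/m_0}$ remains Griffiths positive, and then applies Theorem~\ref{thm nonvan} to this new metric. Any nonzero section in $S^m\calE(S^m(he^{-\theta/m_0}))$ is then automatically $L^2$ against the weight $e^{-m\theta/m_0}$, which forces the prescribed vanishing orders by the standard integrability criterion.

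Your approach trades the analytic trick of singular weights for an elementary linear-algebra count: you use Theorem~\ref{thm nonvan} only once (on the original metric) to get $h^0(X,S^mE)\gtrsim m^{n+r-1}$, and then compare against the $O(m^{r-1})$ jet conditions. This is arguably cleaner and avoids having to verify that the perturbed metric stays Griffiths positive; it also makes transparent that the argument is really about growth rates. The paper's route, on the other hand, produces a section with an explicit $L^2$ bound against the singular weight $e^{-m\theta/m_0}$, which is slightly more information and fits the multiplier-ideal philosophy of the paper. Both rely on Theorem~\ref{thm nonvan} and the positivity of $\int_{\PP(E^*)}(\cur_{(L_E,h_L)})^{n+r-1}$ (the Remark after Theorem~\ref{thm nonvan}) as the essential input.
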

  \begin{proof}
    By partition of unit, there exists a function $\theta$ on $X$ such that $\theta$ is smooth outside  $\{x_1,\ldots,x_N\}$ and equals to $2(n+s_j)\log |z-x_j|$ near $x_j$ for each $j$. Since $h$ is Griffiths positive, by definition one can show that  there exists $m_0$ such that $e^{-\frac{1}{m_0}\theta} h$ is also Griffiths positive. Then for $m>m_0$ large enough, it follows from Theorem \ref{thm nonvan} that there is a  nonzero global section $u$ of $S^m E$ with
    \begin{equation*}
    \int_X |u|^2_{S^m h}e^{-\frac{m}{m_0}\theta}dV_\omega<+\infty.
    \end{equation*}
    As $h$ is locally bounded below by a continuous metric and $e^{-\theta}$ is equals to $\frac{1}{|z-x_j|^{2(n+s_j)}}$ near $x_j$ for each $j$, we obtain that $u$ vanishes at  $x_j$  with order at least $s_j$, $j=1,\ldots,N$, due to $m>m_0$.
  \end{proof}

\small
\noindent {\sc Department of Mathematics,
University of Maryland,
4176 Campus Dr,
College Park, MD 20742,USA; Tsinghua University,
	Department of Mathematics and Yau Mathematical Sciences Center,
	No 30, Shuangqing Road, Beijing 100084,
	China.}

{\tt yxliu238@umd.edu}\vspace{0.1in}

\noindent {\sc Beijing Institute of Mathematical Sciences and Applications, Beijing 101408, China; Department of Mathematics and Yau Mathematical Sciences Center, Tsinghua University, Beijing 100084, China.}

{\tt liuzhuo@amss.ac.cn}\vspace{0.1in}

\noindent {\sc School of Mathematical Sciences, Peking University;
Institute of Mathematics, Academy of Mathematics and Systems Science, Chinese Academy of Sciences, Beijing 100190, P. R. China
}

{\tt yanghui@amss.ac.cn}\vspace{0.1in}

\noindent {\sc Institute of Mathematics, Academy of Mathematics and Systems Science, Beijing 100190, P. R. China}

{\tt xyzhou@math.ac.cn}\vspace{0.1in}

\end{document}